\begin{document}

\begin{center}

  {\bf{\LARGE{A Dual Accelerated Method for Online Stochastic Distributed Averaging: From Consensus to Decentralized Policy Evaluation\footnote{This work has been submitted to the IEEE for possible publication. Copyright may be transferred without notice, after which this version may no longer be accessible.}}}}

\vspace*{.2in}

{\large{
\begin{tabular}{ccc}
Sheng Zhang$^{\star}$, Ashwin Pananjady$^{\star, \dagger}$ and Justin Romberg$^{\dagger}$
\end{tabular}
}}
\vspace*{.2in}

\begin{tabular}{c}
Schools of Industrial and Systems Engineering$^\star$ and Electrical and Computer Engineering$^\dagger$ \\
Georgia Institute of Technology
\end{tabular}

\vspace*{.2in}

\today 

\vspace*{.2in}

\begin{abstract}
Motivated by decentralized sensing and policy evaluation problems, we consider a particular type of distributed stochastic optimization problem over a network, called the online stochastic distributed averaging problem. We design a dual-based method for this distributed consensus problem with Polyak--Ruppert averaging and analyze its behavior. We show that the proposed algorithm attains an accelerated deterministic error depending optimally on the condition number of the network, and also that it has an order-optimal stochastic error. This improves on the guarantees of state-of-the-art distributed stochastic optimization algorithms when specialized to this setting, and yields---among other things---corollaries for decentralized policy evaluation. Our proofs rely on explicitly studying the evolution of several relevant linear systems, and may be of independent interest. Numerical experiments are provided, which validate our theoretical results and demonstrate that our approach outperforms existing methods in finite-sample scenarios on several natural network topologies. 
\end{abstract}
\end{center}


\section{Introduction}
\label{sec:introduction}
Consider an online, stochastic distributed averaging problem in which noisy data becomes available sequentially to agents residing on a network. Suppose the network is modeled as a simple, connected, and undirected graph $\mathcal{G} = \left(\mathcal{N}, \mathcal{E}\right)$ consisting of a set of nodes $\mathcal{N}=\{1,\ldots,N\}$ and a set of edges $\mathcal{E} \subseteq \mathcal{N} \times \mathcal{N}$. Here each edge $(i, j) \in \mathcal{E}$ should be thought of as an unordered pair of distinct nodes. The set of neighbors of node $i \in \mathcal{N}$ is denoted by $\mathcal{N}_i = \{j \in \mathcal{N} \;|\; (i, j) \in \mathcal{E}\}$. At every time step $t = 0, 1, 2, \ldots$, each node $i \in \mathcal{N}$ observes a local random vector $R_{i,t} \in \mathbb{R}^n$, with mean vector $\mathbb{E}[R_{i, t}] = \mu_i \in \mathbb{R}^n$ and covariance matrix $\text{Cov}\left[R_{i,t}\right] = \Sigma_{i} \in \mathbb{R}^{n \times n}$. Of particular interest will be the individual variances $\sigma^2_{i,1}, \ldots, \sigma^2_{i,n}$ of the $n$ components of $R_{i,t}$, which are also entries along the main diagonal of $\Sigma_i$. We assume that the local random vectors are generated independently across time and nodes. The goal is to iteratively estimate the average of the mean vectors $\bar{\mu}:=\frac{1}{N} \sum^{N}_{i=1} \mu_i$ at every node via a distributed algorithm in which the nodes can only communicate with their neighbors. We motivate this setting using the following two examples:
\paragraph{Distributed linear parameter estimation \citep{xiao2005scheme,kar2013distributed}}
Here we want to estimate a parameter vector $\beta^* \in \mathbb{R}^d$ using observations from a network $\mathcal{G} = \left(\mathcal{N}, \mathcal{E}\right)$ of $N$ sensors. At time $t \geq 0$, sensor $i \in \mathcal{N}$ makes an $m_i$-dimensional noisy measurement
\begin{align*}
Y_{i,t} = A_i \beta^* + \epsilon_{i,t},
\end{align*}
where $A_i$ is an $m_i\times d$ matrix known only to sensor $i$, and $\epsilon_{i,t} \in \mathbb{R}^{m_i}$ is a zero-mean noise vector that is independent across sensors and time, with covariance matrix $\Sigma_{\epsilon,i} \in \mathbb{R}^{m_i \times m_i}$. 

We work in the fully decentralized setting where each sensor can only exchange data with its neighbors; there is no central fusion center.  The goal is for each sensor to have an estimate of the unknown parameter $\beta^*$. To make the problem well-posed when $m_i<d$ for all $i \in \mathcal{N}$, we will assume that the matrix $\sum^N_{i=1} {A_i}^\top A_i$ is invertible (i.e. the ``distributed invertibility'' condition from \citep{kar2012distributed}). Since $\mu_i:= \mathbb{E}\left[A_i^\top Y_{i,t} \right] = A_i^\top A_i \beta^*$, we have
\begin{align*}
\beta^* 
&= \left(\frac{1}{N}\sum^N_{i=1} {A_i}^\top A_i\right)^{-1} \left(\frac{1}{N}\sum^N_{i=1} \mu_i \right).
\end{align*}
Thus, each sensor can form its own estimate of $\beta^*$ by estimating the global averages
\begin{align*}
\bar{A} := \frac{1}{N}\sum^N_{i=1} A_i^\top A_i \quad \text{and}\quad \bar{\mu}:=\frac{1}{N}\sum^N_{i=1} \mu_i,
\end{align*}
respectively. The problem of estimating $\bar{A}$ and $\bar{\mu}$ are covered by our setting, though we note that $\bar{A}$ is deterministic and can be estimated using standard distributed averaging algorithms. We will present finite-time bounds on how accurately $\beta^*$ can be approximated given each sensor's estimates of $\bar{A}$ and $\bar{\mu}$.

\paragraph{Decentralized multi-agent policy evaluation \citep{doan2019finite,zhang2021taming}}
A central problem in reinforcement learning (RL) is to estimate the value function of a given stationary policy in a Markov decision process, often referred to as the policy evaluation problem. Moreover, it is an important component of many policy optimization algorithms for RL. Because the policy is given and applied automatically to select actions, such a problem is naturally formulated as value function estimation in a Markov reward process (MRP). 

Here, $N$ agents operate in a common environment modeled by a finite MRP consisting of a set of states $\mathcal{S}=\{1,\cdots,n\}$, a state transition matrix $P \in [0,1]^{n \times n}$, rewards $r_i\in\mathbb{R}^n$ for agent $i \in \mathcal{N}$ being in each state, and a discount factor $\gamma \in [0, 1)$. We again work in the fully decentralized
setting where agents can only communicate with their neighbors on a network $\mathcal{G}=(\mathcal{N}, \mathcal{E})$. Their goal is to cooperatively estimate the value function $J^*: \mathcal{S} \rightarrow \mathbb{R}$ defined for all $s \in \mathcal{S}$ as 
\begin{align*}
J^*(s) := \mathbb{E} \left[\sum^\infty_{t=0} \gamma^t \bar{r}_{s_t} \Big| s_0 = s \right],
\end{align*}
where $s_{t+1} \sim P(s_t, \cdot)$ for all $t \geq 0$, and $\bar{r}_j$ is the $j$-th component of $\bar{r} := \frac{1}{N}\sum^N_{i=1} r_i$. It is known that $J^*$ solves the Bellman equation
\begin{align}
\label{eq:Bellman equation}
J = \bar{r} + \gamma P J,
\end{align}
meaning $J^* = \left(I - \gamma P \right)^{-1} \bar{r}$.

In the learning setting, the transition matrix $P$ and the local reward vectors $r_1, \ldots, r_N$ are unknown, and we instead assume access to a black box simulator. This observation model is often referred to as the \emph{generative model} \citep{kearns1999finite}: for each time step $t 
\geq 0$ and for each state $j \in \mathcal{S}$, each agent $i \in \mathcal{N}$ observes a common random next state $X_{t,j} \sim P(j, \cdot)$, and a local random reward $R_{i,t,j}$ with mean $r_{i,j}$ and variance $\sigma^{2}_{i,j}$. We assume that the local random rewards are generated independently across time and agents. A natural approach to this problem is to use the samples collected to construct estimates $\left(\widehat{P}, \widehat{r}\right)$ of the pair $\left(P, \bar{r}\right)$, and then plug these estimates into the Bellman equation \eqref{eq:Bellman equation}. The problem of estimating $\bar{r}$ in a decentralized manner is covered by our framework, and we will provide finite-sample bounds on how precisely $J^*$ can be approximated given each agent's estimates of the pair $\left(P, \bar{r}\right)$. 

\subsection{Related work}
There has been much recent interest in developing distributed algorithms for applications in robotics \citep{giordani2010distributed}, resource allocation \citep{xu2017distributed}, power system control \citep{peng2014distributed}, optimal transport \citep{krishnan2018distributed} and multi-agent reinforcement learning \citep{doan2019finite}. This is motivated mainly by the emergence of large-scale networks, where a huge amount of data is involved, and the generation and processing of information are not centralized. Notable among these are algorithms that can be used by a group of agents to reach a consensus in a distributed manner. The distributed consensus problem has been studied extensively in the computer science literature \citep{lynch1996distributed} and has found a number of applications including coordination of UAVs \citep{li2019distributed}, information processing in sensor networks \citep{zhang2018adaptive}, and distributed optimization \citep{tutunov2019distributed}.

The distributed averaging problem is a special case in which the goal is to compute the exact average of the initial values of the agents via a distributed algorithm. The most common distributed averaging algorithms are linear and iterative, which can be classified as deterministic or randomized. Several well-known deterministic distributed averaging algorithms were proposed and analyzed in the papers~\citep{tsitsiklis1984problems,tsitsiklis1986distributed,xiao2004fast,xiao2007distributed,olshevsky2009convergence}, where at each time step, every agent takes a weighted average of its own value with values received from its neighbors. For other deterministic algorithms, we refer the reader to \citet{olshevsky2009convergence} and the references therein. There are also two popular randomized algorithms, where at each time step, either two randomly selected nodes interchange information \citep{boyd2006randomized}, or a randomly selected node broadcasts its value to all its neighbors \citep{aysal2009broadcast}. For a discussion of other randomized algorithms, we refer the reader to \citet{dimakis2010gossip} and the references therein.

Many existing algorithms for distributed averaging require that agents are able to receive precise measurement values. However, constrained by limited sensing, agents might only be able to observe noisy measurements. Moreover, modern distributed systems involve a large amount of data available in sequential order. As each agent is subject to computation and storage constraints, it needs to process and distribute information received in an online fashion. Motivated by these considerations, in this paper, we study the natural online stochastic distributed averaging problem described above.

Our framework can be viewed as a special case of distributed stochastic optimization. The goal of distributed optimization is to minimize a global objective function given as a sum of local objective functions held by each agent, in a distributed manner. 
The distributed optimization problem has been studied for a long time and can be traced back to the seminal works \citep{tsitsiklis1984problems,tsitsiklis1986distributed} in the context of parallel and distributed computation. It has gained renewed prominence over the last decade due to its various applications in power systems \citep{molzahn2017survey}, communication networks \citep{li2018convergence}, machine learning \citep{nedic2020distributed}, and wireless sensor networks \citep{dougherty2016extremum}. Recent reviews can be found in the surveys \citep{nedic2018distributed,yang2019survey} and the books \citep{nedich2015convergence,giselsson2018large,bullo2020lectures}.

Distributed deterministic optimization is quite well understood with many centralized algorithms having their decentralized counterparts. For example, there exist distributed subgradient methods \citep{nedic2009distributed}, gradient methods \citep{shi2015extra}, and many variants of accelerated gradient methods \citep{scaman2017optimal,li2020variance,hendrikx2021optimal}, which achieve both communication and oracle complexity lower bounds. 

Optimal methods using a primal approach for smooth and strongly convex distributed stochastic optimization over networks were recently proposed and analyzed by \citet{fallah2019robust} and \citet{rogozin2021accelerated}. There are also methods using a dual approach \citep{gorbunov2019optimal,dvinskikh2019dual}, which are akin to the methods we develop and analyze in this paper (for the special class of quadratic functions). Sections \ref{sec:stochastic dual accelerated method} and \ref{sec:main results} provide detailed discussions of similarities and differences between our results and this body of work. In short, the dual approaches \citep{gorbunov2019optimal,dvinskikh2019dual} achieve optimal communication complexity in the general distributed optimization setting but fall short in terms of their oracle complexity.

\subsection{Contributions}
In this paper, we follow the dual approach of \citet{scaman2017optimal} and propose a stochastic dual accelerated method that uses constant step-size and Polyak–Ruppert averaging for the online stochastic distributed averaging problem. We establish non-asymptotic convergence guarantees with explicit dependence on the network connectivity parameter and noise in the observations. Our analysis builds on a discrete-time dynamical system representation of the algorithm and relies on explicitly studying the evolution of several relevant linear systems, which may be of independent interest. Our mean-squared error upper bounds provide tight guarantees on the bias and variance terms for the algorithm. We show that \begin{itemize}
    \item the bias term decays linearly at an accelerated rate with exponent $\mathcal{O}\left(- \frac{T}{\sqrt{\kappa(L)}}\right)$, where $\kappa(L)$ is a conditioning parameter quantifying the connectivity of the network and its precise definition is given in Section \ref{sec:stochastic dual accelerated method},
    \item the variance term achieves the $\mathcal{O}\left( \frac{\sum^n_{j=1}\max_{i \in \mathcal{N}} \sigma^2_{i,j}}{T}\right)$ rate up to a higher-order term in $T$, and
    \item the convergence rate of the algorithm is optimal.
\end{itemize}
Moreover, we show that our method outperforms, both in theory and in simulation, the state-of-the-art primal accelerated method called \texttt{D-MASG} \citep{fallah2019robust} in a relevant non-asymptotic regime where $T \asymp \sqrt{\kappa(L)}$. Furthermore, when assuming that $\sigma^2_{i,j} = \sigma^2_{i',j}$ for all $i\not = i' \in \mathcal{N}$ and $j \in \{1,\ldots,n\}$, and letting $\sigma^2 := \sum^n_{j=1} \sigma^2_{i,j}$, we show that our method has optimal per-node oracle complexity $\mathcal{O}\left(\frac{\sigma^2}{\varepsilon}\right)$ and optimal communication complexity $\mathcal{O}\left(\sqrt{\kappa(L)} \ln\left(\frac{1}{\varepsilon}\right)\right)$, where $\varepsilon > 0$ is the desired accuracy. In contrast, when specialized to our setting, state-of-the-art dual accelerated methods \citep{gorbunov2019optimal}, namely, \texttt{R-RRMA-AC-SA$^2$} and \texttt{SSTM\_sc}, have the same communication complexity $\mathcal{O}\left(\sqrt{\kappa(L)} \ln\left(\frac{1}{\varepsilon}\right)\right)$ as our algorithm, but can only be shown to have much larger per-node oracle complexity $\mathcal{O}\left(\kappa(L) \frac{\sigma^2}{\varepsilon^2}\right)$. Finally, numerical experiments are provided, which validate our theoretical results and demonstrate that our approach outperforms existing methods in finite-sample scenarios on several natural network architectures. 
\section{Stochastic Dual Accelerated Method}
\label{sec:stochastic dual accelerated method}
In this section, we first cast distributed averaging as distributed optimization with quadratic local objective functions, which is a well-known correspondence. Then, we follow \cite{scaman2017optimal,scaman2018optimal} and use the dual formulation of the distributed optimization problem to design an algorithm that can be executed for the online stochastic distributed averaging problem. 

First, note that the target vector $\bar{\mu}=\frac{1}{N} \sum^N_{i=1} \mu_i$ is the unique optimal solution of the optimization problem
\begin{align}
\label{eq: minimization problem single agent}
\min_{\theta \in \mathbb{R}^n}~ \frac{1}{2}\sum^N_{i=1} \Vert \theta - \mu_i\Vert^2_2.
\end{align}
 
A standard way for solving problem \eqref{eq: minimization problem single agent} in a decentralized setting is rewriting the problem as
\begin{align}
\label{eq:minimization problem rewrite}
\min_{\theta_1=\cdots=\theta_N}~ \frac{1}{2}\sum^N_{i=1} \Vert \theta_i - \mu_i\Vert^2_2.
\end{align}
The problem \eqref{eq:minimization problem rewrite} can be solved using the \emph{distributed stochastic gradient method} (\texttt{DSG}). This is a non-accelerated primal method that, at every iteration, performs a stochastic gradient descent update at each node with respect to its local objective function, and then performs weighted averaging with the decision variables from its neighbors. See Appendix \ref{sec:distributed stochastic gradient} for the formal description of \texttt{DSG} and its theoretical guarantees.

In this paper, we consider a dual approach for problem \eqref{eq:minimization problem rewrite}, which uses a decentralized communication scheme based on the gossip algorithm \citep{boyd2006randomized,duchi2011dual}. More specifically, during a communication step, each node $i$ broadcasts an $n$-dimensional vector to its neighbors and then computes a linear combination of the values received from its neighbors: node $i$ sends $\theta_i$ to its neighbors and receives $\sum_{j \in \mathcal{N}_i} L_{i,j} \theta_j$. One round of communication over the network can be represented as multiplying the current estimates with a gossip matrix $L= [L_{i,j}] \in \mathbb{R}^{N \times N}$. In order to encode communication constraints imposed by the network, we impose the following standard assumptions on $L$ \citep{scaman2017optimal,scaman2018optimal}:
\begin{itemize}
    \item[1.] $L$ is symmetric and positive semi-definite,
    \item[2.] $L$ is defined on the edges of the network: $L_{i,j} \not = 0$ only if $i=j$ or $(i,j) \in \mathcal{E}$,
    \item[3.] The kernel (i.e. nullspace) of $L$ is the set of all constant vectors.
\end{itemize} 
The third condition will ensure consensus among agents and also allow us to rewrite the consensus agreement constraint $\theta_1=\cdots=\theta_N$ in a convenient fashion. Note that a simple choice of the gossip matrix---which underlies our choice of notation---is the graph Laplacian matrix for $\mathcal{G}$, but other choices satisfying the above conditions are also valid. We will denote by $0 = \lambda_{N}(L) < \lambda_{N-1}(L) \leq \cdots \leq \lambda_{1}(L)$ the spectrum of $L$. Let $\kappa(L):=  \frac{\lambda_1(L)}{\lambda_{N-1}(L)}$ be the ratio between the largest and the second smallest eigenvalues of $L$. This quantity is the condition number of $L$ in the space orthogonal to the kernel of $L$, and is a parameter characterizing the connectivity of the network and how fast the information is spread over the network. Since $L$ is a real symmetric matrix, by the spectral theorem, it can be decomposed as $L = Q \Lambda Q^\top$, where $\Lambda := \text{diag}(\lambda_1(L), \ldots, \lambda_{N}(L))$ is a diagonal matrix whose entries are the eigenvalues of $L$ and $Q$ is an orthogonal matrix whose $i$-th column is the eigenvector of $L$ associated with $\lambda_i(L)$. Such a decomposition is not unique when the eigenvalues are not distinct; in this case, it suffices to choose any valid decomposition. 

We observe that the equality constraint $\theta_1=\cdots=\theta_N$ is equivalent to $\left(\sqrt{L} \otimes I_{n} \right) \Theta = 0$, where $\sqrt{L} := Q \sqrt{\Lambda} Q^\top$, $I_n$ is the $n \times n$ identity matrix, $\otimes$ is the the Kronecker product and $\Theta := \begin{bmatrix}
\theta_1^\top \cdots
\theta_N^\top
\end{bmatrix}^\top \in \mathbb{R}^{Nn}$. Here, we use $\sqrt{L}$ instead of $L$ because we will later square it via the change of variables. This observation leads to the following primal problem:
\begin{align}
\label{eq:primal problem distributed optimization}
\begin{aligned}
\min_{\Theta \in \mathbb{R}^{Nn}} \quad &
\frac{1}{2}\sum^N_{i=1} \Vert \theta_i - \mu_i\Vert^2_2\\
\text{s.t.} \quad & \left(\sqrt{L} \otimes I_{n} \right) \Theta = 0.\\
\end{aligned}
\end{align}
The Lagrangian function $\ell$ associated with problem \eqref{eq:primal problem distributed optimization} is given by
\begin{align*}
\ell(\Theta, \lambda) &=
\frac{1}{2}\sum^N_{i=1} \Vert \theta_i - \mu_i\Vert^2_2 - \lambda^\top \left[\left(\sqrt{L} \otimes I_{n} \right) \Theta\right] = \sum^N_{i=1} \left[\frac{1}{2}\Vert \theta_i - \mu_i\Vert^2_2 - {x_i}^\top \theta_i \right],\\
\end{align*}
where $\lambda \in \mathbb{R}^{Nn}$ is the Lagrange multiplier vector and $X:= \begin{bmatrix}
x_1^\top \cdots x_N^\top
\end{bmatrix}^\top = \left(\sqrt{L} \otimes I_{n} \right) \lambda$. Since strong duality holds, the convex program~\eqref{eq:primal problem distributed optimization} can be equivalently written in its dual form
\begin{align}
\label{dual problem: distributed average}
\max_{\lambda \in \mathbb{R}^{Nn}}\min_{\Theta \in \mathbb{R}^{Nn}} \ell(\Theta, \lambda)= -\min_{{\lambda \in \mathbb{R}^{Nn}}} \left\{\frac{1}{2} \lambda^\top \left(L \otimes I_{n} \right) \lambda + \left[\left(\sqrt{L} \otimes I_{n} \right) \mu \right]^\top \lambda \right \},
\end{align}
where $\mu := \begin{bmatrix}
\mu_1^\top \cdots \mu_N^\top 
\end{bmatrix}^\top$. To solve problem \eqref{dual problem: distributed average}, we may simply use gradient descent. Note that a gradient step with step-size $\eta >0 $ for problem \eqref{dual problem: distributed average} is $\lambda_{t+1}=\lambda_{t}-\eta \left(\sqrt{L} \otimes I_{n} \right)\left[ \left(\sqrt{L} \otimes I_{n} \right)\lambda_t + \mu \right]$,
and the change of variables $X_t= \left(\sqrt{L} \otimes I_{n} \right) \lambda_t$ yields the iteration
\begin{align}
\label{gradient step change of variable: distributed average}
X_{t+1}=X_{t}-\eta \left(L \otimes I_{n} \right) \left(X_t + \mu \right).
\end{align}

This update can be interpreted as gossiping the gradients of the local conjugate functions, since the convex conjugate of the local objective function $\frac{1}{2} \Vert \theta - \mu_i\Vert^2_2$ of node $i$ at $x \in \mathbb{R}^n$ is given by $\frac{1}{2}x^\top x + x^\top \mu_i$, and its gradient at $x_{i,t} \in \mathbb{R}^n$ is $x_{i,t} + \mu_i$, which is the $i$-th $n$-dimensional block of $X_t + \mu$. Since $\mu$ is unknown in the online stochastic distributed averaging problem, Eq.~\eqref{gradient step change of variable: distributed average} is not directly applicable. However, we have access to samples $\{R_{1,t}, \ldots, R_{N,t}\}$ at every time step $t \geq 0$. Thus, a natural way to obtain the stochastic version of iteration \eqref{gradient step change of variable: distributed average} is to replace $\mu$ with its unbiased estimator $\widehat{\mu}_t := \begin{bmatrix}
R_{1,t}^\top \cdots R_{N,t}^\top 
\end{bmatrix}^\top$.

\begin{algorithm}[ht]
\caption{Stochastic Dual Accelerated Algorithm (\texttt{SDA})}\label{alg:Stochastic Dual Accelerated Algorithm}
\begin{algorithmic}[1]
\State \textbf{Input:} number of iterations $T>0$, ``burn-in'' time $T_0 \in [0, T-1]$, gossip matrix $L\in\mathbb{R}^{N\times N}$, step-size $\eta > 0$ and momentum
parameter $\zeta \geq 0$.
\State \textbf{Initialization:} each agent $i \in \mathcal{N}$  initializes $x_{i,0} = y_{i,0} = \mathbf{0} \in \mathbb{R}^{n}$.
\For{$t=0,\cdots,T-1$}
\For{agent $i \in \mathcal{N}$}
\State observes a local random vector $R_{i,t}$ and executes the local update: \begin{equation*}
\label{eq:Stochastic Dual Accelerated Algorithm Stochastic Local Conjugate Gradient}
\begin{split}
\theta_{i,t} = x_{i,t} + R_{i,t}.
\end{split} 
\end{equation*}
\State exchanges $\theta_{i,t}$ with each agent $j \in \mathcal{N}_i$ and executes the local updates:
\begin{subequations}
\label{eq:Stochastic Dual Accelerated Algorithm NAG Step}
\begin{align*}
y_{i,t+1} &= x_{i,t} - \eta \sum_{j \in \mathcal{N}_i \cup \{i\}} L_{i,j} \theta_{j,t},\\
x_{i,t+1} &= y_{i,t+1} + \zeta \left(y_{i,t+1} - y_{i,t} \right).
\end{align*}
\end{subequations}
\EndFor
\EndFor
\State \textbf{Output:} $\widehat{\theta}_{i,T} := \frac{1}{T-T_0}\sum^{T-1}_{t=T_0}\theta_{i,t}$ for all $i \in \mathcal{N}$.
\end{algorithmic}
\end{algorithm}

While the above calculations provide transparent intuition on which to base algorithm design, our proposed algorithm is the \emph{stochastic dual accelerated method} (\texttt{SDA}) presented in Algorithm \ref{alg:Stochastic Dual Accelerated Algorithm}, which involves a more sophisticated (but still simple) iteration. In particular, it relies on Nesterov's accelerated gradient method \citep{nesterov2003introductory} with constant step-size, used in conjunction with Polyak--Ruppert averaging of the last $T - T_0$ iterates~\citep{polyak1992acceleration}. \texttt{SDA} is a stochastic variant of the single-step dual accelerated algorithm proposed and analyzed in \citet{scaman2017optimal}, which was developed for smooth and strongly convex distributed deterministic optimization. While both algorithms are similar in spirit, the analysis of \texttt{SDA} uses completely different techniques, since it applies to the stochastic setting for a special class of quadratic functions, as opposed to the deterministic setting for general smooth and strongly convex functions. The analysis in this paper builds on a dynamical system representation of the algorithm and relies on explicitly studying the evolution of several relevant linear systems, and may be of independent interest.
\section{Main Results}
\label{sec:main results}
In this section, we begin by stating our theorem regarding the performance of \texttt{SDA}, and discussing some of the consequences of this result. Then we provide corollaries of the theorem for the two examples introduced in Section \ref{sec:introduction}. In order to state our theorem, we require the following definition: 

\begin{definition}
\label{definition:k^*}
$k^*$ is the smallest positive integer such that for all integer $k \geq k^*$:
\begin{align}
\label{eq:k^* definition}
\left(1+\frac{k}{\sqrt{\kappa(L)}+1}\right)\left(1 - \frac{1}{\sqrt{\kappa(L)}}\right)^{k} \leq \left(1 - \frac{1}{2\sqrt{\kappa(L)}}\right)^{k}. 
\end{align}
\end{definition}

\noindent Note that $k^*$ is well-defined and there exists an absolute constant $C \geq 1$ such that $k^* \leq C \cdot \sqrt{\kappa(L)}$ (see Lemma \ref{lemma:k^* upper bound} and its proof). Now, we are ready to present the finite-time performance bound of \texttt{SDA} in the following theorem.

\begin{theorem}
\label{theorem:upper bound of stochastic dual accelerated method}
Consider running \texttt{SDA} with the following parameters: 
\begin{align*}
T_0 = \frac{T}{2} \geq k^*, \quad \eta = \frac{1}{\lambda_1(L)} ~\text{ and }~  \zeta = \frac{\sqrt{\kappa(L)} - 1}{\sqrt{\kappa(L)} + 1},
\end{align*}
where $k^*$ is defined according to Definition \ref{definition:k^*}. Let $\left\{\widehat{\theta}_{i,T}\right\}_{i \in \mathcal{N}}$ be generated by \texttt{SDA}. Then we have
\begin{align}
\label{eq:finite-time upper bound of stochastic dual accelerated method}
\begin{split}
\mathbb{E}\left[\sum^N_{i=1} \norm{\widehat{\theta}_{i,T} - \bar{\mu}}^2_2\right] &\leq \frac{16\kappa(L)}{T^2} e^{-\frac{T}{2\sqrt{\kappa(L)}}} \sum^N_{i=1}\norm{\mu_i - \bar{\mu}}^2_2 \\
&\quad+ \frac{24 \left( k^* + \sqrt{\kappa(L)} \right) \sum^{N}_{i=1} \sum^n_{j=1} \sigma^2_{i,j}}{T^2} + \frac{2\sum^n_{j=1}\max_{i \in \mathcal{N}} \sigma^2_{i,j}}{T}.\\
\end{split}
\end{align}
\end{theorem}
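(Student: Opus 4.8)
The plan is to recast \texttt{SDA} as a linear time-invariant system driven by the centered observations, decouple it in the eigenbasis of $L$, and then run a bias--variance argument on the Polyak--Ruppert average. Stacking the iterates into $X_t,Y_t,\Theta_t\in\mathbb{R}^{Nn}$, writing $W:=L\otimes I_n$ and $\widehat\mu_t:=[R_{1,t}^\top\,\cdots\,R_{N,t}^\top]^\top$, Algorithm~\ref{alg:Stochastic Dual Accelerated Algorithm} becomes the recursion $Z_{t+1}=AZ_t+B\widehat\mu_t$ in the state $Z_t=(X_t^\top,Y_t^\top)^\top$, with output $\Theta_t=(I\;\,0)Z_t+\widehat\mu_t$, where $A$ has block rows $[(1+\zeta)(I-\eta W),\,-\zeta I]$ and $[I-\eta W,\,0]$ and $B$ stacks $-(1+\zeta)\eta W$ over $-\eta W$. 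Conjugating by $Q\otimes I_n$ block-diagonalizes everything into $N$ decoupled subsystems indexed by the eigenvalues of $L$: the $k$-th has $2\times2$ state matrix $A_k$ with block rows $[(1+\zeta)(1-\nu_k),\,-\zeta]$ and $[1-\nu_k,\,0]$ and input vector $B_k$ stacking $-(1+\zeta)\nu_k$ over $-\nu_k$, where $\nu_k:=\lambda_k(L)/\lambda_1(L)\in\{0\}\cup[\kappa(L)^{-1},1]$; in particular $B_N=0$. Writing $\widehat\mu_t=\mu+\xi_t$ with $\mathbb{E}\xi_t=0$ and $\mathrm{Cov}(\xi_t)=\bm{\Sigma}:=\mathrm{diag}(\Sigma_1,\dots,\Sigma_N)$, linearity splits $\Theta_t=\Theta_t^{\mathrm{det}}+\Theta_t^{\mathrm{noise}}$, so that $\sum_i\mathbb{E}\|\widehat\theta_{i,T}-\bar\mu\|_2^2$ decomposes as a squared bias $\big\|\tfrac1{T-T_0}\sum_{t=T_0}^{T-1}(\Theta_t^{\mathrm{det}}-\bar\mu\mathbf 1)\big\|_2^2$ plus a variance $\mathbb{E}\big\|\tfrac1{T-T_0}\sum_{t=T_0}^{T-1}\Theta_t^{\mathrm{noise}}\big\|_2^2$.

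For the bias, the conservation law $\sum_i x_{i,t}\equiv 0$ (valid since $(\mathbf 1^\top\otimes I_n)W=0$) shows $\Theta_t^{\mathrm{det}}\to\bar\mu\mathbf 1$, equivalently $X_t^{\mathrm{det}}\to X_\infty:=\bar\mu\mathbf 1-\mu$, so the squared bias equals $\big\|\tfrac1{T-T_0}\sum_{t=T_0}^{T-1}(X_t^{\mathrm{det}}-X_\infty)\big\|_2^2$. The accelerated-gradient contraction estimate for the matrices $A_k$ -- whose precise polynomial-times-geometric form $\big(1+\tfrac{t}{\sqrt{\kappa(L)}+1}\big)\big(1-\tfrac1{\sqrt{\kappa(L)}}\big)^{t}$ is exactly what Definition~\ref{definition:k^*} is built to absorb -- gives $\|X_t^{\mathrm{det}}-X_\infty\|_2\le\big(1-\tfrac1{2\sqrt{\kappa(L)}}\big)^{t}\|X_0^{\mathrm{det}}-X_\infty\|_2$ for $t\ge T_0\ge k^*$; then the triangle inequality, a geometric sum, $\|X_0^{\mathrm{det}}-X_\infty\|_2^2=\sum_i\|\mu_i-\bar\mu\|_2^2$, $T_0=T/2$, and $(1-\tfrac1{2\sqrt{\kappa(L)}})^{T}\le e^{-T/(2\sqrt{\kappa(L)})}$ yield the first term of~\eqref{eq:finite-time upper bound of stochastic dual accelerated method}.

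For the variance, collecting the coefficient of each $\xi_s$ writes $\tfrac1{T-T_0}\sum_{t=T_0}^{T-1}\Theta_t^{\mathrm{noise}}=\sum_{s=0}^{T-1}C_s\xi_s$, where $C_s=\sum_k c_{s,k}(q_kq_k^\top\otimes I_n)$ is a matrix polynomial in $A$ with scalar coefficients $c_{s,k}$; independence of the $\xi_s$ and orthonormality of the eigenvectors $q_k$ then give $\mathbb{E}\|\cdot\|_2^2=\sum_i\big(\sum_j\sigma^2_{i,j}\big)\sum_k\big(\sum_s c_{s,k}^2\big)(q_k)_i^2$. For the consensus mode $k=N$ one has $c_{s,N}=\mathbb{I}\{s\ge T_0\}/(T-T_0)$ (because $B_N=0$) and $(q_N)_i^2=1/N$, contributing $\tfrac1{N(T-T_0)}\sum_{i,j}\sigma^2_{i,j}\le\tfrac2T\sum_j\max_i\sigma^2_{i,j}$ -- the last term of~\eqref{eq:finite-time upper bound of stochastic dual accelerated method}. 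For $k\ne N$, an explicit $2\times2$ computation shows $\det(I-A_k)=\nu_k$ and that the first coordinate of $(I-A_k)^{-1}B_k$ equals $-1$, so the partial sums $S_m:=\sum_{u=0}^m(A_k^uB_k)_1$ converge to $-1$, and each $c_{s,k}$ is $1/(T-T_0)$ times a signed sum of at most two terms of the form $S_m+1$; a second linear-systems lemma -- again built on the same contraction estimate and Definition~\ref{definition:k^*} -- bounds $|S_m+1|$ by an absolute constant for all $m$ and geometrically once $m\ge k^*$, uniformly over $k\ne N$. Squaring and summing over $s$, the mass of $\sum_s c_{s,k}^2$ localizes in boundary layers of width $O(k^*+\sqrt{\kappa(L)})$ near $s=T_0$ and $s=T$, the interior being exponentially small, so $\sum_s c_{s,k}^2=O\big((k^*+\sqrt{\kappa(L)})/T^2\big)$; bounding $\sum_{k\ne N}(q_k)_i^2\le1$ this gives the middle term of~\eqref{eq:finite-time upper bound of stochastic dual accelerated method}, and assembling the three pieces (with $T_0=T/2$) while tracking constants completes the proof.

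I expect the main obstacle to be the two linear-systems lemmas, and in particular obtaining the contraction estimate for the $2\times2$ matrices $A_k$ \emph{uniformly in $\nu_k$} over the full range $[\kappa(L)^{-1},1]$: the subsystem passes through critical damping at $\nu_k=\kappa(L)^{-1}$, where $A_k$ has the repeated eigenvalue $1-\kappa(L)^{-1/2}$ and is a genuine Jordan block, so a naive diagonalization bound diverges; instead one must work from the exact Chebyshev-type formula for $A_k^u$ together with $|\sin(u\omega_k)/\sin\omega_k|\le\min\{u,\,1/|\sin\omega_k|\}$, and squeeze the polynomial prefactor into exactly the form $\big(1+\tfrac{t}{\sqrt{\kappa(L)}+1}\big)\big(1-\kappa(L)^{-1/2}\big)^t$ so that Definition~\ref{definition:k^*} applies verbatim. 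Propagating that prefactor through the partial sums $S_m$, and verifying that the boundary-layer contributions to $\sum_s c_{s,k}^2$ are genuinely $O((k^*+\sqrt{\kappa(L)})/(T-T_0)^2)$ rather than a power worse, is where essentially all of the remaining work lies.
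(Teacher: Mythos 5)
Your proposal is correct and follows essentially the same route as the paper's proof: the same linear-system representation of \texttt{SDA}, spectral decoupling into $2\times 2$ modes of $L$, a bias--variance split of the Polyak--Ruppert average, the same polynomial-times-geometric contraction bound (with the same three-regime analysis, including the Jordan block at critical damping and the Chebyshev/trigonometric formula with $\lvert\sin(u\omega)/\sin\omega\rvert\le u$) absorbed via Definition \ref{definition:k^*}, and the same $k^*+\sqrt{\kappa(L)}$ tail estimate, with your quantity $S_m+1$ being exactly the paper's $[A(\lambda_i(L))^{m+1}]_{1,1}$. The only differences are cosmetic---you use the $(X_t,Y_t)$ state realization and per-sample noise coefficients $c_{s,k}$ with exact consensus-mode weights $1/N$, where the paper uses the $(U_t,V_t)$ realization, the $\Delta_2/\Delta_3$ split, and von Neumann's trace inequality---and these lead to the same bounds.
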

\noindent See Section \ref{sec:proof of theorem 1} for a proof. It is worth making a few comments on this theorem. To simplify the discussion, we assume that $\sigma^2_{i,j} = \sigma^2_{i',j}$ for all $i \not = i' \in \mathcal{N}$ and $j \in \{1,\ldots,n\}$, and let $\sigma^2:=\sum^n_{j=1} \sigma^2_{i,j}$. Such an assumption is similar to the standard assumption in distributed stochastic optimization that the stochastic first-order oracle has finite variance bounded by $\sigma^2$, i.e., the variance of the stochastic (dual) gradient is bounded by $\sigma^2$.
\begin{remark}
Let us interpret the terms appearing in the bound \eqref{eq:finite-time upper bound of stochastic dual accelerated method}. Since $k^* \leq C \cdot \sqrt{\kappa(L)}$ for some absolute constant $C \geq 1$, the upper bound \eqref{eq:finite-time upper bound of stochastic dual accelerated method} simplifies to
\begin{align}
\label{eq:finite-time upper bound of stochastic dual accelerated method big-O}
\mathbb{E}\left[\sum^N_{i=1} \norm{\widehat{\theta}_{i,T} - \bar{\mu}}^2_2\right] &= \underbrace{\mathcal{O}\left(  e^{-\frac{T}{\sqrt{\kappa(L)}}}\sum^N_{i=1}\norm{\mu_i - \bar{\mu}}^2_2 \right)}_{\text{``bias''}} + \underbrace{\mathcal{O}\left(\frac{N \sqrt{\kappa(L)} \sigma^2}{T^2} + \frac{\sigma^2}{T}\right)}_{\text{``variance''}}.
\end{align}
This bound is presented in terms of two components: a bias term which is deterministic and independent of the noise level, and a variance term that measures the effect of noise on the algorithm. Note that our proposed method achieves an accelerated $\mathcal{O}\left(-\frac{T}{\sqrt{\kappa(L)}}\right)$ linear decay rate in the bias term---in the sense that it depends on $\sqrt{\kappa(L)}$ rather than $\kappa(L)$---as well as an $\mathcal{O}\left(\frac{\sigma^2}{T} \right)$ decay rate in the variance term up to the higher-order term in $T$. In fact, we can see from the variance term that the higher-order term in $T$, i.e. $\mathcal{O}\left(\frac{N \sqrt{\kappa(L)} \sigma^2}{T^2}\right)$, is dominated by $\frac{\sigma^2}{T}$ provided $T \gtrsim N \sqrt{\kappa(L)}$.
\end{remark}

\begin{remark}
We argue that the convergence rate of \texttt{SDA} is optimal. We first consider the noiseless setting where $\sigma^2 = 0$. It follows from Eq.~\eqref{eq:finite-time upper bound of stochastic dual accelerated method big-O} that our algorithm has a linear convergence rate $\mathcal{O}\left(-\frac{T}{\sqrt{\kappa(L)}}\right)$. The proof of Theorem 2 in \citet{scaman2017optimal} implies that there exist a gossip matrix $L$ and local functions $f_i$ in the special class of quadratic functions considered in Section \ref{sec:stochastic dual accelerated method} such that for any black-box distributed optimization algorithm using $L$, the convergence rate is at least $\Omega\left(e^{-\frac{T}{\sqrt{\kappa(L)}}}\right)$. Thus, the bias term achieves the optimal rate. Next, we consider the noisy setting where $\sigma^2 > 0$. Suppose the network is fully connected (or the graph is complete), then every node can be viewed as the center node that receives information from every other node and thus the distributed setting is reduced to the centralized setting for every node. Standard information-theoretic lower bounds on estimating Gaussian means then yield a lower bound $\Omega\left(\frac{\sigma^2}{T}\right)$ (see also classical results due to~\citet{nemirovskij1983problem}). Therefore, the rate of the variance term is optimal up to the higher-order term in $T$.
\end{remark}

\begin{remark}
The primal accelerated method named \texttt{D-MASG} \citep{fallah2019robust} is known to be asymptotically optimal for smooth and strongly convex distributed stochastic optimization problems~\citep{gorbunov2020recent}. If we apply Corollary 18 of \citet{fallah2019robust} to our setting, we obtain that the estimates $\{\widehat{\mu}_{i,T} \}_{i = 1}^N$ generated by this algorithm satisfy
\begin{align*}
\mathbb{E}\left[\sum^N_{i=1} \norm{\widehat{\mu}_{i,T} - \bar{\mu}}^2_2\right] = \mathcal{O}\left(e^{-\frac{T}{\sqrt{\kappa(L)}}} + \frac{N \kappa(L)^2 \sigma^2}{T^4} + \frac{\sigma^2}{T}\right).
\end{align*}
While the primal approach \texttt{D-MASG} and the dual approach \texttt{SDA} are rate-optimal for the online stochastic distributed averaging problem, their non-asymptotic behaviors can be significantly different. For example, in the non-asymptotic regime where $T \asymp \sqrt{\kappa(L)}$---which is the relevant regime given the accelerated rate of deterministic error---the upper bound on our algorithm scales as $\mathcal{O}\left(\frac{N\sigma^2}{T}\right)$, which is much better than the upper bound $\mathcal{O}\left(N\sigma^2\right)$ known to be achieved by \texttt{D-MASG}. 
\end{remark}

\begin{remark}
It is useful to compare with dual accelerated methods for smooth and strongly convex distributed stochastic optimization, \texttt{R-RRMA-AC-SA$^2$} and \texttt{SSTM\_sc} \citep{gorbunov2019optimal}. Applying Corollary 5.8 (for \texttt{R-RRMA-AC-SA$^2$}) and Corollary 5.14 (for \texttt{SSTM\_sc}) from \citet{gorbunov2019optimal} to our setting, the oracle complexity (the number of oracle calls per node) and communication complexity (the number of communication rounds) for both methods are $\mathcal{O}\left(\kappa(L) \frac{\sigma^2}{\varepsilon^2}\right)$ and $\mathcal{O}\left(\sqrt{\kappa(L)} \ln\left(\frac{1}{\varepsilon}\right)\right)$, where $\varepsilon > 0$ is the desired accuracy. Since these methods use batched stochastic dual gradients, let us make a small modification to \texttt{SDA} so as to facilitate a fair comparison. Specifically, suppose we change line 5 of \texttt{SDA} to: Each agent $i$ observes a batch local random vectors $\{R_{i, t, l}\}^{m_t}_{l=1}$ of size $m_t$, and executes local update
\begin{align}
\label{eq:Stochastic Dual Accelerated Algorithm Stochastic Local Conjugate Gradient batch}
\theta_{i,t} = x_{i,t} + \frac{1}{m_t} \sum^{m_t}_{l=1} R_{i,t, l}.
\end{align}
Under this modification, if we set $T=\mathcal{O}\left(\sqrt{\kappa(L)} \ln\left(\frac{1}{\varepsilon}\right)\right)$ and $m_t = \mathcal{O}\left(\frac{\sigma^2}{\varepsilon \sqrt{\kappa(L)}\ln\left(\frac{1}{\varepsilon}\right)}\right)$ for all $t=0, \ldots, T-1$, then Theorem \ref{theorem:upper bound of stochastic dual accelerated method} implies that $\mathbb{E}\left[\sum^N_{i=1} \norm{\widehat{\theta}^i_T - \bar{\mu}}^2_2\right] \leq \mathcal{O}\left(\varepsilon \right)$. Therefore, the batched version of our method has oracle complexity $\mathcal{O}\left(\frac{\sigma^2}{\varepsilon}\right)$ and communication complexity $\mathcal{O}\left(\sqrt{\kappa(L)} \ln\left(\frac{1}{\varepsilon}\right)\right)$.
While our method and the ones above have the same communication complexity, the oracle complexity of our method is much smaller since (i) it is independent of the condition number $\kappa(L)$ of the network; (ii) its dependence on $\varepsilon$ is $\frac{1}{\epsilon}$ instead of $\frac{1}{\epsilon^2}$. It is also worth noting that the batched version of our method achieves the oracle complexity lower bound $\Omega \left(\frac{\sigma^2}{\varepsilon}\right)$ and the communication complexity lower bound $\Omega \left(\sqrt{\kappa(L)} \ln\left(\frac{1}{\epsilon}\right)\right)$ simultaneously for our specific class of distributed stochastic optimization problems.  
\end{remark}

Next, we provide corollaries of Theorem \ref{theorem:upper bound of stochastic dual accelerated method} for the two examples introduced in Section~\ref{sec:introduction}.

\paragraph{Guarantees for distributed linear parameter estimation.}
We first cover how to generate an estimate of $\beta^* = \bar{A}^{-1} \bar{\mu}$ at each sensor in this setting by estimating the global averages $\bar{A}=\frac{1}{N}\sum^N_{i=1} A_i^\top A_i$ and $\bar{\mu}=\frac{1}{N}\sum^N_{i=1} \mu_i$, respectively. We first run \texttt{SDA} with local variables $R_{i,t} = {A_i}^\top A_i$ for all $t=0,\ldots,T' - 1$ and $i \in \mathcal{N}$ to obtain the estimates $\{\widehat{A}_{i,T'}\}_{i \in \mathcal{N}}$ of $\bar{A}$. Since $R_{i,t}$ is deterministic, iterate averaging is not necessary for the algorithm to converge. So we let \texttt{SDA} output the final iterates. Applying Theorem \ref{theorem:upper bound of stochastic dual accelerated method}, we obtain the linear convergence bound
\begin{align}
\label{eq:upper bound inverse of a matrix for corollary 1 (1)}
\sum^N_{i=1} \norm{\widehat{A}_{i,T'} - \bar{A}}^2_2 \leq \mathcal{O}\left(e^{-\frac{T'}{\sqrt{\kappa(L)}}}\sum^N_{i=1}\norm{A_i^\top A_i - \bar{A}}^2_2\right).
\end{align}
We assume that $T'$ is chosen large enough such that $\norm{\bar{A}^{-1}}_2 \norm{\widehat{A}_{i,T'} - \bar{A}}_2 \leq \frac{1}{2}$ for all $i \in \mathcal{N}$. Consequently, $\widehat{A}_{i,T'}$ is invertible for all $i \in \mathcal{N}$. Indeed, since $\bar{A}$ is assumed to be invertible, we have $\widehat{A}_{i,T'} = \bar{A} \left[I + \bar{A}^{-1}\left(\widehat{A}_{i,T'} - \bar{A}\right) \right]$, which implies that the invertibility of $\widehat{A}_{i,T'}$ is equivalent to the invertibility of $I + \bar{A}^{-1}\left(\widehat{A}_{i,T'} - \bar{A}\right)$. Since $\norm{\bar{A}^{-1} \left(\widehat{A}_{i,T'} - \bar{A}\right)}_2  \leq \norm{\bar{A}^{-1}}_2 \norm{\widehat{A}_{i,T'} - \bar{A}}_2 \leq \frac{1}{2}$, the matrix $I + \bar{A}^{-1}\left(\widehat{A}_{i,T'} - \bar{A}\right)$ has strictly positive eigenvalues and thus is invertible.

Next, we run \texttt{SDA} with local random variables $R_{i,t} = A_i^\top Y_{i,t}$ for all $t=0,\ldots,T-1$ and $i \in \mathcal{N}$ to obtain the estimates $\{\widehat{\mu}_{i,T}\}_{i \in \mathcal{N}}$ of $\bar{\mu}$. Noting that $R_{i,t}$ has zero mean and covariance matrix $A_i^\top \Sigma_{\varepsilon,i} A_i$, the individual variance $\sigma^2_{i,j}$ is the $j$-th element on the principal diagonal of $A_i^\top \Sigma_{\varepsilon,i} A_i$. Finally, each sensor $i$ computes its own estimate $\widehat{\beta}_{i, T, T'} := \left(\widehat{A}_{i,T'}\right)^{-1} \widehat{\mu}_{i,T}$ of the unknown parameter $\beta^*$. The next corollary states a finite-time error bound for this distributed sensing problem; see Section \ref{sec:proof of corollary 1} for a proof.

\begin{corollary}
\label{corollary:example I}
Consider running \texttt{SDA} with the following parameters: 
\begin{align*}
T_0 = \frac{T}{2} \geq k^*, \quad \eta = \frac{1}{\lambda_1(L)} ~\text{ and }~  \zeta = \frac{\sqrt{\kappa(L)} - 1}{\sqrt{\kappa(L)} + 1},
\end{align*}
where $k^*$ is defined according to Definition \ref{definition:k^*}. Let $\{\widehat{A}^i_T\}_{i \in \mathcal{N}}$ and $\{\widehat{\mu}^i_T\}_{i \in \mathcal{N}}$ be generated by \texttt{SDA} as described above, and $\widehat{\beta}_{i,T, T'} = \left(\widehat{A}_{i,T'}\right)^{-1} \widehat{\mu}_{i,T}$ denote the parameter estimated by sensor $i$. Then there exists a universal constant $C_1 > 0$ such that
\begin{align}
\label{eq:example I upper bound}
\begin{split}
&\mathbb{E}\left[\sum^N_{i=1} \norm{\widehat{\beta}_{i,T,T'}- \beta^*}^2_2\right] \leq C_1 \left\{\underbrace{\norm{\bar{\mu}}^2_2 \norm{\bar{A}^{-1}}^4_2 \left( e^{-\frac{T'}{\sqrt{\kappa(L)}}} \sum^N_{i=1}\norm{A_i^\top A_i - \bar{A}}^2_2\right)}_{\textbf{T}_1} \right.\\
&\left. + \underbrace{\norm{\bar{A}^{-1}}^2_2  \left( e^{-\frac{T}{\sqrt{\kappa(L)}}} \sum^N_{i=1}\norm{\mu_i - \bar{\mu}}^2_2 + \frac{\sqrt{\kappa(L)} \sum^{N}_{i=1} \sum^n_{j=1} \sigma^2_{i,j}}{T^2} + \frac{\sum^n_{j=1}\max_{i \in \mathcal{N}}\sigma^2_{i,j}}{T}\right)}_{\mathbf{\textbf{T}_2}}\right\},
\end{split}
\end{align}
\end{corollary}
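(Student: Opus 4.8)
The plan is to decompose the per-sensor error $\widehat{\beta}_{i,T,T'} - \beta^*$ into two contributions—one coming from the error in estimating $\bar{A}$ and one from the error in estimating $\bar{\mu}$—and then bound each using the triangle inequality together with Theorem~\ref{theorem:upper bound of stochastic dual accelerated method}. Concretely, write
\[
\widehat{\beta}_{i,T,T'} - \beta^*
= \bigl(\widehat{A}_{i,T'}\bigr)^{-1}\widehat{\mu}_{i,T} - \bar{A}^{-1}\bar{\mu}
= \bigl(\widehat{A}_{i,T'}\bigr)^{-1}\bigl(\widehat{\mu}_{i,T} - \bar{\mu}\bigr)
+ \Bigl[\bigl(\widehat{A}_{i,T'}\bigr)^{-1} - \bar{A}^{-1}\Bigr]\bar{\mu},
\]
so that $\norm{\widehat{\beta}_{i,T,T'}-\beta^*}^2_2 \leq 2\norm{(\widehat{A}_{i,T'})^{-1}}^2_2\norm{\widehat{\mu}_{i,T}-\bar\mu}^2_2 + 2\norm{(\widehat{A}_{i,T'})^{-1}-\bar A^{-1}}^2_2\norm{\bar\mu}^2_2$. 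The first term is handled by applying Theorem~\ref{theorem:upper bound of stochastic dual accelerated method} directly with $R_{i,t}=A_i^\top Y_{i,t}$ (whose variances are the diagonal entries of $A_i^\top\Sigma_{\varepsilon,i}A_i$, as noted), after controlling $\norm{(\widehat{A}_{i,T'})^{-1}}_2$ uniformly; this yields the deterministic bias term, the $\sqrt{\kappa(L)}/T^2$ term, and the $1/T$ term inside $\mathbf{T}_2$, each multiplied by $\norm{\bar A^{-1}}^2_2$.

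The second step is to bound the matrix-inverse perturbation $\norm{(\widehat{A}_{i,T'})^{-1}-\bar A^{-1}}_2$. Using the identity $(\widehat{A}_{i,T'})^{-1}-\bar A^{-1} = -(\widehat{A}_{i,T'})^{-1}(\widehat{A}_{i,T'}-\bar A)\bar A^{-1}$ together with the submultiplicativity of the operator norm, this is at most $\norm{(\widehat{A}_{i,T'})^{-1}}_2\norm{\bar A^{-1}}_2\norm{\widehat{A}_{i,T'}-\bar A}_2$. The assumption $\norm{\bar A^{-1}}_2\norm{\widehat{A}_{i,T'}-\bar A}_2\leq\tfrac12$ (which the excerpt already uses to establish invertibility) gives, via the Neumann series, $\norm{(\widehat{A}_{i,T'})^{-1}}_2 = \norm{[I+\bar A^{-1}(\widehat{A}_{i,T'}-\bar A)]^{-1}\bar A^{-1}}_2 \leq 2\norm{\bar A^{-1}}_2$. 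Substituting back, $\norm{(\widehat{A}_{i,T'})^{-1}-\bar A^{-1}}_2 \leq 2\norm{\bar A^{-1}}^2_2\norm{\widehat{A}_{i,T'}-\bar A}_2$, hence $\norm{(\widehat{A}_{i,T'})^{-1}-\bar A^{-1}}^2_2\norm{\bar\mu}^2_2 \leq 4\norm{\bar\mu}^2_2\norm{\bar A^{-1}}^4_2\norm{\widehat{A}_{i,T'}-\bar A}^2_2$. Summing over $i$ and invoking the deterministic (noiseless) specialization of Theorem~\ref{theorem:upper bound of stochastic dual accelerated method}—i.e. Eq.~\eqref{eq:upper bound inverse of a matrix for corollary 1 (1)}, where iterate averaging is dropped since $R_{i,t}=A_i^\top A_i$ is deterministic—produces exactly the term $\mathbf{T}_1$.

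Combining the two pieces, taking expectations (the $\bar A$-estimation run is deterministic, so only the $\bar\mu$-run contributes randomness, and the two runs are independent), and absorbing the numerical constants $16, 24, 2$ from Theorem~\ref{theorem:upper bound of stochastic dual accelerated method} together with the factors of $2$ and $4$ above into a single universal constant $C_1$, gives the claimed bound~\eqref{eq:example I upper bound}. The only real subtlety—and the step I expect to require the most care—is making the control of $\norm{(\widehat{A}_{i,T'})^{-1}}_2$ and the bound $\mathbf{T}_1$ mutually consistent: one must verify that the qualitative hypothesis "$T'$ large enough that $\norm{\bar A^{-1}}_2\norm{\widehat{A}_{i,T'}-\bar A}_2\leq\tfrac12$ for all $i$" is indeed guaranteed by the linear convergence rate of \eqref{eq:upper bound inverse of a matrix for corollary 1 (1)}, so that the constant $C_1$ can legitimately be taken universal rather than depending on the problem instance; everything else is a routine chain of triangle and submultiplicativity inequalities.
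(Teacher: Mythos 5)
Your proposal is correct, and it follows the same overall strategy as the paper---bound the $\bar{A}$-error deterministically via the noiseless specialization \eqref{eq:upper bound inverse of a matrix for corollary 1 (1)}, bound the $\bar{\mu}$-error via Theorem \ref{theorem:upper bound of stochastic dual accelerated method}, and glue them with a matrix-inverse perturbation argument under the standing assumption $\norm{\bar{A}^{-1}}_2\norm{\widehat{A}_{i,T'}-\bar{A}}_2\leq\tfrac12$---but your intermediate decomposition differs slightly from the paper's. The paper writes $\widehat{\beta}_{i,T,T'}-\beta^* = [(\widehat{A}_{i,T'})^{-1}-\bar{A}^{-1}]\widehat{\mu}_{i,T} + \bar{A}^{-1}(\widehat{\mu}_{i,T}-\bar{\mu})$ and then splits $\widehat{\mu}_{i,T}=(\widehat{\mu}_{i,T}-\bar{\mu})+\bar{\mu}$, which yields three terms including a cross term $\norm{(\widehat{A}_{i,T'})^{-1}-\bar{A}^{-1}}^2_2\,\norm{\widehat{\mu}_{i,T}-\bar{\mu}}^2_2$ that must be absorbed into $\mathbf{T}_2$ (using that the inverse-error is at most $\norm{\bar{A}^{-1}}_2$ under the half-condition); it controls the inverse perturbation via Lemma \ref{lemma:upper bound inverse of a matrix}. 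You instead attach the $\bar{\mu}$-error to $(\widehat{A}_{i,T'})^{-1}$, so you get exactly two terms and no cross term, at the price of needing the uniform bound $\norm{(\widehat{A}_{i,T'})^{-1}}_2\leq 2\norm{\bar{A}^{-1}}_2$, which you obtain by a Neumann-series argument equivalent in substance to Lemma \ref{lemma:upper bound inverse of a matrix}. Both routes give \eqref{eq:example I upper bound} with the same dependence on $\norm{\bar{A}^{-1}}_2$ and $\norm{\bar{\mu}}_2$; yours is marginally cleaner in the bookkeeping, the paper's avoids invoking a bound on $\norm{(\widehat{A}_{i,T'})^{-1}}_2$ itself. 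Regarding the ``subtlety'' you flag at the end: the paper treats the condition that $T'$ be large enough for $\norm{\bar{A}^{-1}}_2\norm{\widehat{A}_{i,T'}-\bar{A}}_2\leq\tfrac12$ as a standing assumption stated before the corollary rather than something re-derived in its proof, so $C_1$ is universal conditional on that hypothesis; the linear rate \eqref{eq:upper bound inverse of a matrix for corollary 1 (1)} guarantees the hypothesis once $T'\gtrsim\sqrt{\kappa(L)}$ times a logarithm of instance-dependent quantities, but that threshold on $T'$ (not the constant $C_1$) is where the instance dependence lives, so there is no circularity and no gap.
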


Let us interpret the terms appearing in the upper bound \eqref{eq:example I upper bound}. The term $\textbf{T}_1$ bounds the mean squared error $\mathbb{E}\left[\sum^N_{i=1} \norm{\widehat{\beta}_{i,T,T'}- \beta^*}^2_2\right]$ when $\bar{\mu}$ is known, and measures how well we estimate $\bar{A}^{-1}$ at each sensor. More specifically, the factor $e^{-\frac{T'}{\sqrt{\kappa(L)}}} \sum^N_{i=1}\norm{A_i^\top A_i - \bar{A}}^2_2$ is due to the estimation error of $\bar{A}$ at each sensor (see Eq. \eqref{eq:upper bound inverse of a matrix for corollary 1 (1)}), which converges to zero exponentially fast with the optimal dependence on $\kappa(L)$, and the factor $\norm{\bar{A}^{-1}}^4_2$ is caused by inverting the matrices $\{\widehat{A}^i_T\}_{i \in \mathcal{N}}$ and arises because they are not precisely equal to $\bar{A}$ (see Eq. \eqref{eq:upper bound inverse of a matrix for corollary 1 (2)}). Furthermore, when $\bar{\mu}$ is known, we can factor $\bar{\mu}$ out of the difference $\widehat{\beta}_{i,T,T'}- \beta^*$ for each $i$, and thus the norm of the error term is proportional to the norm of $\bar{\mu}$.  

The term $\textbf{T}_2$ bounds the estimation error of $\beta^* = \bar{A}^{-1} \bar{\mu}$ when $\bar{A}^{-1}$ is known and measures how accurately each sensor estimates $\bar{\mu}$ with noisy measurements. The factor $\norm{\bar{A}^{-1}}^2_2$ accounts for the effect of matrix inversion as in standard linear regression, and the factor inside the parentheses is the error of estimating $\bar{\mu}$ at each sensor using the samples, a bound on which is directly implied by Theorem \ref{theorem:upper bound of stochastic dual accelerated method}. 

\paragraph{Guarantees for decentralized multi-agent policy evaluation.} In this setting, we first construct an unbiased estimator $\widehat{P}_T$ of the true transition matrix $P$ using the common state transition samples. For $t=0,\cdots,T-1$, each agent $i \in \mathcal{N}$ uses the set of sample state transitions $\{X_{t,j}|j\in \mathcal{S}\}$ to form a random binary matrix $Z_t \in \{0,1\}^{n \times n}$, in which row $j$ has a single non-zero entry corresponding to the index of the sample $X_{t,j}$. Thus, the location of the non-zero entry in row $j$ is drawn from the probability distribution $P(j, \cdot)$. Based on these observations, we define the common sample transition matrix $\widehat{P}_{T}:= \frac{1}{T} \sum^{T-1}_{t=0} Z_t$. Next, we run \texttt{SDA} with local random vectors $R_{i,t} := \left[R_{i,t,1}, \ldots, R_{i,t,n}\right]^\top$ for all $t=0,\ldots,T-1$ and $i \in \mathcal{N}$ to obtain estimates $\left\{\widehat{r}_{i,T}\right\}_{i \in \mathcal{N}}$ of $\bar{r}$. Finally, each agent $i$ ``plugs-in" the estimates $(\widehat{P}_T, \widehat{r}_{i,T})$ into the Bellman equation \eqref{eq:Bellman equation} to obtain the value function estimate $\widehat{J}_{i,T} := \left(I - \gamma \widehat{P}_T \right)^{-1} \widehat{r}_{i,T}$. The following corollary states a finite-time error bound for this decentralized policy evaluation algorithm; the proof is presented in Section \ref{sec:proof of corollary 2}.

\begin{corollary}
\label{corollary:example II}
Consider running \texttt{SDA} with the following parameters: 
\begin{align*}
T_0 = \frac{T}{2} \geq k^*, \quad \eta = \frac{1}{\lambda_1(L)} ~\text{ and }~  \zeta = \frac{\sqrt{\kappa(L)} - 1}{\sqrt{\kappa(L)} + 1},
\end{align*}
where $k^*$ is defined according to Definition \ref{definition:k^*}. Let $\{\widehat{r}_{i,T}\}_{i \in \mathcal{N}}$ be generated by \texttt{SDA}, $\widehat{P}_T$ denote the sample transition matrix defined above and $\widehat{J}_{i,T} = \left(I - \gamma \widehat{P}_T \right)^{-1} \widehat{r}_{i,T}$ represent the value function estimated by agent $i \in \mathcal{N}$. Then there exists a universal constant $C_2 > 0$ such that
\begin{align}
\label{eq:example II upper bound}
\begin{split}
\mathbb{E}\left[\max_{i \in \mathcal{N}}\norm{\widehat{J}_{i,T} - {J^*}}_{\infty} \right] \leq C_2 &\left\{e^{-\frac{T}{2\sqrt{\kappa(L)}}} \frac{\sqrt{\sum^N_{i=1}\norm{r_i - \bar{r}}^2_2}}{1 - \gamma} + \frac{\kappa(L)^{1/4} \sqrt{\sum^{N}_{i=1} \sum^n_{j=1} \sigma^2_{i,j}}}{(1 - \gamma)T} \right.\\
&\left. \quad + \frac{\sqrt{\sum^n_{j=1}\max_{i \in \mathcal{N}} \sigma^2_{i,j}}}{(1-\gamma)\sqrt{T}} + \frac{\sqrt{\log n}}{(1 - \gamma)^{3/2} \sqrt{T}} + \frac{\log n}{(1 - \gamma)^2 T} \right\}.
\end{split} 
\end{align}
\end{corollary}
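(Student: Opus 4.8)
The plan is to reduce the corollary to Theorem~\ref{theorem:upper bound of stochastic dual accelerated method} via a standard perturbation identity for the solution of the Bellman equation~\eqref{eq:Bellman equation}, and then handle the extra error coming from $\widehat P_T$ by a concentration argument. Since $\widehat J_{i,T}$ solves $\widehat J_{i,T}=\widehat r_{i,T}+\gamma\widehat P_T\widehat J_{i,T}$ and $J^*=\bar r+\gamma P J^*$, subtracting and rearranging gives, for every $i\in\mathcal{N}$,
\begin{align*}
\widehat J_{i,T}-J^* \;=\; \left(I-\gamma\widehat P_T\right)^{-1}\left(\widehat r_{i,T}-\bar r\right)\;+\;\gamma\left(I-\gamma\widehat P_T\right)^{-1}\left(\widehat P_T-P\right)J^*.
\end{align*}
Taking $\norm{\cdot}_\infty$, then $\max_{i\in\mathcal{N}}$, and then expectations (using linearity), the proof reduces to: (i) a deterministic operator-norm bound on $(I-\gamma\widehat P_T)^{-1}$; (ii) a bound on $\EE[\max_{i}\norm{\widehat r_{i,T}-\bar r}_\infty]$ from Theorem~\ref{theorem:upper bound of stochastic dual accelerated method}; and (iii) a concentration bound on $\EE[\norm{(\widehat P_T-P)J^*}_\infty]$. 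No independence between the reward and transition samples is needed for this additive reduction, since $J^*$ and the contraction factor are deterministic.

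For (i): each $Z_t$ is row-stochastic (exactly one unit entry per row), hence so is $\widehat P_T=\frac1T\sum_{t=0}^{T-1}Z_t$; thus $\norm{\gamma\widehat P_T}_\infty=\gamma<1$ and the Neumann series gives $\norm{(I-\gamma\widehat P_T)^{-1}}_\infty\le\frac1{1-\gamma}$, and similarly $\norm{J^*}_\infty\le\frac{\norm{\bar r}_\infty}{1-\gamma}$. For (ii): I would bound $\norm{\cdot}_\infty\le\norm{\cdot}_2$ and $\max_i\norm{\widehat r_{i,T}-\bar r}_2\le(\sum_i\norm{\widehat r_{i,T}-\bar r}_2^2)^{1/2}$, move the expectation inside the square root by Jensen, and invoke Theorem~\ref{theorem:upper bound of stochastic dual accelerated method} with $\mu_i=r_i$ (the local vector $R_{i,t}=[R_{i,t,1},\dots,R_{i,t,n}]^\top$ has mean $r_i$ and coordinate variances $\sigma^2_{i,j}$). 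Splitting the square root of the three-term bound by subadditivity, and simplifying the network-dependent prefactors using $k^*\lesssim\sqrt{\kappa(L)}$ (Lemma~\ref{lemma:k^* upper bound}) together with the fact that the prescribed parameters force $\kappa(L)/T^2=\mathcal{O}(1)$, recovers $\frac1{1-\gamma}$ times the sum of an $e^{-cT/\sqrt{\kappa(L)}}\sqrt{\sum_i\norm{r_i-\bar r}_2^2}$ term, a $\frac{\kappa(L)^{1/4}\sqrt{\sum_{i,j}\sigma^2_{i,j}}}{T}$ term, and a $\frac{\sqrt{\sum_j\max_i\sigma^2_{i,j}}}{\sqrt T}$ term --- i.e.\ the first three terms of~\eqref{eq:example II upper bound}.

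For (iii): the $j$-th coordinate of $(\widehat P_T-P)J^*$ equals $\frac1T\sum_{t=0}^{T-1}\big(J^*(X_{t,j})-\EE[J^*(X_{t,j})]\big)$, a normalized sum of i.i.d.\ mean-zero variables bounded by $\norm{J^*}_\infty$ with variance $\mathbb{V}(j):=\mathrm{Var}_{s'\sim P(j,\cdot)}[J^*(s')]$. A coordinatewise Bernstein inequality, a union bound over the $n$ states, and integrating the tail give a high-probability bound $\big|[(\widehat P_T-P)J^*]_j\big|\lesssim\sqrt{\mathbb{V}(j)\log n/T}+\norm{J^*}_\infty\log n/T$, which immediately controls the lower-order part: multiplied by $\frac{\gamma}{1-\gamma}\norm{J^*}_\infty\lesssim\frac{1}{(1-\gamma)^2}$ it yields the $\frac{\log n}{(1-\gamma)^2T}$ term. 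For the leading part, rather than bounding $\norm{(\widehat P_T-P)J^*}_\infty$ and multiplying by $\frac1{1-\gamma}$ (which loses a $(1-\gamma)^{-1/2}$ factor), I would keep the resolvent and apply a coordinatewise Cauchy--Schwarz step: since the rows of $(I-\gamma\widehat P_T)^{-1}$ are nonnegative with row sums $\frac1{1-\gamma}$, one gets $\norm{(I-\gamma\widehat P_T)^{-1}\sqrt{\mathbb{V}\log n/T}}_\infty\lesssim\sqrt{\tfrac{\log n}{(1-\gamma)T}}\,\sqrt{\norm{(I-\gamma\widehat P_T)^{-1}\mathbb{V}}_\infty}$, and the last factor is $\mathcal{O}(\norm{\bar r}_\infty^2/(1-\gamma)^2)$ by the law of total variance. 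This produces the $\frac{\sqrt{\log n}}{(1-\gamma)^{3/2}\sqrt T}$ term (with $\norm{\bar r}_\infty$ treated as an absolute constant, as is standard for finite MRPs). Collecting the two groups of terms completes the proof.

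I expect the crux to be step (iii), and specifically the sharp $(1-\gamma)^{-3/2}$ scaling of the leading transition-noise term: the cruder power $(1-\gamma)^{-5/2}$ or $(1-\gamma)^{-2}$ that falls out of naive bounds ($\mathbb{V}(j)\le\norm{J^*}_\infty^2$ and multiplication by $\norm{(I-\gamma\widehat P_T)^{-1}}_\infty$) is too lossy, so the coordinatewise Cauchy--Schwarz-through-the-resolvent argument combined with a law-of-total-variance bound on $(I-\gamma\widehat P_T)^{-1}\mathrm{Var}_P[J^*]$ is essential; one also has to verify that the passage to $\EE[\max_j]$ over the $n$ states costs only $\sqrt{\log n}$ (so that a naive $\ell_1$ control of $\widehat P_T-P$ must be avoided). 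The remaining bookkeeping --- keeping the bias, per-node variance, and transition-noise contributions separate throughout, and checking that $\widehat A_{i,T'}$-style invertibility is not needed here because $\widehat P_T$ is automatically $\gamma$-contractive --- is routine.
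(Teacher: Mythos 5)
Your decomposition is the same as the paper's: the second term you write, $\gamma\left(I-\gamma\widehat P_T\right)^{-1}\left(\widehat P_T-P\right)J^*$, is algebraically identical to the paper's $\left(I-\gamma\widehat P_T\right)^{-1}\bar r-J^*$, and your treatment of the reward-estimation term---the deterministic bound $\norm{\left(I-\gamma\widehat P_T\right)^{-1}v}_\infty\le\frac{1}{1-\gamma}\norm{v}_\infty$ (the paper cites Lemma~2.3 of Agarwal et al.; you prove it directly from row-stochasticity of $\widehat P_T$, which is fine), followed by $\ell_\infty\le\ell_2$, $\max_i\le\sqrt{\sum_i}$, Jensen, and Theorem~\ref{theorem:upper bound of stochastic dual accelerated method}---is exactly the paper's chain of inequalities producing the first three terms of \eqref{eq:example II upper bound}. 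Where you genuinely diverge is the transition-noise term: the paper invokes Theorem~1(b) of Pananjady and Wainwright as a black box (integrating its high-probability bound), whereas you propose to re-derive it via coordinatewise Bernstein plus a union bound for the $\frac{\log n}{(1-\gamma)^2T}$ term and a Cauchy--Schwarz-through-the-resolvent plus total-variance argument for the $(1-\gamma)^{-3/2}$ term. That is indeed the mechanism inside the cited result, so your route buys self-containedness at the price of reconstructing that analysis.

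The one step in your sketch that is not yet a proof is the claim that $\norm{\left(I-\gamma\widehat P_T\right)^{-1}\mathbb{V}}_\infty=\mathcal{O}\!\left(\norm{\bar r}_\infty^2/(1-\gamma)^2\right)$ ``by the law of total variance,'' where $\mathbb{V}=\mathrm{Var}_{P}(J^*)$. The total-variance (variance Bellman) identity controls the matched objects $\left(I-\gamma P\right)^{-1}\mathrm{Var}_P(J^*)$ or $\left(I-\gamma\widehat P_T\right)^{-1}\mathrm{Var}_{\widehat P_T}(\widehat J)$, not the mixed quantity with the empirical resolvent acting on the population variance; the naive bridge via $\norm{\left(I-\gamma\widehat P_T\right)^{-1}\left(I-\gamma P\right)}_\infty\le 1+\frac{2\gamma}{1-\gamma}$ costs an extra factor $(1-\gamma)^{-1}$ and destroys exactly the $(1-\gamma)^{-3/2}$ rate you are trying to preserve. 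Closing this requires the additional variance-swapping step (relating $\mathrm{Var}_P(J^*)$ to $\mathrm{Var}_{\widehat P_T}$ of the empirical value function up to lower-order corrections, as in Azar et al.\ and in the result the paper cites). Once that lemma is in place, the rest of your argument---including passing from tails to expectations, which is legitimate because the transition-noise term is deterministically bounded by $2\gamma\norm{J^*}_\infty/(1-\gamma)$---goes through and recovers the last two terms of \eqref{eq:example II upper bound}; alternatively, citing the plug-in guarantee as the paper does short-circuits this entire block.
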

It is instructive to compare our result with the result on distributed temporal-difference learning \citep{doan2019finite} for static networks\footnote{Note that strictly speaking, the two results are not comparable because we consider $\ell_\infty$-norm under a generative model whereas they consider a weighted $\ell_2$-metric under an i.i.d. observation model. Having said this, minimax rates of estimation are known to be comparable under both these observation models in the centralized setting for fast-mixing Markov chains~\citep{li2021sample}.}. First, we notice that their algorithm is not accelerated in terms of the network. Thus, our bias term converges much faster when the condition number of the network is large. Second, our bound scales as $\max\left\{\frac{\sqrt{\sum^n_{j=1}\max_{i \in \mathcal{N}} \sigma^2_{i,j}}}{(1-\gamma)\sqrt{T}}, \frac{\sqrt{\log n}}{(1-\gamma)^{3/2} \sqrt{T}}\right\}$ while theirs (Eq. (16) in their Theorem 1) scales as $\frac{1}{(1-\gamma)^{3/2} T^{1/4}}$ in the tabular setting. To see this, notice that the quantity $\beta_0$ defined in their Theorem 1 scales as $\frac{1}{(1-\gamma)^2}$. Consequently, our sample complexity in $1 - \gamma$ is tighter because to reach any given precision $\varepsilon > 0$, the samples per agent required for our algorithm is $\max \left\{\frac{\sum^n_{j=1}\max_{i \in \mathcal{N}} \sigma^2_{i,j}}{\varepsilon^2 \left(1 - \gamma \right)^2}, \frac{\log n}{\varepsilon^2 \left(1 - \gamma\right)^3}\right\}$ as opposed to $\frac{1}{\varepsilon^2 \left(1 - \gamma\right)^6}$ for their algorithm.
\section{Numerical Results}
\label{sec:numerical results}
In this section, we conduct several experiments on synthetic examples to validate our theory and assess the performance of \texttt{SDA}. The implementation details are provided in Appendix \ref{sec:implementation details}. We simulate an online stochastic distributed averaging problem with $N$ nodes. At every time step $t \geq 0$, each node $i \in \mathcal{N}$ observes a local random variable $R_{i,t} \in \mathbb{R}$ (i.e. setting $n = 1$) drawn independently from the Gaussian distribution with mean $\mu_i \in \mathbb{R}$ and variance $\sigma^2=1$. We choose each $\mu_i$ by drawing independently from the uniform distribution over the interval $[0, b]$, where the constant $b \geq 0$ will be specified later in each set of experiments. We consider the following $5$ canonical network architectures:
\begin{enumerate}
    \item[(1)] Path graph: all $N$ nodes lie on a single straight line.
    \item[(2)] Cycle graph: a single cycle through all $N$ nodes.
    \item[(3)] Star graph: all other $N-1$ nodes connected to a single central node.
    \item[(4)] Square grid graph: all $N$ nodes form an $\sqrt{N} \times \sqrt{N}$ square grid.
    \item[(5)] A single realization of the Erdős–Rényi random graph: each edge is included in the graph with probability $p=\frac{2\ln{(N)}}{N}$, independently from every other edge. Here, the value of $p$ is chosen so that with high probability the realized graph is connected yet sparse.
\end{enumerate}

\begin{figure}[ht!]
\centering
\begin{subfigure}{0.27\textwidth}
    \includegraphics[width=\textwidth]{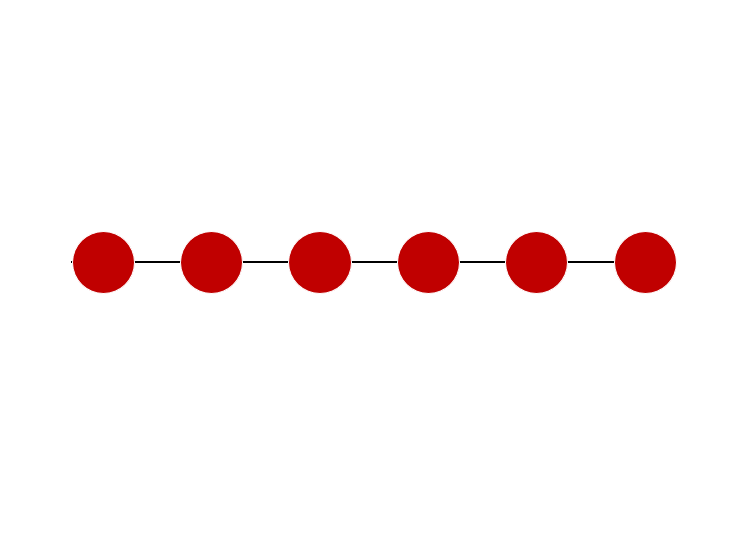}
    \caption{Path graph.}
    \label{fig:path graph}
\end{subfigure}
\hfill
\begin{subfigure}{0.3\textwidth}
    \includegraphics[width=\textwidth]{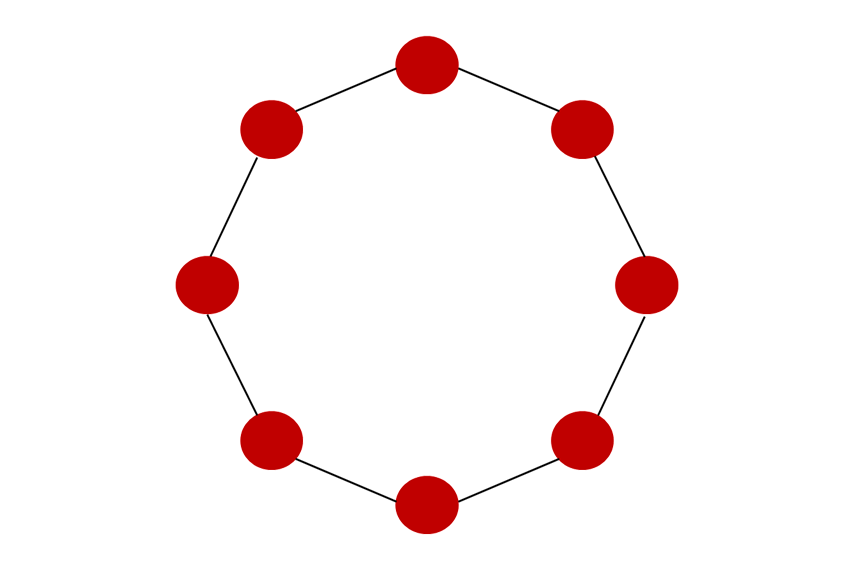}
    \caption{Cycle graph.}
    \label{fig:cycle graph}
\end{subfigure}
\hfill
\begin{subfigure}{0.3\textwidth}
    \includegraphics[width=\textwidth]{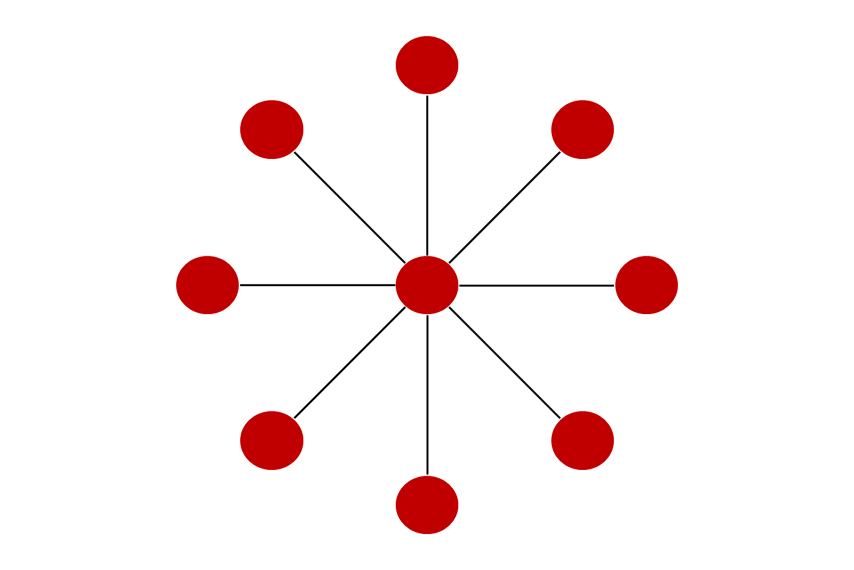}
    \caption{Star graph.}
    \label{fig:star graph}
\end{subfigure}
\begin{subfigure}{0.22\textwidth}
    \includegraphics[width=\textwidth]{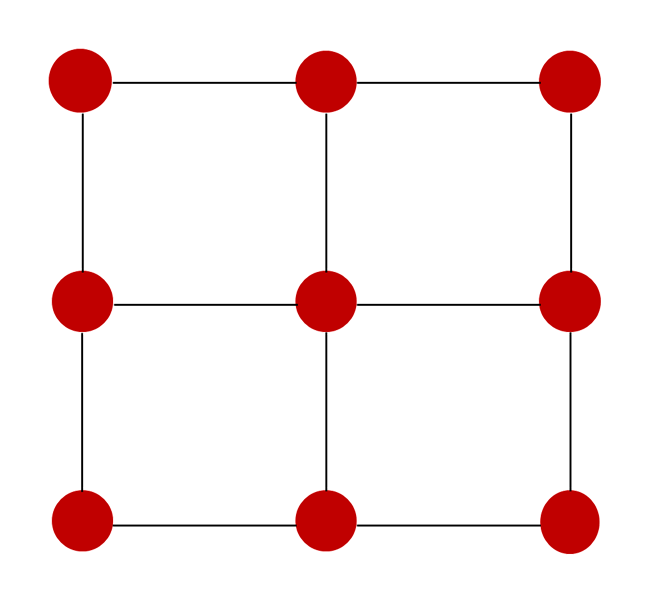}
    \caption{Square grid graph.}
    \label{fig:square grid graph}
\end{subfigure}
\hspace{1.5cm}
\begin{subfigure}{0.3\textwidth}
    \includegraphics[width=\textwidth]{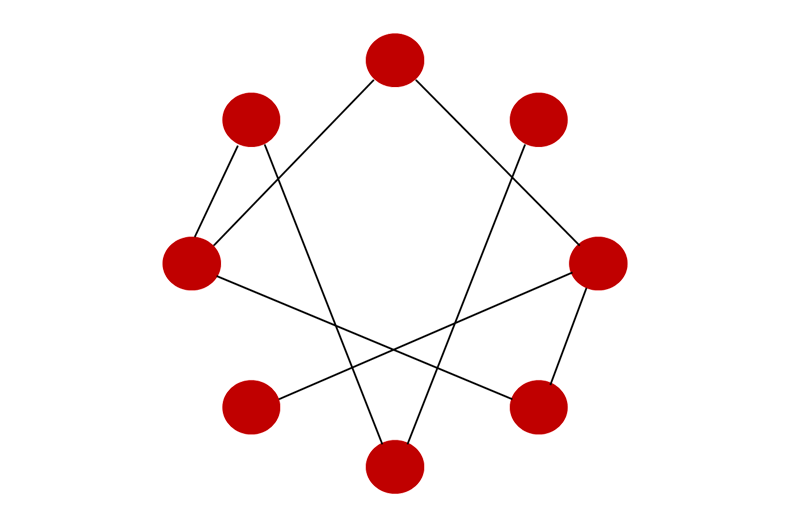}
    \caption{Erdős–Rényi random graph.}
    \label{fig:random graph}
\end{subfigure}
\vspace{0.3cm}
\caption{Illustration of the $5$ network architectures.}
\label{fig:5 networks}
\end{figure}

These graphs are illustrated in Figure \ref{fig:5 networks}. Note that, with the same number of nodes, the network connectivity from the path graph to the Erdős–Rényi random graph is increasing. In other words, when $N$ is fixed, we should expect $\kappa(L)$ to be decreasing from the path graph to the Erdős–Rényi random graph. For instance, in Figure \ref{fig:comparing with primal methods}--\ref{fig:sample complexity smaller epsilon SDA}, when $N=100$, the values of $\kappa(L)$ under the five network configurations are approximately $4052$, $1013$, $100$, $76$ and $11$ (with $p=\frac{2\ln{(N)}}{N}=0.0921$), respectively.

\subsection{Convergence behavior: \texttt{SDA} vs \texttt{D-MASG} vs \texttt{DSG}}
\label{sec:numerical results convergence behavior}
\begin{figure}[ht!]
\centering
\begin{subfigure}{0.46\textwidth}
    \includegraphics[width=\textwidth]{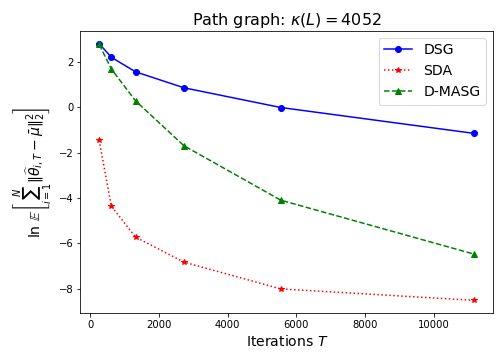}
    \label{fig:mse path graph}
\end{subfigure}
\hfill
\begin{subfigure}{0.46\textwidth}
    \includegraphics[width=\textwidth]{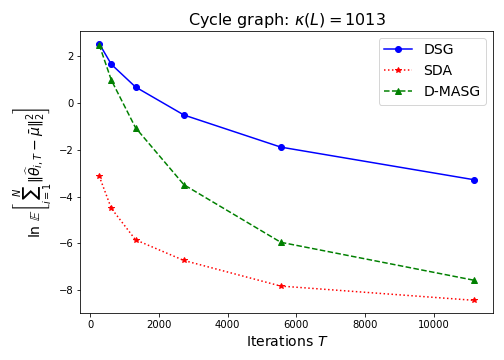}
    \label{fig:mse cycle graph}
\end{subfigure}

\begin{subfigure}{0.46\textwidth}
    \includegraphics[width=\textwidth]{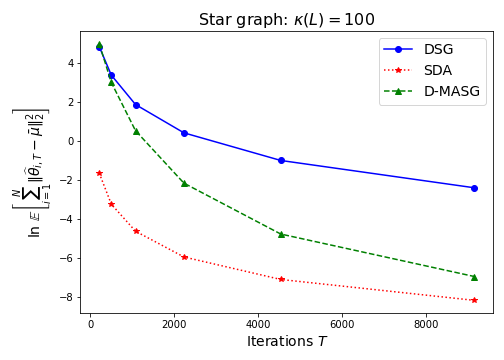}
    \label{fig:mse star graph}
\end{subfigure}
\hfill
\begin{subfigure}{0.46\textwidth}
    \includegraphics[width=\textwidth]{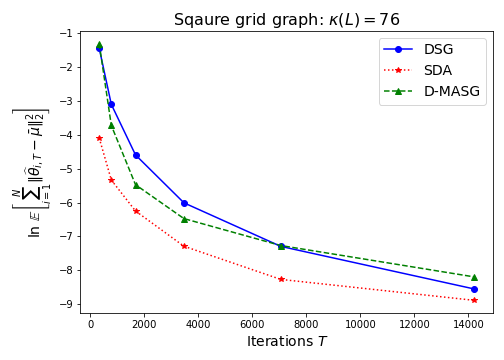}
    \label{fig:mse square grid graph}
\end{subfigure}

\begin{subfigure}{0.46\textwidth}
    \includegraphics[width=\textwidth]{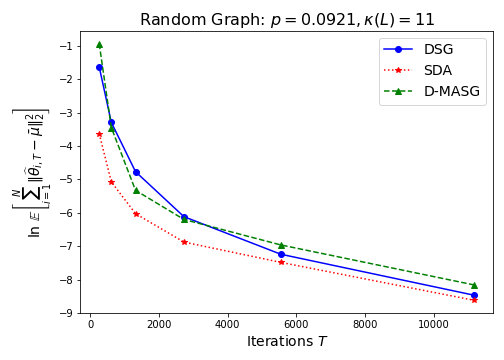}
    \label{fig:mse random graph}
\end{subfigure}
\caption{Convergence behavior of \texttt{SDA}, \texttt{DSG} and \texttt{D-MASG} on different graphs with $N=100$ nodes.}
\label{fig:comparing with primal methods}
\end{figure}

 In our first set of experiments, we compare \texttt{SDA} with two primal methods: \texttt{DSG} and \texttt{D-MASG}, where \texttt{DSG} is a non-accelerated method (see Appendix \ref{sec:distributed stochastic gradient}) while \texttt{D-MASG} \citep{fallah2019robust} is an accelerated method (see comment (3) of Section \ref{sec:main results}). Readers can see \texttt{DSG} as a baseline for this set of experiments. To show the convergence behavior of the three methods, we plot $\ln \mathbb{E}\left[\sum^N_{i=1} \norm{\widehat{\theta}_{i,T} - \bar{\mu}}^2_2\right]$ as a function of the number of iterations $T$ in Figure \ref{fig:comparing with primal methods}, where $\widehat{\theta}_{i,T}$ is the estimate of $\bar{\mu}$ by node $i$ after $T$ iterations of an algorithm. We can observe that (i) \texttt{SDA} outperforms the two primal algorithms on each graph and has significantly better performance when the network connectivity parameter $\kappa(L)$ is large; (ii) \texttt{SDA} and \texttt{D-MASG} converge much faster during their early iterations than \texttt{DSG} especially when $\kappa(L)$ is large, which demonstrates the acceleration of \texttt{SDA} and \texttt{D-MASG}. 

\subsection{Sample complexity: \texttt{SDA} vs \texttt{SSTM\_sc}}
\label{sec:numerical results sample complexity}
\begin{figure}[ht!]
\centering
\begin{subfigure}{0.46\textwidth}
    \includegraphics[width=\textwidth]{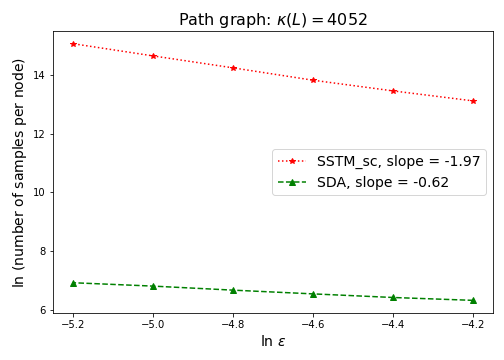}
    \label{fig:sample path graph}
\end{subfigure}
\hfill
\begin{subfigure}{0.46\textwidth}
    \includegraphics[width=\textwidth]{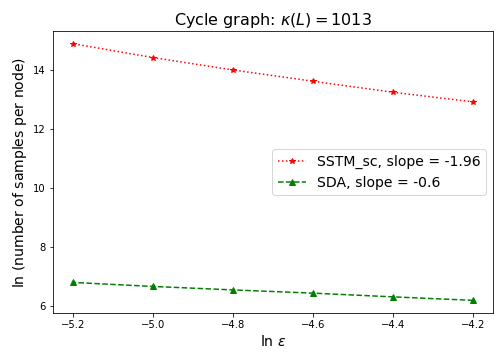}
    \label{fig:sample cycle graph}
\end{subfigure}

\begin{subfigure}{0.46\textwidth}
    \includegraphics[width=\textwidth]{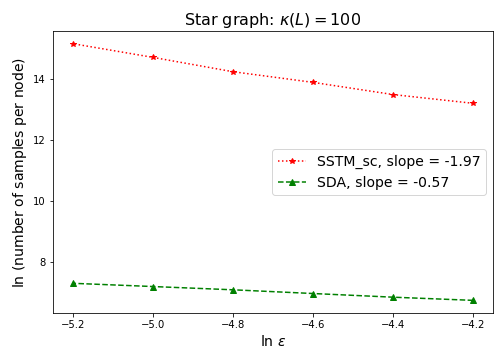}
    \label{fig:sample star graph}
\end{subfigure}
\hfill
\begin{subfigure}{0.46\textwidth}
    \includegraphics[width=\textwidth]{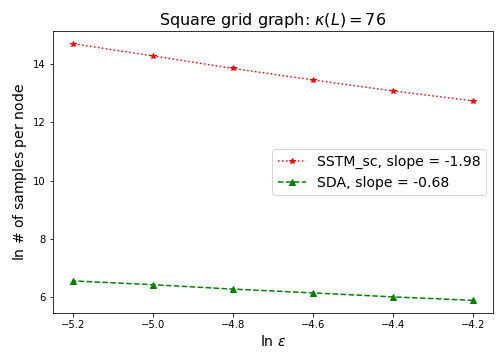}
    \label{fig:sample square grid graph}
\end{subfigure}

\begin{subfigure}{0.46\textwidth}
    \includegraphics[width=\textwidth]{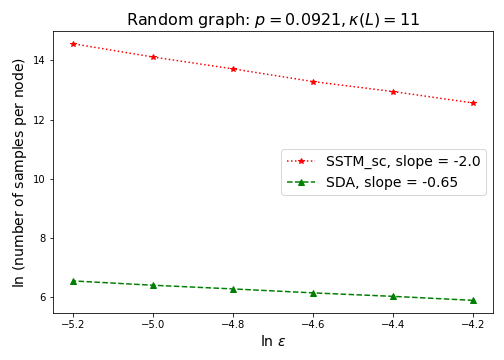}
    \label{fig:sample random graph}
\end{subfigure}
\caption{Sample complexity of \texttt{SDA} and \texttt{SSTM\_sc} on different graphs with $N=100$ nodes.}
\label{fig:sample complexity large epsilon}
\end{figure}

\begin{figure}[ht!]
\centering
\begin{subfigure}{0.46\textwidth}
    \includegraphics[width=\textwidth]{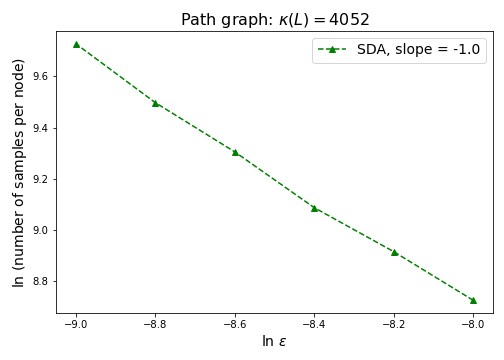}
    \label{fig:sample path graph SDA}
\end{subfigure}
\hfill
\begin{subfigure}{0.46\textwidth}
    \includegraphics[width=\textwidth]{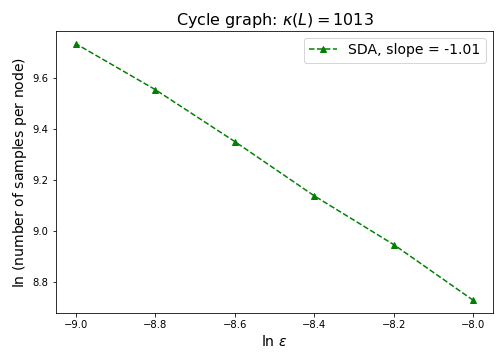}
    \label{fig:sample cycle graph SDA}
\end{subfigure}

\begin{subfigure}{0.46\textwidth}
    \includegraphics[width=\textwidth]{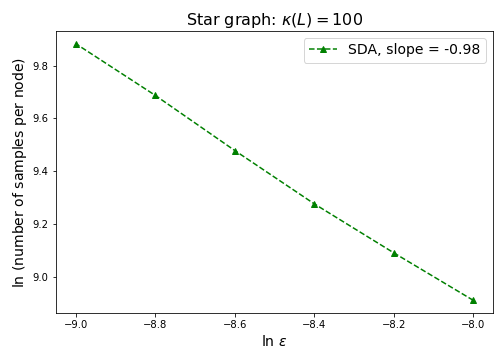}
    \label{fig:sample star graph SDA}
\end{subfigure}
\hfill
\begin{subfigure}{0.46\textwidth}
    \includegraphics[width=\textwidth]{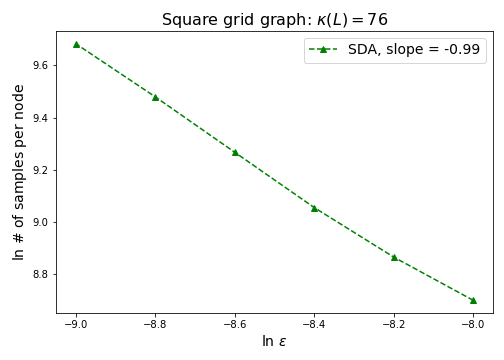}
    \label{fig:sample square grid graph SDA}
\end{subfigure}

\begin{subfigure}{0.46\textwidth}
    \includegraphics[width=\textwidth]{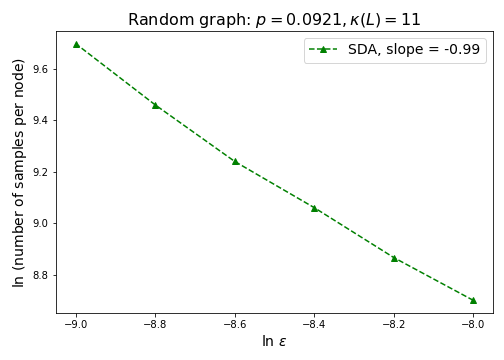}
    \label{fig:sample random graph SDA}
\end{subfigure}
\caption{Sample complexity of \texttt{SDA} for smaller $\varepsilon$ on different graphs with $100$ nodes.}
\label{fig:sample complexity smaller epsilon SDA}
\end{figure}

In our second set of experiments, we compare \texttt{SDA} with the state-of-the-art dual accelerated method \texttt{SSTM\_sc} \citep{gorbunov2019optimal} (see comment (4) of Section \ref{sec:main results}) by focusing on sample complexity. Following~\citet{gorbunov2019optimal}, we define the sample complexity of a distributed algorithm as the number of samples per node needed to ensure that $\mathbb{E}\left[\sum^N_{i=1} \norm{\widehat{\theta}_i - \bar{\mu}}^2_2\right] \leq \varepsilon$, where $\widehat{\theta}_i$ is the agent $i$'s estimate of $\bar{\mu}$ returned by a distributed algorithm, and $\varepsilon > 0$ is the given accuracy. The other dual accelerated method \texttt{R-RRMA-AC-SA$^2$} (also mentioned in comment (4) of Section \ref{sec:main results}) has many more tunable parameters than \texttt{SSTM\_sc} and is challenging to implement. Owing to the fact that it has the sample complexity guarantees as \texttt{SSTM\_sc}, we compare our algorithm only with \texttt{SSTM\_sc}.   

We plot the sample complexity as a function of $\varepsilon$ on a log-log plot. As illustrated in Figure \ref{fig:sample complexity large epsilon}, (i) \texttt{SDA} requires considerably smaller number of samples per node than \texttt{SSTM\_sc} to reach $\varepsilon$ accuracy for each graph; (ii) the slopes of \texttt{SSTM\_sc} for different graphs are nearly $-2$, which verifies the $ \mathcal{O}\left(\frac{1}{\varepsilon^2}\right)$ sample complexity of \texttt{SSTM\_sc} alluded to in comment (3) of Section \ref{sec:main results}; (iii) the slopes of \texttt{SDA} vary between $-1$ and $-0.5$ for different graphs. This is because the sample complexity of \texttt{SDA} is determined by two terms: $\mathcal{O}\left(\frac{N^{1/2} \kappa(L)^{1/4} }{\varepsilon^{1/2}} \right)$ and $\mathcal{O}\left(\frac{1}{\varepsilon}\right)$, and for the values of $\varepsilon$ in Figure \ref{fig:sample complexity large epsilon}, neither term is dominant. Therefore, we perform a complementary set of experiments only with \texttt{SDA} for much smaller $\varepsilon$ so that $\mathcal{O}\left(\frac{1}{\varepsilon}\right)$ dominates $\mathcal{O}\left(\frac{N^{1/2} \kappa(L)^{1/4}}{\varepsilon^{1/2}} \right)$. We can see from Figure \ref{fig:sample complexity smaller epsilon SDA} that the slope of \texttt{SDA} is approximately $-1$ for each graph, which clearly demonstrates the $\mathcal{O}\left(\frac{1}{\varepsilon}\right)$ sample complexity of \texttt{SDA}.

\subsection{Non-asymptotic behavior: \texttt{SDA} vs \texttt{D-MASG}}
\label{sec:numerical results non-asymptotic regime behavior}
\begin{figure}[ht!]
\centering
\includegraphics[width=0.5\textwidth]{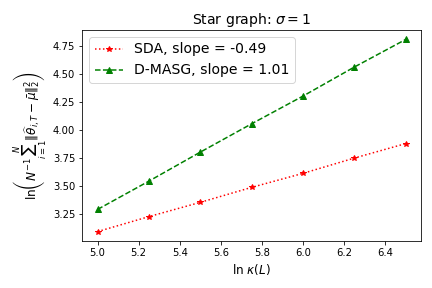}
\caption{Non-asymptotic behavior of \texttt{SDA} and \texttt{D-MASG} on a series of star graphs.}
\label{fig:non-asymptotic behavior}
\end{figure}

In our third experiment, we investigate the performance of \texttt{SDA} and \texttt{D-MASG} in the non-asymptotic regime where $T \asymp \sqrt{\kappa(L)}$. As shown in comment (3) of Section \ref{sec:main results}, the deterministic error of both methods decays linearly at an accelerated rate with exponent $\mathcal{O}\left(- \frac{T}{\sqrt{\kappa(L)}}\right)$ and the stochastic error of both methods has asymptotic decay rate $\mathcal{O}\left(\frac{\sigma^2}{T}\right)$. Therefore, the non-asymptotic behavior of both methods when $T \asymp \sqrt{\kappa(L)}$ should tell us which algorithm has a smaller higher-order term in its stochastic error and hence enjoys better finite-time convergence behavior. 
We consider the star graph for this experiment because the network connectivity parameter $\kappa(L)$ of the star graph with $N$ nodes is approximately equal to $N$ given that we use the Metropolis--Hastings weight matrix to construct $L$ (see Appendix \ref{sec:implementation details} for details). Thus, we are able to generate a family of star graphs with ascending $\kappa(L)$ by simply increasing $N$. 
We plot $\ln \mathbb{E}\left[\sum^N_{i=1} \norm{\widehat{\theta}_{i,T} - \bar{\mu}}^2_2\right]$ as a function of $\ln \kappa(L)$, where $T$ is set to $\sqrt{\kappa(L)}$ for each star graph instance. As shown in Figure \ref{fig:non-asymptotic behavior}, the slope of \texttt{SDA} is very close to 0.5 and the slope of \texttt{D-MASG} is around 1, which demonstrates the superior performance of \texttt{SDA} in the given finite-time regime. This behavior was alluded to in comment (3) of Section \ref{sec:main results}: When $T \asymp \sqrt{\kappa(L)}$, \texttt{SDA} enjoys the error bound $\mathcal{O}\left(\sqrt{\kappa(L)}\sigma^2\right)$ while \texttt{D-MASG} only attains the $\mathcal{O}\left(\kappa(L)\sigma^2\right)$ error.

\section{Proofs of Main Results}
This section is devoted to the proofs of our main results---Theorem \ref{theorem:upper bound of stochastic dual accelerated method} and its associated corollaries. We use several technical lemmas in these proofs, which are stated and proved in Section~\ref{sec:tech-lemmas} to follow.

\subsection{Proof of Theorem \ref{theorem:upper bound of stochastic dual accelerated method}}
\label{sec:proof of theorem 1}

Since the system in \texttt{SDA} consisting of $\{\left(\theta_{i,t}, x_{i,t}, y_{i,t}\right) |~ i \in \mathcal{N} \text{ and } t \geq 0\}$ is linear, the evolutions of different columns of vectors $\theta_{i,t}, x_{i,t}$ and $y_{i,t}$ along $N$ nodes are independent. Thus, it is sufficient to fix an arbitrary column $j \in \{1,\ldots,n\}$ for analysis at the moment. \texttt{SDA} repeats for all integer $t \geq 0$:
\begin{subequations}
\label{DA1}
\begin{align}
\widetilde{\Theta}_{t,j} &= \widetilde{X}_{t,j} + \widetilde{R}_{t,j}, \label{DAa}\\
\widetilde{Y}_{t+1,j} &= \widetilde{X}_{t,j} - \eta  L \widetilde{\Theta}_{t,j}, \label{DAb}\\
\widetilde{X}_{t+1,j} &= \widetilde{Y}_{t+1,j} + \zeta(\widetilde{Y}_{t+1,j} - \widetilde{Y}_{t,j}) \label{DAc},
\end{align}
\end{subequations}
where $\widetilde{\Theta}_{t,j} = \begin{bmatrix}
\theta_{1,t,j} \cdots \theta_{N,t,j}
\end{bmatrix}^\top$, $\widetilde{X}_{t,j} = \begin{bmatrix}
x_{1,t,j} \cdots x_{N,t,j} 
\end{bmatrix}^\top$, $\widetilde{Y}_{t,j} = \begin{bmatrix}
y_{1,t,j} \cdots y_{N,t,j} 
\end{bmatrix}^\top$ and $\widetilde{R}_{t,j} = \begin{bmatrix}
R_{1,t,j} \cdots R_{N,t,j} 
\end{bmatrix}^\top$. Here, we use $\theta_{i,t,j}, x_{i,t,j}, y_{i,t,j}$ and $R_{i,t,j}$ to denote the $j$-th component of the vectors $\theta_{i,t}, x_{i,t}, y_{i,t}$ and $R_{i,t}$, respectively. Note that $\widetilde{R}_{t,j}$ has mean vector $\widetilde{\mu}_j := \begin{bmatrix}
\mu_{1,j} \cdots \mu_{N,j} \end{bmatrix}^\top \in \mathbb{R}^N$ and diagonal covariance matrix $\widetilde{\Sigma}_j :=\text{diag}(\sigma^2_{1,j},\ldots,\sigma^2_{N,j}) \in \mathbb{R}^{N \times N}$. Let $U_{t,j} := \widetilde{X}_{t,j} - \left(\bar{\mu}_j e_N - \widetilde{\mu}_j\right)$, where $e_N \in \mathbb{R}^N$ is the all-ones vector, and $V_{t,j} := \widetilde{Y}_{t,j} - \widetilde{Y}_{t-1,j}$. Then equations \eqref{DA1} can be rewritten as
\begin{subequations}
\label{DA2}
\begin{align}
\widetilde{\Theta}_{t,j} - \bar{\mu}_j e_N &= U_{t,j} + \omega_{t,j}, \label{DAd}\\
U_{t+1,j} &= \Big[I_{N} - \eta (1+\zeta) L \Big] U_{t,j} + \zeta^2 V_{t,j} - \eta (1+\zeta) L\omega_{t,j}, \label{DAe}\\
V_{t+1,j} &= -\eta L U_{t,j} + \zeta V_{t,j} - \eta L \omega_{t,j}, \label{DAf}
\end{align}
\end{subequations}
where $\omega_{t,j} := \widetilde{R}_{t,j} - \widetilde{\mu}_j$ has zero mean and covariance matrix $\widetilde{\Sigma}_j$. Combining Eq.~\eqref{DAe} and \eqref{DAf}, we have the following recursion for all integer $t\geq 0$:
\begin{align}
\label{DA recursion}
\begin{bmatrix}
U_{t+1,j}\\
V_{t+1,j}
\end{bmatrix} &= A \begin{bmatrix}
U_{t,j}\\
V_{t,j}
\end{bmatrix} - \eta B\omega_{t,j},
\end{align}
where
\begin{align}
\label{A and B}
A:= \begin{bmatrix}
I_{N} - \eta (1+\zeta) L & \zeta^2  I_{N}\\
-\eta L & \zeta I_{N} \\
\end{bmatrix} ~\text{ and }~ B:= \begin{bmatrix}
(1+\zeta)  L \\
L
\end{bmatrix}.
\end{align}
Unrolling the recursion \eqref{DA recursion}, we obtain that for all integer $t \geq 1$:
\begin{align}
\label{DA recursion unrolled}
\begin{bmatrix}
U_{t,j}\\
V_{t,j}
\end{bmatrix} &= A^t \begin{bmatrix}
U_{0,j}\\
V_{0,j}
\end{bmatrix} - \eta \sum^{t-1}_{k=0} A^{t-1-k} B \omega_{k,j},
\end{align}
where $U_{0,j} = \widetilde{X}_{0,j} - \left(\bar{\mu}_j e_N - \widetilde{\mu}_j \right) = \widetilde{\mu}_j - \bar{\mu}_j e_N$ and $V_{0,j} = 0$. Noting that
$\widehat{\Theta}_{T,j} := \begin{bmatrix}
\widehat{\theta}_{1,T,j} \cdots \widehat{\theta}_{N,T,j}
\end{bmatrix}^\top = \frac{1}{T-T_0} \sum^{T-1}_{t=T_0} \widetilde{\Theta}_{t, j}$, we have
\begin{align*}
\widehat{\Theta}_{T,j} - \bar{\mu}_j e_N &= \frac{1}{T-T_0} \sum^{T-1}_{t=T_0} \left(\widetilde{\Theta}_{t,j} - \bar{\mu}_j e_N\right) \overset{(a)}{=}\frac{1}{T-T_0} \sum^{T-1}_{t=T_0} U_{t,j} + \frac{1}{T-T_0} \sum^{T-1}_{t=T_0} \omega_{t,j}\\
&\overset{(b)}{=} \frac{1}{T-T_0} \begin{bmatrix}
I_{N} & 0
\end{bmatrix}\left(\sum^{T-1}_{t=T_0} A^t\right) \begin{bmatrix}
U_{0,j}\\
V_{0,j}
\end{bmatrix} - \frac{\eta}{T-T_0} \begin{bmatrix}
I_{N} & 0
\end{bmatrix}\sum^{T-1}_{t=T_0}\sum^{t-1}_{k=0} A^{t-1-k}B \omega_{k,j} \\
&\quad+ \frac{1}{T-T_0} \sum^{T-1}_{t=T_0} \omega_{t,j}\\
&\overset{(c)}{=} \frac{1}{T-T_0} \begin{bmatrix}
I_{N} & 0
\end{bmatrix} \left(\sum^{T-1}_{t=T_0} A^t\right) \begin{bmatrix}
U_{0,j}\\
V_{0,j}
\end{bmatrix}\\
&\quad- \frac{\eta}{T-T_0}\sum^{T_0-1}_{t=0} \begin{bmatrix}
I_N & 0
\end{bmatrix}A^{T_0-1-t} \left(\sum^{T-T_0-1}_{k=0}A^{k}\right) B \omega_{t,j}\\
&\quad- \frac{\eta}{T-T_0} \sum^{T-2}_{t=T_0} \left[ \begin{bmatrix}
I_N & 0
\end{bmatrix} \left(\sum^{T-2-t}_{k=0}A^{k}\right)B - I_N \right] \omega_{t,j} +\frac{1}{T-T_0} \omega_{T-1,j}.
\end{align*}
Here step (a) is due to Eq.~\eqref{DAd}, step (b) follows from $U_{t,j}= \begin{bmatrix}
I_N & 0
\end{bmatrix}\begin{bmatrix}
U_{t,j}\\
V_{t,j}
\end{bmatrix}$ and Eq.~\eqref{DA recursion unrolled}, and step (c) uses the following identity:
\begin{align*}
\sum^{T-1}_{t=T_0}\sum^{t-1}_{k=0}A^{t-1-k}B \omega_{k,j} &= \sum^{T_0-1}_{t=0}A^{T_0-1-t} \left(\sum^{T-T_0-1}_{k=0}A^{k}\right) B \omega_{t,j} +\sum^{T-2}_{t=T_0}\left(\sum^{T-2-t}_{k=0}A^{k}\right)B \omega_{t,j}.
\end{align*}
Thus, by the independence of the random vectors $\{\omega_{t, j}\}^{T-1}_{t=0}$, we have
\begin{align}
\label{eq:mean squared error decomposition}
\begin{split}
\mathbb{E}\left[\norm{\widehat{\Theta}_{T,j} - \bar{\mu}_j e_N}^2_2\right]
=& \frac{1}{(T-T_0)^2} \underbrace{\norm{\begin{bmatrix}
I_N & 0
\end{bmatrix} \left(\sum^{T-1}_{t=T_0} A^t \right)\begin{bmatrix}
U_{0,j}\\
V_{0,j}
\end{bmatrix}}^2_2}_{\Delta_1}\\
&+ \frac{1}{(T-T_0)^2} \underbrace{\sum^{T_0-1}_{t=0} \norm{\eta \begin{bmatrix}
I_N & 0
\end{bmatrix}A^{T_0-1-t} \left(\sum^{T-T_0-1}_{k=0}A^{k}\right) B \sqrt{\tilde{\Sigma}_j}}^2_F}_{\Delta_2}\\
&+ \frac{1}{(T-T_0)^2} \underbrace{\sum^{T-2}_{t=T_0} \norm{\left[\eta \begin{bmatrix}
I_N & 0
\end{bmatrix}\left( \sum^{T-2-t}_{k=0}A^{k}\right) B - I_N\right]\sqrt{\tilde{\Sigma}_j}}^2_F}_{\Delta_3}\\
&+ \frac{1}{(T-T_0)^2} \sum^N_{i=1} \sigma^2_{i,j},
\end{split} 
\end{align}
where $\norm{\cdot}_F$ is the Frobenius norm. Next, we bound $\{\Delta_j\}_{j = 1}^3$ in turn.

\paragraph{Bounding $\Delta_1$:}
By Lemma \ref{lemma: permutation and block diagonal matrix}, we have for any integer $t\geq 0$:
\begin{align}
\label{eq:bound (1)6}
A^t  = \begin{bmatrix}
Q  & 0\\
0  & Q \\
\end{bmatrix} P_{\pi}^\top \begin{bmatrix}
A(\lambda_1(L))^t & &\\
 & \ddots & \\
 & & A(\lambda_{N}(L))^t
\end{bmatrix} P_{\pi} \begin{bmatrix}
Q^\top  & 0\\
0 & Q^\top \\
\end{bmatrix},
\end{align}
where $P_{\pi}$ is a permutation matrix.
Since $V_{0,j} = 0$, we have
\begin{align}
\label{eq:bound (1)1}
\begin{bmatrix}
I_N & 0
\end{bmatrix} A^t \begin{bmatrix}
U_{0,j}\\
V_{0,j}
\end{bmatrix} = \begin{bmatrix}
Q  & 0\\
\end{bmatrix} P_{\pi}^\top \begin{bmatrix}
A(\lambda_1(L))^t & & \\
 & \ddots & \\
 & & A(\lambda_{N}(L))^t
\end{bmatrix} P_{\pi} \begin{bmatrix}
\tilde{U}_{0,j}\\
0\\
\end{bmatrix},
\end{align}
where $\widetilde{U}_{j} := Q^\top U_{0,j}$. Since $U_{0,j} = \widetilde{\mu}_j - \bar{\mu}_j e_N$ is orthogonal to $e_N$, which is a scalar multiple of the last column of $Q$, the last element of  $\widetilde{U}_{j}$ is equal to zero, i.e., $\widetilde{U}_{j,N} = 0$. As a result, we have
\begin{align}
\label{eq:bound (1)3}
\sum^{N-1}_{i=1} \widetilde{U}^2_{j,i} = \sum^{N}_{i=1} \widetilde{U}^2_{j,i} = \norm{\widetilde{U}_j}^2_2 = \norm{U_{0,j}}^2_2 = \norm{\widetilde{\mu}_j - \bar{\mu}_j e_N}^2_2.
\end{align}
By the definition of the permutation matrix $P_\pi$ (see Lemma \ref{lemma: permutation and block diagonal matrix}), we have
\begin{align*}
P_{\pi} \begin{bmatrix}
\widetilde{U}_j\\
0\\
\end{bmatrix} = \begin{bmatrix}
\widetilde{U}_{j,1}\\
0\\
\vdots\\
\widetilde{U}_{j,N}\\
0\\
\end{bmatrix},
\end{align*}
from which it follows that
\begin{align*}
\begin{bmatrix}
A(\lambda_1(L))^t & & \\
 & \ddots & \\
 & & A(\lambda_{N}(L))^t
\end{bmatrix} P_{\pi} \begin{bmatrix}
\widetilde{U}_j\\
0\\
\end{bmatrix} = \begin{bmatrix}
\widetilde{U}_{j,1} \cdot [A(\lambda_1(L))^t]_{1,1}\\
\widetilde{U}_{j,1} \cdot [A(\lambda_1(L))^t]_{2,1}\\
\vdots\\
\widetilde{U}_{j,N} \cdot [A(\lambda_N(L))^t]_{1,1}\\
\widetilde{U}_{j,N} \cdot [A(\lambda_N(L))^t]_{2,1}\\
\end{bmatrix},
\end{align*}
where $[A(\lambda_{i}(L))^t]_{k,l}$ is the $(k,l)$-th element of the matrix $A(\lambda_{i}(L))^t$. 
Since $P^\top_\pi$ is the inverse of $P_\pi$, we have
\begin{align*}
P_{\pi}^\top \begin{bmatrix}
A(\lambda_1(L))^t & & \\
 & \ddots & \\
 & & A(\lambda_{N}(L))^t
\end{bmatrix} P_{\pi} \begin{bmatrix}
\widetilde{U}_j\\
0\\
\end{bmatrix} = \begin{bmatrix}
\widetilde{U}_{j,1} \cdot [A(\lambda_1(L))^t]_{1,1}\\
\vdots\\
\widetilde{U}_{j,N} \cdot [A(\lambda_N(L))^t]_{1,1}\\
\widetilde{U}_{j,1} \cdot [A(\lambda_1(L))^t]_{2,1}\\
\vdots\\
\widetilde{U}_{j,N} \cdot [A(\lambda_N(L))^t]_{2,1}\\
\end{bmatrix},
\end{align*}
which yields
\begin{align}
\label{eq:bound (1)2}
\begin{split}
\begin{bmatrix}
Q  & 0\\
\end{bmatrix} P_{\pi}^\top \begin{bmatrix}
A(\lambda_1(L))^t & & \\
 & \ddots & \\
 & & A(\lambda_{N}(L))^t
\end{bmatrix} P_{\pi} \begin{bmatrix}
\widetilde{U}_j\\
0\\
\end{bmatrix} = Q \begin{bmatrix}
\widetilde{U}_{j,1} \cdot [A(\lambda_1(L))^t]_{1,1}\\
\vdots\\
\widetilde{U}_{j,N} \cdot [A(\lambda_N(L))^t]_{1,1}\\
\end{bmatrix}.
\end{split} 
\end{align}
Combining Eq.~\eqref{eq:bound (1)1} and \eqref{eq:bound (1)2}, we have
\begin{align*}
\begin{bmatrix}
I_N & 0
\end{bmatrix} \left(\sum^{T-1}_{t=T_0} A^t \right)\begin{bmatrix}
U_{0,j}\\
V_{0,j}
\end{bmatrix} = \sum^{T-1}_{t=T_0} \begin{bmatrix}
I_N & 0
\end{bmatrix} A^t \begin{bmatrix}
U_{0,j}\\
V_{0,j}
\end{bmatrix} = Q \begin{bmatrix}
\widetilde{U}_{j,1} \cdot \sum^{T-1}_{t=T_0}[A(\lambda_1(L))^t]_{1,1}\\
\vdots\\
\widetilde{U}_{j,N} \cdot \sum^{T-1}_{t=T_0}[A(\lambda_N(L))^t]_{1,1}\\
\end{bmatrix},
\end{align*}
which implies that
\begin{align}
\label{eq:bound (1)4}
\begin{split}
\Delta_1 &= \sum^{N}_{i=1} \left(\widetilde{U}_{j,i} \sum^{T-1}_{t=T_0}[A(\lambda_i(L))^t]_{1,1} \right)^2 \\
&= \sum^{N-1}_{i=1}\left[ \widetilde{U}_{j,i}^2 \cdot \left(\sum^{T-1}_{t=T_0}[A(\lambda_i(L))^t]_{1,1} \right)^2 \right] \\
&\overset{(a)}{\leq} \left(\sum^{N-1}_{i=1} \widetilde{U}_{j,i}^2\right) \left[\sum^{T-1}_{t=T_0} \left(1 - \frac{1}{2\sqrt{\kappa(L)}}\right)^t \right]^2 \\
&\overset{(b)}{=} \norm{\widetilde{\mu}_j - \bar{\mu}_j e_N}^2_2 \left(1 - \frac{1}{2\sqrt{\kappa(L)}}\right)^{2T_0} \left[\sum^{T-T_0-1}_{t=0} \left(1 - \frac{1}{2\sqrt{\kappa(L)}}\right)^t \right]^2 \\
&\overset{(c)}{\leq} 4\kappa(L) e^{-\frac{T}{2\sqrt{\kappa(L)}}}\norm{\widetilde{\mu}_j - \bar{\mu}_j e_N}^2_2.
\end{split}
\end{align}
Here step (a) uses part (a) of Lemma \ref{lemma:upper bounds related to A(lambda_i)}, step (b) follows from Eq. \eqref{eq:bound (1)3}, and step (c) follows from the elementary bounds
\begin{align*}
\left(1 - \frac{1}{2\sqrt{\kappa(L)}}\right)^{2T_0} \leq e^{-\frac{T}{2\sqrt{\kappa(L)}}} \text{ and } \sum^{T-T_0-1}_{t=0} \left(1 - \frac{1}{2\sqrt{\kappa(L)}}\right)^t \leq \sum^{\infty}_{t=0} \left(1 - \frac{1}{2\sqrt{\kappa(L)}}\right)^t = 2\sqrt{\kappa(L)}.
\end{align*}

\paragraph{Bounding $\Delta_2$:}
By Lemma \ref{lemma: permutation and block diagonal matrix}, we have
\begin{align}
\label{eq:bound (2)4}
B = \begin{bmatrix}
Q  & 0\\
0  & Q \\
\end{bmatrix} P^\top_{\pi} \begin{bmatrix}
B(\lambda_1(L)) & &\\
 & \ddots & \\
 & & B(\lambda_N(L))
\end{bmatrix} Q^\top.
\end{align}
Combining this with Eq.~\eqref{eq:bound (1)6} yields
\begin{align*}
\eta\begin{bmatrix}
I_N & 0
\end{bmatrix}A^{T_0-1-t} \left(\sum^{T-T_0-1}_{k=0}A^{k}\right) B = \begin{bmatrix}
Q  & 0\\
\end{bmatrix} P_{\pi}^\top \begin{bmatrix}
C(\lambda_1(L)) & &\\
 & \ddots & \\
 & & C(\lambda_N(L))
\end{bmatrix} Q^\top,\\
\end{align*}
where 
\begin{align*}
C(\lambda_i(L), t) := A(\lambda_i(L))^{T_0-1-t} \eta \left(\sum^{T-T_0-1}_{k=0} A(\lambda_i(L))^k \right)B(\lambda_i(L)).
\end{align*}
It is easy to see that $C(\lambda_N(L), t) = \begin{bmatrix}
0\\
0\\
\end{bmatrix}$ for all $t$ because $B(\lambda_N(L)) = \begin{bmatrix}
0\\
0\\
\end{bmatrix}$. Since the spectral radius of $A(\lambda_i(L))$ is upper bounded by $1-\frac{1}{\sqrt{\kappa(L)}} < 1$ for all $i=1,\ldots,N-1$, we have for all $t=0, \ldots, T_0-1$:
\begin{align*}
\eta \left(\sum^{T-T_0-1}_{k=0} A(\lambda_i(L))^k \right) &= \eta \Big(I - A(\lambda_i(L))^{T-T_0}\Big) \Big(I - A(\lambda_i(L))\Big)^{-1}\\
&= \Big(I - A(\lambda_i(L))^{T-T_0}\Big) \frac{1}{ \lambda_i(L)}\begin{bmatrix}
1-\zeta & \zeta^2\\
-\eta \lambda_i(L) & \eta (1+\zeta) \lambda_i(L)\\
\end{bmatrix}.
\end{align*}
Hence, we have for all $i = 1, \ldots, N-1$:
\begin{align*}
&C(\lambda_i(L), t)\\
=& A(\lambda_i(L))^{T_0-1-t}\Big(I - A(\lambda_i(L))^{T-T_0}\Big) \frac{1}{ \lambda_i(L)}\begin{bmatrix}
1-\zeta & \zeta^2\\
-\eta \lambda_i(L) & \eta (1+\zeta) \lambda_i(L)\\
\end{bmatrix}B(\lambda_i(L))\\
=&A(\lambda_i(L))^{T_0-1-t}\Big(I - A(\lambda_i(L))^{T-T_0}\Big) \begin{bmatrix}
1\\
0\\
\end{bmatrix}\\
=&A(\lambda_i(L))^{T_0-1-t}\begin{bmatrix}
1- [A(\lambda_i(L))^{T-T_0}]_{1,1}\\
-[A(\lambda_i(L))^{T-T_0}]_{2,1}\\
\end{bmatrix}\\
=&\begin{bmatrix}
[A(\lambda_i(L))^{T_0-1-t}]_{1,1}\left(1- [A(\lambda_i(L))^{T-T_0}]_{1,1}\right) - [A(\lambda_i(L))^{T_0-1-j}]_{1,2}[A(\lambda_i(L))^{T-T_0}]_{2,1}\\
[A(\lambda_i(L))^{T_0-1-j}]_{2,1}\left(1- [A(\lambda_i(L))^{T-T_0}]_{1,1}\right) - [A(\lambda_i(L))^{T_0-1-j}]_{2,2}[A(\lambda_i(L))^{T-T_0}]_{2,1}\\
\end{bmatrix}.
\end{align*}
Moreover, we have for all $t=0, \ldots, T_0-1$:
\begin{align*}
& \begin{bmatrix}
Q  & 0\\
\end{bmatrix} P_{\pi}^\top \begin{bmatrix}
C(\lambda_1(L), t) & &\\
 & \ddots & \\
 & & C(\lambda_{N}(L), t)
\end{bmatrix} Q^\top = Q \begin{bmatrix}
C(\lambda_1(L), t)_1 & &\\
 & \ddots & \\
 & & C(\lambda_{N}(L), t)_1
\end{bmatrix} Q^\top,
\end{align*}
where $C(\lambda_{i}(L), t)_1$ denotes the first element of $C(\lambda_{i}(L), t)$ for all $i=1,\ldots,N$. Therefore,
\begin{align} \label{eq:bound (2)3}
\begin{split}
& \norm{\eta\begin{bmatrix}
I_N & 0
\end{bmatrix}A^{T_0-1-t} \left(\sum^{T-T_0-1}_{k=0}A^{k}\right) B \sqrt{\widetilde{\Sigma}_j}}^2_F \\
=& \norm{Q \begin{bmatrix}
C(\lambda_1(L), t)_1 & &\\
 & \ddots & \\
 & & C(\lambda_{N}(L), t)_1
\end{bmatrix} Q^\top \sqrt{\widetilde{\Sigma}_j}}^2_F \\
=& ~\text{tr}\left(\underbrace{Q \begin{bmatrix}
[C(\lambda_1(L), t)_1]^2 & &\\
 & \ddots & \\
 & & [C(\lambda_{N}(L), t)_1]^2
\end{bmatrix} Q^\top}_{C(t)} \widetilde{\Sigma}_j\right).
\end{split}
\end{align}
Note that the matrix $C(t)$ has non-negative eigenvalues $[C(\lambda_1(L), t)_1]^2, \ldots, [C(\lambda_N(L), t)_1]^2$. By part (b) of Lemma \ref{lemma:upper bounds related to A(lambda_i)}, we have all $i=1,\cdots,N-1$ and $t = 0, \cdots, T_0-1$:
\begin{align}
\label{eq:bound (2)1}
\abs{C(\lambda_i(L), t)_1} \leq 2\abs{\left(1+\frac{T_0-1-t}{\sqrt{\kappa(L)}+1}\right)\left(1 - \frac{1}{\sqrt{\kappa(L)}}\right)^{T_0-1-t}}.
\end{align}
Using bound \eqref{eq:bound (2)1} and von Neumann's trace inequality then yields that for all $t = 0, \cdots, T_0-1$:
\begin{equation}
\label{eq:bound (2)2}
\begin{split}
\text{tr}\Big(C(t) \widetilde{\Sigma}_j \Big) \leq 4\left[\left(1+\frac{T_0-1-t}{\sqrt{\kappa(L)}+1}\right)\left(1 - \frac{1}{\sqrt{\kappa(L)}}\right)^{T_0-1-t}\right]^2\sum^{N}_{i=1} \sigma^2_{i,j}.
\end{split}
\end{equation}
Combining the pieces, we have
\begin{align}
\label{eq:bound (2)5}
\begin{split}
\Delta_2 &= \sum^{T_0-1}_{t=0} \text{tr}\Big(C(t) \widetilde{\Sigma}_j \Big) \leq 4\sum^{T_0-1}_{t=0} \left[\left(1+\frac{T_0-1-t}{\sqrt{\kappa(L)}+1}\right)\left(1 - \frac{1}{\sqrt{\kappa(L)}}\right)^{T_0-1-t}\right]^2\sum^{N}_{i=1} \sigma^2_{i,j} \\
&\leq 4\sum^{+\infty}_{k=0} \left[\left(1+\frac{k}{\sqrt{\kappa(L)}+1}\right)\left(1 - \frac{1}{\sqrt{\kappa(L)}}\right)^k\right]^2 \sum^{N}_{i=1} \sigma^2_{i,j} \\
&\leq 4\left( k^* + \sqrt{\kappa(L)}\right)\sum^{N}_{i=1} \sigma^2_{i, j},
\end{split}
\end{align}
where the last inequality is due to Lemma \ref{lemma:infinite sum upper bound}.

\paragraph{Bounding $\Delta_3$:}
It follows from Eq.~\eqref{eq:bound (1)6} and \eqref{eq:bound (2)4} that
\begin{align}
\label{eq:bound (3)1}
\begin{split}
\eta\begin{bmatrix}
I_N & 0
\end{bmatrix} \left(\sum^{T-2-t}_{k=0}A^{k}\right) B &= \begin{bmatrix}
Q  & 0\\
\end{bmatrix} P_{\pi}^\top \begin{bmatrix}
D(\lambda_1(L), t) & &\\
 & \ddots & \\
 & & D(\lambda_N(L), t)
\end{bmatrix} Q^\top \\
&= Q \begin{bmatrix}
D(\lambda_1(L), t)_1 & &\\
 & \ddots & \\
 & & D(\lambda_{N}(L), t)_1
\end{bmatrix} Q^\top,
\end{split}
\end{align}
where
\begin{align*}
D(\lambda_i(L), t) = \eta \left(\sum^{T-2-t}_{k=0} A(\lambda_i(L))^k \right)B(\lambda_i(L)) \quad \text{for all } i =1,\ldots,N,
\end{align*}
and $D(\lambda_{i}(L), t)_1$ denotes the first element of $D(\lambda_{i}(L), t)$. It is easy to observe that $D(\lambda_N(L), t) = \begin{bmatrix}
0\\
0\\
\end{bmatrix}$ for all $t$ because $B(\lambda_N(L)) = \begin{bmatrix}
0\\
0\\
\end{bmatrix}$. In addition, for all $t = T_0, \ldots, T-2$ and $i =1,\cdots,N-1$:
\begin{align}
\label{eq:bound (3)2}
\begin{split}
D(\lambda_i(L), t) &= \eta \Big(I - A(\lambda_i(L))^{T-1-t}\Big) \Big(I - A(\lambda_i(L))\Big)^{-1} B(\lambda_i(L)) \\
&= \begin{bmatrix}
1- [A(\lambda_i(L))^{T-1-t}]_{1,1}\\
-[A(\lambda_i(L))^{T-1-t}]_{2,1}\\
\end{bmatrix}.
\end{split}
\end{align}
Thus, combining Eq. \eqref{eq:bound (3)1} and \eqref{eq:bound (3)2}, the following holds for all $t = T_0, \cdots, T-2$:
\begin{align} \label{eq:bound (3)6}
\begin{split}
& \norm{\left[\eta \begin{bmatrix}
I_N & 0
\end{bmatrix}\left( \sum^{T-2-t}_{k=0}A^{k}\right) B - I_N\right]\sqrt{\widetilde{\Sigma}_j}}^2_F \\
=& \norm{Q \begin{bmatrix}
-[A(\lambda_1(L))^{T-1-t}]_{1,1} & & &\\
 & \ddots & &\\
 & & -[A(\lambda_{N-1})^{T-1-t}]_{1,1} &\\
 & & & -1
\end{bmatrix} Q^\top \sqrt{\widetilde{\Sigma}_j}}^2_F \\
=& ~\text{tr}\left(\underbrace{Q \begin{bmatrix}
[A(\lambda_1(L))^{T-1-t}]^2_{1,1} & & &\\
 & \ddots & &\\
 & & [A(\lambda_{N-1})^{T-1-t}]^2_{1,1} &\\
 & & & 1
\end{bmatrix} Q^\top}_{D(t)} \widetilde{\Sigma}_j \right).
\end{split}
\end{align}
Note that the matrix $D(t)$ has non-negative eigenvalues 
\begin{align*}
[A(\lambda_1(L))^{T-1-t}]^2_{1,1}, \ldots, [A(\lambda_{N-1}(L))^{T-1-t}]^2_{1,1} \text{ and } 1.
\end{align*}
By part (a) of Lemma \ref{lemma:upper bounds related to A(lambda_i)}, we have for all $i=1,\cdots,N-1$ and $t = T_0, \cdots, T-2$:
\begin{align}
\label{eq:bound (3)3}
\abs{[A(\lambda_i(L))^{T-1-t}]_{1,1}} \leq \left(1+\frac{T-1-t}{\sqrt{\kappa(L)}+1}\right)\left(1 - \frac{1}{\sqrt{\kappa(L)}}\right)^{T-1-t} < 1.
\end{align}
Using bound \eqref{eq:bound (3)3} and von Neumann's trace inequality again yields the following bound for all $t = T_0, \cdots, T-2$:
\begin{align}
\label{eq:bound (3)5}
\text{tr}\Big(D(t) \widetilde{\Sigma}_j \Big) \leq \max_{i \in \mathcal{N}} \sigma^2_{i,j} + \left[\left(1+\frac{T-1-t}{\sqrt{\kappa(L)}+1}\right)\left(1 - \frac{1}{\sqrt{\kappa(L)}}\right)^{T-1-t}\right]^2\sum^{N}_{i=1} \sigma^2_{i,j}.
\end{align}
Proceeding as before, we have
\begin{align}
\label{eq:bound (3)7}
\begin{split}
\Delta_3 &= \sum^{T-2}_{t=T_0} \text{tr}\Big(D(t) \widetilde{\Sigma}_j \Big) \\
&\leq \frac{T}{2}\max_{i \in \mathcal{N}} \sigma^2_{i,j} + \sum^{+\infty}_{k=0} \left[\left(1+\frac{k}{\sqrt{\kappa(L)}+1}\right)\left(1 - \frac{1}{\sqrt{\kappa(L)}}\right)^k\right]^2 \sum^{N}_{i=1} \sigma^2_{i,j} \\
&\leq \frac{T}{2}\max_{i \in \mathcal{N}} \sigma^2_{i,j} + \left( k^* + \sqrt{\kappa(L)}\right)\sum^{N}_{i=1} \sigma^2_{i,j},
\end{split}
\end{align}
where the last inequality is due to Lemma \ref{lemma:infinite sum upper bound}.

\paragraph{Putting steps together:}
Plugging bounds \eqref{eq:bound (1)4}, \eqref{eq:bound (2)5} and \eqref{eq:bound (3)7} into Eq.~\eqref{eq:mean squared error decomposition}, we have
\begin{align}
\label{eq:mean squared error bound for each j}
\begin{split}
\mathbb{E}\left[\norm{\widehat{\Theta}_{T,j} - \bar{\mu}_j e_N}^2_2\right]
\leq& \frac{16\kappa(L)}{T^2 e^{\frac{T}{2\sqrt{\kappa(L)}}}}\norm{\widetilde{\mu}_j - \bar{\mu}_j e_N}^2_2 \\
&+ \frac{24 \left( k^* + \sqrt{\kappa(L)} \right) \sum^{N}_{i=1} \sigma^2_{i,j}}{T^2} + \frac{2\max_{i \in \mathcal{N}} \sigma^2_{i,j}}{T}.
\end{split}
\end{align}
Since bound \eqref{eq:mean squared error bound for each j} holds for all $j \in \{1, \ldots, n\}$, $\sum^N_{i=1} \norm{\widehat{\theta}_{i,T} - \bar{\mu}}^2_2 = \sum^n_{j=1} \norm{\widehat{\Theta}_{T,j} - \bar{\mu}_j e_N}^2_2$ and \\$\sum^N_{i=1}\norm{\mu_i - \bar{\mu}}^2_2 = \sum^n_{j=1} \norm{\widetilde{\mu}_j - \bar{\mu}_j e_N}^2_2$, we obtain the desired upper bound by summing up both sides of Eq. \eqref{eq:mean squared error bound for each j} over all $j \in \{1, \ldots, n\}$.
\qed

\subsection{Proof of Corollary \ref{corollary:example I}}
\label{sec:proof of corollary 1}
Since $\sigma^2_{i,j}$ is the variance of the $j$-th component of $R_{i,t} = A^\top_i Y_{i,t}$, by Theorem \ref{theorem:upper bound of stochastic dual accelerated method}, we have
\begin{align}
\label{eq:corollary 1 proof (1)}
\begin{split}
\mathbb{E}\left[\sum^N_{i=1} \norm{\widehat{\mu}_{i,T} - \bar{\mu}}^2_2\right] \leq& \mathcal{O}\left(  e^{-\frac{T}{\sqrt{\kappa(L)}}}\sum^N_{i=1}\norm{\mu_i - \bar{\mu}}^2_2 \right) \\
&+ \mathcal{O}\left(\frac{ \sqrt{\kappa(L)} \sum^{N}_{i=1} \sum^n_{j=1} \sigma^2_{i,j}}{T^2} + \frac{\sum^n_{j=1}\max_{i \in \mathcal{N}}\sigma^2_{i,j}}{T}\right).
\end{split}
\end{align}
Moreover, we have
\begin{align}
\label{eq:upper bound inverse of a matrix for corollary 1 (2)}
\sum^N_{i=1} \norm{\left(\widehat{A}_{i,T'}\right)^{-1} - \bar{A}^{-1}}^2_2 &\overset{(a)}{\leq} \sum^N_{i=1} \frac{\norm{\bar{A}^{-1}}^4_2 \norm{\widehat{A}_{i,T'} - \bar{A}}_2^2}{\left(1 - \norm{\bar{A}^{-1} \left(\widehat{A}_{i,T'} - \bar{A}\right)}_2\right)^2} \notag\\
&\overset{(b)}{\leq} 4\norm{\bar{A}^{-1}}^4_2 \sum^N_{i=1} \norm{\widehat{A}_{i,T'} - \bar{A}}^2_2 \notag\\
&\overset{(c)}{\leq} \mathcal{O}\left(e^{-\frac{T'}{\sqrt{\kappa(L)}}} \norm{\bar{A}^{-1}}^4_2\sum^N_{i=1}\norm{A_i^\top A_i - \bar{A}}^2_2\right),
\end{align}
where step (a) applies Lemma \ref{lemma:upper bound inverse of a matrix} with $\norm{\bar{A}^{-1} \left(\widehat{A}_{i,T'} - \bar{A}\right)}_2 < 1$, step(b) uses the inequality $1 - \norm{\bar{A}^{-1} \left(\widehat{A}_{i,T'} - \bar{A}\right)}_2 \geq \frac{1}{2}$, and step (c) follows from bound \eqref{eq:upper bound inverse of a matrix for corollary 1 (1)}. Since we have the error decomposition
\begin{align*}
\widehat{\theta}_{i,T,T'}- \theta^* &= \left(\widehat{A}_{i,T'}\right)^{-1} \widehat{\mu}_{i,T} - \bar{A}^{-1} \bar{\mu}\\
&= \left(\widehat{A}_{i,T'}\right)^{-1} \widehat{\mu}_{i,T} - \bar{A}^{-1} \widehat{\mu}_{i,T} + \bar{A}^{-1} \widehat{\mu}_{i,T} - \bar{A}^{-1} \bar{\mu}\\
&= \left[\left(\widehat{A}_{i,T'}\right)^{-1} - \bar{A}^{-1}\right] \widehat{\mu}_{i,T} + \bar{A}^{-1} \left(\widehat{\mu}_{i,T} - \bar{\mu} \right),
\end{align*}
we obtain the error bound
\begin{align*}
\norm{\widehat{\theta}_{i,T,T'}- \theta^*}^2_2 &\leq 2\norm{\left(\widehat{A}_{i,T'}\right)^{-1} - \bar{A}^{-1}}^2_2 \cdot \norm{\widehat{\mu}_{i,T}}^2_2 + 2\norm{\bar{A}^{-1}}^2_2 \cdot \norm{\widehat{\mu}_{i,T} - \bar{\mu}}^2_2\\
&\leq 4\norm{\left(\widehat{A}_{i,T'}\right)^{-1} - \bar{A}^{-1}}^2_2 \cdot \norm{\widehat{\mu}_{i,T} - \bar{\mu}}^2_2 \\
&\quad+ 4\norm{\left(\widehat{A}_{i,T'}\right)^{-1} - \bar{A}^{-1}}^2_2 \cdot \norm{\bar{\mu}}^2_2 + 2\norm{\bar{A}^{-1}}^2_2 \cdot \norm{\widehat{\mu}_{i,T} - \bar{\mu}}^2_2.
\end{align*}
This in turn yields
\begin{align}
\label{eq:corollary 1 proof (2)}
\begin{split}
\mathbb{E}\left[\sum^N_{i=1} \norm{\widehat{\theta}_{i,T,T'}- \theta^*}^2_2\right] \leq& 4\sum^N_{i=1} \norm{\left(\widehat{A}_{i,T'}\right)^{-1} - \bar{A}^{-1}}^2_2 \cdot \mathbb{E}\left[\norm{\widehat{\mu}_{i,T} - \bar{\mu}}^2_2\right]\\
&+ 4 \norm{\bar{\mu}}^2_2 \sum^N_{i=1} \norm{\left(\widehat{A}_{i,T'}\right)^{-1} - \bar{A}^{-1}}^2_2 + 2\norm{\bar{A}^{-1}}^2_2 \cdot \mathbb{E}\left[\sum^N_{i=1} \norm{\widehat{\mu}^i_T - \bar{\mu}}^2_2\right].
\end{split}
\end{align}
Substituting bounds \eqref{eq:corollary 1 proof (1)} and \eqref{eq:upper bound inverse of a matrix for corollary 1 (2)} into Eq.~\eqref{eq:corollary 1 proof (2)}, we obtain the desired result.
\qed

\subsection{Proof of Corollary \ref{corollary:example II}}
\label{sec:proof of corollary 2}
Recall that the ``plug-in''estimator is given by $\widehat{J}_{i,T} := \left(I - \gamma \widehat{P}_T \right)^{-1} \widehat{r}_{i,T}$ for all $i \in \mathcal{N}$. We have the following error decomposition:
\begin{align}
\label{eq:corollary 2 proof (1)}
\begin{split}
\widehat{J}_{i,T} - J^*  &= \left(I - \gamma \widehat{P}_{T} \right)^{-1} \widehat{r}_{i,T} - J^* \\
&= \left(I - \gamma \widehat{P}_{T} \right)^{-1} \left(\widehat{r}_{i,T} - \bar{r} + \bar{r} \right) - J^* \\
&= \left(I - \gamma \widehat{P}_{T} \right)^{-1} \left(\widehat{r}_{i,T} - \bar{r} \right) + \left[\left(I - \gamma \widehat{P}_{T} \right)^{-1} \bar{r} - J^* \right],
\end{split}
\end{align}
from which it follows that
\begin{align}
\label{eq:corollary 2 proof (5)}
\mathbb{E}\left[\max_{i \in \mathcal{N}}\norm{\widehat{J}_{i,T} - {J^*}}_{\infty} \right] \leq \mathbb{E}\left[\max_{i \in \mathcal{N}}\norm{\left(I - \gamma \widehat{P}_{T} \right)^{-1} \left(\widehat{r}_{i,T} - \bar{r} \right)}_{\infty}\right] + \mathbb{E}\left[\norm{\left(I - \gamma \widehat{P}_{T} \right)^{-1} \bar{r} - J^*}_{\infty}\right].
\end{align}
Now we bound each term on the right-hand side of Eq.~\eqref{eq:corollary 2 proof (5)}. First, we have
\begin{align}
\label{eq:corollary 2 proof (2)}
\begin{split}
\mathbb{E}\left[\max_{i \in \mathcal{N}}\norm{\left(I - \gamma \widehat{P}_{T} \right)^{-1} \left(\widehat{r}_{i,T} - \bar{r} \right)}_{\infty}\right] &\overset{(a)}{\leq} \frac{1}{1-\gamma} \mathbb{E}\left[\max_{i \in \mathcal{N}}\norm{\widehat{r}_{i,T} - \bar{r}}_{\infty}\right] \\
&\leq \frac{1}{1-\gamma} \mathbb{E}\left[\max_{i \in \mathcal{N}}\norm{\widehat{r}_{i,T} - \bar{r}}_{2}\right] \\
&\leq \frac{1}{1-\gamma} \mathbb{E}\left[\sqrt{\sum^N_{i=1} \norm{\widehat{r}_{i,T} - \bar{r}}^2_{2}}\right] \\
&\overset{(b)}{\leq} \frac{1}{1-\gamma} \sqrt{\mathbb{E}\left[\sum^N_{i=1} \norm{\widehat{r}_{i,T} - \bar{r}}^2_{2}\right]},
\end{split}
\end{align}
where step (a) applies Lemma 2.3 of \citet{agarwal2019reinforcement}, and step (b) uses Jensen's inequality. Applying Theorem \ref{theorem:upper bound of stochastic dual accelerated method} to $\mathbb{E}\left[\sum^N_{i=1} \norm{\widehat{r}_{i,T} - \bar{r}}^2_{2}\right]$ then yields
\begin{align}
\label{eq:corollary 2 proof (3)}
\begin{split}
\mathbb{E}\left[\max_{i \in \mathcal{N}}\norm{\left(I - \gamma \widehat{P}_{T} \right)^{-1} \left(\widehat{r}_{i,T} - \bar{r} \right)}_{\infty}\right] & \leq \mathcal{O}\left( e^{-\frac{T}{2\sqrt{\kappa(L)}}} \frac{\sqrt{\sum^N_{i=1}\norm{r_i - \bar{r}}^2_2}}{1 - \gamma}  \right. \\
& \left.+ \frac{\kappa(L)^{1/4} \sqrt{\sum^{N}_{i=1} \sum^n_{j=1} \sigma^2_{i,j}}}{(1 - \gamma)T} + \frac{\sqrt{\sum^n_{j=1}\max_{i \in \mathcal{N}} \sigma^2_{i,j}}}{(1-\gamma)\sqrt{T}}\right).
\end{split}
\end{align}
Next, applying\footnote{Note that the high-probability bound 5(b) from their Theorem 1(b) can be integrated to obtain an analogous bound in expectation.} Theorem 1(b) from \citet{pananjady2020instance}, we have
\begin{align}
\label{eq:corollary 2 proof (4)}
\mathbb{E}\left[\norm{\left(I - \gamma \widehat{P}_{T} \right)^{-1} \bar{r} - J^*}_{\infty}\right] \leq \mathcal{O}\left(\frac{\sqrt{\log n}}{(1 - \gamma)^{3/2} \sqrt{T}} + \frac{\log n}{(1 - \gamma)^2 T} \right).
\end{align}
Finally, substituting bounds \eqref{eq:corollary 2 proof (3)} and \eqref{eq:corollary 2 proof (4)} into Eq.~\eqref{eq:corollary 2 proof (5)} completes the proof. \qed
\section{Technical Lemmas} \label{sec:tech-lemmas}
In this section, we state and prove the lemmas used above.
\begin{lemma}
\label{lemma: permutation and block diagonal matrix}
Let $P_{\pi} \in \{0,1\}^{2N \times 2N}$ be the permutation matrix corresponding to the permutation $\pi: \{1,\cdots,2N\} \rightarrow \{1,\cdots,2N\}$ defined by
\begin{align*}
\pi(i) = \begin{cases}
			\frac{i-1}{2} + 1 & \text{if $i$ is odd},\\
            N+\frac{i}{2} & \text{otherwise}.
		 \end{cases}
\end{align*}
Then, we have
\begin{align*}
A:=\begin{bmatrix}
I_{N} - \eta (1+\zeta) L & \zeta^2  I_{N}\\
-\eta L & \zeta I_{N} \\
\end{bmatrix} = \begin{bmatrix}
Q  & 0\\
0  & Q \\
\end{bmatrix} P^\top_{\pi} \begin{bmatrix}
A(\lambda_1(L)) & &\\
 & \ddots & \\
 & & A(\lambda_{N}(L))
\end{bmatrix} P_{\pi} \begin{bmatrix}
Q^\top  & 0\\
0 & Q^\top \\
\end{bmatrix},
\end{align*}
and
\begin{align*}
B := \begin{bmatrix}
(1+\zeta)  L \\
L
\end{bmatrix} = \begin{bmatrix}
Q  & 0\\
0  & Q \\
\end{bmatrix} P^\top_{\pi} \begin{bmatrix}
B(\lambda_1(L)) & & \\
 & \ddots & \\
 & & B(\lambda_{N}(L))
\end{bmatrix} Q^\top.
\end{align*}
Here
\begin{align*}
A(\lambda_i(L)) := \begin{bmatrix}
1-\eta (1+\zeta) \lambda_i(L) & \zeta^2\\
-\eta \lambda_i(L) & \zeta\\
\end{bmatrix} ~\text{ and }~ B(\lambda_i(L)) := \begin{bmatrix}
(1+\zeta)\lambda_i(L)\\
\lambda_i(L)\\
\end{bmatrix}\quad \text{for } i = 1,\cdots,N.
\end{align*}
\end{lemma}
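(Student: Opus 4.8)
The plan is to use the spectral decomposition $L = Q\Lambda Q^\top$ to simultaneously ``diagonalize'' the four $N\times N$ blocks of $A$ (and the two blocks of $B$) along the eigenvectors of $L$, and then to apply the permutation $\pi$ to regroup the resulting scalars into genuine block-diagonal form, with the $2\times 2$ blocks $A(\lambda_i(L))$ (respectively the $2\times 1$ blocks $B(\lambda_i(L))$).

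First I would conjugate $A$ by the orthogonal matrix $\diag(Q,Q)$. Using $Q^\top Q = I_N$ and $Q^\top L Q = \Lambda$, a block-wise computation gives
\[
\begin{bmatrix} Q^\top & 0 \\ 0 & Q^\top \end{bmatrix} A \begin{bmatrix} Q & 0 \\ 0 & Q \end{bmatrix} = \begin{bmatrix} I_N - \eta(1+\zeta)\Lambda & \zeta^2 I_N \\ -\eta\Lambda & \zeta I_N \end{bmatrix} =: M,
\]
and each of the four $N\times N$ blocks of $M$ is diagonal. The same transformation applied to $B$ yields $\diag(Q^\top, Q^\top)\, B\, Q = \begin{bmatrix} (1+\zeta)\Lambda \\ \Lambda \end{bmatrix} =: \widetilde{B}$, whose two stacked $N\times N$ blocks are again diagonal; note that here only a single $Q^\top$ appears on the right, because $B$ has $2N$ rows but only $N$ columns, so only the row index carries the ``doubled'' structure.

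Next I would track the action of $P_\pi$ entry by entry, with the convention that $P_\pi$ acts on vectors by $(P_\pi v)_i = v_{\pi(i)}$, so that $(P_\pi M P_\pi^\top)_{ik} = M_{\pi(i),\pi(k)}$. Since $\pi$ sends the odd index $2a-1$ to $a \in \{1,\dots,N\}$ and the even index $2a$ to $N+a$, a short four-case check (odd/even $i$ against odd/even $k$) shows that $(P_\pi M P_\pi^\top)_{ik}$ vanishes unless $\lceil i/2\rceil = \lceil k/2\rceil$, and that the $2\times 2$ diagonal block indexed by $a$ equals
\[
\begin{bmatrix} 1 - \eta(1+\zeta)\lambda_a(L) & \zeta^2 \\ -\eta\lambda_a(L) & \zeta \end{bmatrix} = A(\lambda_a(L)),
\]
the off-diagonal blocks vanishing precisely because each block of $M$ is diagonal. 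Hence $M = P_\pi^\top\, \diag\big(A(\lambda_1(L)),\dots,A(\lambda_N(L))\big)\, P_\pi$. An entirely analogous but simpler row permutation applied to $\widetilde{B}$ --- where $P_\pi$ now only permutes rows, since $\widetilde{B}$ has $N$ columns --- shows that column $a$ of $P_\pi \widetilde{B}$ has its only nonzero entries in rows $2a-1$ and $2a$, equal to $(1+\zeta)\lambda_a(L)$ and $\lambda_a(L)$ respectively, so $\widetilde{B} = P_\pi^\top\, \diag\big(B(\lambda_1(L)),\dots,B(\lambda_N(L))\big)$.

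Finally I would reassemble the two claimed identities by undoing the conjugations:
\[
A = \begin{bmatrix} Q & 0 \\ 0 & Q\end{bmatrix} M \begin{bmatrix} Q^\top & 0 \\ 0 & Q^\top\end{bmatrix} = \begin{bmatrix} Q & 0 \\ 0 & Q\end{bmatrix} P_\pi^\top\, \diag\big(A(\lambda_i(L))\big)\, P_\pi \begin{bmatrix} Q^\top & 0 \\ 0 & Q^\top\end{bmatrix},
\]
and similarly $B = \diag(Q,Q)\, \widetilde{B}\, Q^\top = \diag(Q,Q)\, P_\pi^\top\, \diag\big(B(\lambda_i(L))\big)\, Q^\top$. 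The only real difficulty here is bookkeeping: fixing the orientation convention for $P_\pi$ consistently and carrying out the case analysis that matches the permuted entries to the blocks $A(\lambda_i(L))$ and $B(\lambda_i(L))$ --- everything else is routine block-matrix algebra.
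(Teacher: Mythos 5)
Your proposal is correct and follows essentially the same route as the paper's proof: conjugating $A$ (resp.\ multiplying $B$) by $\mathrm{diag}(Q,Q)$ using $L = Q\Lambda Q^\top$, and then showing that $P_\pi$ regroups the resulting diagonal-blocked matrices into the block-diagonal forms $\mathrm{diag}\big(A(\lambda_i(L))\big)$ and $\mathrm{diag}\big(B(\lambda_i(L))\big)$. The only difference is that you spell out the entrywise permutation bookkeeping (the four-case check) that the paper dismisses as ``straightforward calculations,'' and your conventions for $P_\pi$ are consistent with the paper's definition, so the verification goes through as claimed.
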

\begin{proof}
Since $L = Q \Lambda Q^\top$, we have
\begin{align}
\label{eq:lemmq1 1}
A= \begin{bmatrix}
Q  & 0\\
0  & Q \\
\end{bmatrix}\begin{bmatrix}
I_N - \eta (1+\zeta) \Lambda  & \zeta^2 I_N\\
-\eta \Lambda  & \zeta I_N \\
\end{bmatrix}\begin{bmatrix}
Q^\top  & 0\\
0 & Q^\top \\
\end{bmatrix}.
\end{align}
and
\begin{align}
\label{eq:lemmq1 2}
B = \begin{bmatrix}
Q  & 0\\
0  & Q \\
\end{bmatrix} \begin{bmatrix}
(1+\zeta) \Lambda\\
\Lambda\\
\end{bmatrix} Q^\top.
\end{align}
Since the permutation matrix $P_{\pi} \in \{0,1\}^{2N \times 2N}$ is obtained by permuting the rows of the identity matrix $I_{2N}$ according to $\pi$; that is, each entry of $P_\pi$ is given by
\begin{align*}
P_{\pi}(i,j) = \begin{cases}
			1 & \text{if } j=\pi(i),\\
            0 & \text{otherwise,}
		 \end{cases} \quad \text{for } i, j =1,\ldots,N, 
\end{align*}
one can verify by straightforward calculations that
\begin{align}
\label{eq:lemmq1 3}
\begin{bmatrix}
I_N - \eta (1+\zeta) \Lambda  & \zeta^2 I_N\\
-\eta \Lambda  & \zeta I_N \\
\end{bmatrix} = P_{\pi}^\top \begin{bmatrix}
A(\lambda_1(L)) & &\\
 & \ddots & \\
 & & A(\lambda_{N}(L))
\end{bmatrix} P_{\pi} ,
\end{align}
and
\begin{align}
\label{eq:lemmq1 4}
\begin{bmatrix}
(1+\zeta) \Lambda\\
\Lambda\\
\end{bmatrix} = P_{\pi}^\top \begin{bmatrix}
B(\lambda_1(L)) & & \\
 & \ddots & \\
 & & B(\lambda_{N}(L))
\end{bmatrix}.
\end{align}
Finally, substituting Eq.~\eqref{eq:lemmq1 3} into Eq.~\eqref{eq:lemmq1 1}, and Eq.~\eqref{eq:lemmq1 4} into Eq.~\eqref{eq:lemmq1 2} completes the proof.
\end{proof}

\begin{lemma}
\label{lemma:upper bounds related to A(lambda_i)}
Let $\eta = \frac{1}{\lambda_1(L)}$ and $\zeta = \frac{\sqrt{\kappa(L)} - 1}{\sqrt{\kappa(L)} + 1}$. For each $i=1,\cdots,N-1$, we have for any integers $j \geq k \geq 0$:
\begin{itemize}
    \item[(a)] \begin{align}
    \label{eq:lemmq2 (a)}
\abs{[A(\lambda_i(L))^k]_{1,1}} \leq \left(1+\frac{k}{\sqrt{\kappa(L)}+1}\right)\left(1 - \frac{1}{\sqrt{\kappa(L)}}\right)^k,
\end{align}
    \item[(b)] \begin{align}
    \label{eq:lemmq2 (b)}
\abs{[A(\lambda_i(L))^k]_{1,2} \cdot [A(\lambda_i(L))^j]_{2,1}} \leq \left(1+\frac{k}{\sqrt{\kappa(L)}+1}\right)\left(1 - \frac{1}{\sqrt{\kappa(L)}}\right)^k,
\end{align}
\end{itemize}
where $A(\lambda_i(L))$ is defined in Lemma \ref{lemma: permutation and block diagonal matrix}, and $[A(\lambda_i(L))^k]_{l,m}$ denotes the $(l,m)$-th element of the matrix $A(\lambda_i(L))^k$.
\end{lemma}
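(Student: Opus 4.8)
The plan is to reduce everything to the analysis of a single $2\times 2$ matrix and its powers, diagonalise it, and then carefully estimate the resulting trigonometric expressions. Fix $i\in\{1,\dots,N-1\}$ and write $\kappa=\kappa(L)$, $\rho^\star:=1-\tfrac1{\sqrt\kappa}$, and $\mu:=\eta\lambda_i(L)=\lambda_i(L)/\lambda_1(L)$, so that $\mu\in[1/\kappa,1]$ by the choice $\eta=1/\lambda_1(L)$ and the ordering of the spectrum. Then $A(\lambda_i(L))=\begin{bmatrix}1-(1+\zeta)\mu&\zeta^2\\-\mu&\zeta\end{bmatrix}$ with $\zeta=\tfrac{\sqrt\kappa-1}{\sqrt\kappa+1}$. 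The first step is the eigenvalue structure: the characteristic polynomial is $x^2-(1+\zeta)(1-\mu)x+\zeta(1-\mu)$, whose discriminant equals $(1-\mu)\big[(1+\zeta)^2(1-\mu)-4\zeta\big]$; since $\tfrac{4\zeta}{(1+\zeta)^2}=1-\tfrac1\kappa$, this discriminant is $\le 0$ for $\mu\in[1/\kappa,1]$ and vanishes exactly at the ill-conditioned endpoint $\mu=1/\kappa$. Hence the eigenvalues are complex conjugates $x=re^{i\phi}$, $\bar x=re^{-i\phi}$ with $r^2=\zeta(1-\mu)\le\zeta(1-1/\kappa)=(\rho^\star)^2$ and $2r\cos\phi=(1+\zeta)(1-\mu)$; a short computation yields the clean identities $\sin^2\phi=\tfrac{\kappa\mu-1}{\kappa-1}$, $\cos^2\phi=\tfrac{\kappa(1-\mu)}{\kappa-1}$, and $r=\rho^\star\cos\phi$ (taking $\phi\in(0,\tfrac\pi2)$).

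Next, the Cayley--Hamilton identity for $2\times2$ matrices gives $A(\lambda_i(L))^k=a_kA(\lambda_i(L))-\zeta(1-\mu)a_{k-1}I$ with $a_k:=\tfrac{x^k-\bar x^k}{x-\bar x}=r^{k-1}\tfrac{\sin k\phi}{\sin\phi}$ and $a_0=0$, $a_1=1$. Reading off entries, $[A(\lambda_i(L))^k]_{1,2}=\zeta^2a_k$, $[A(\lambda_i(L))^k]_{2,1}=-\mu a_k$, and rearranging $[A(\lambda_i(L))^k]_{1,1}=(1-(1+\zeta)\mu)a_k-\zeta(1-\mu)a_{k-1}$ via the addition formula produces the convenient closed form $[A(\lambda_i(L))^k]_{1,1}=r^k\cos k\phi-\delta\,r^{k-1}\tfrac{\sin k\phi}{\sin\phi}$, where $\delta:=\zeta-r\cos\phi=\tfrac{\sqrt\kappa\mu-1}{\sqrt\kappa+1}$; moreover $\delta^2+r^2\sin^2\phi=\zeta^2\mu$, so equivalently $[A(\lambda_i(L))^k]_{1,1}=\tfrac{\zeta\sqrt\mu\,r^{k-1}}{\sin\phi}\cos(k\phi+\psi)$ for a phase $\psi$ determined by $\delta$ and $r\sin\phi$.

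For part (a) I would bound $[A(\lambda_i(L))^k]_{1,1}$ by splitting the range of $\mu$, since the two terms of $r^k\cos k\phi-\delta r^{k-1}\tfrac{\sin k\phi}{\sin\phi}$ partially cancel and the bound is in fact tight precisely at $\mu=1/\kappa$ (where $A(\lambda_i(L))$ is defective). In the first regime, $\mu$ close to $1/\kappa$ — concretely $\mu\le\tfrac{2\sqrt\kappa-1}{\kappa}$, which forces $|\delta|\le\tfrac{\rho^\star}{\sqrt\kappa+1}$ — I use $r\le\rho^\star$ and $\big|\tfrac{\sin k\phi}{\sin\phi}\big|\le k$ to get $|[A(\lambda_i(L))^k]_{1,1}|\le r^k+|\delta|kr^{k-1}\le(\rho^\star)^k+\tfrac{k}{\sqrt\kappa+1}(\rho^\star)^k$, exactly the claim. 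In the complementary regime $\sin^2\phi\ge\tfrac2{\sqrt\kappa+1}$ and $\cos^2\phi\le\zeta$, so the $\cos(k\phi+\psi)$ form together with $r^{k-1}=(\rho^\star)^{k-1}\cos^{k-1}\phi\le(\rho^\star)^{k-1}\zeta^{(k-1)/2}$ lets the geometric factor $\zeta^{(k-1)/2}$ absorb the polynomial growth; the remaining moderate values of $k$, for which this last estimate is loose, are treated by a direct argument on the explicit entries, e.g.\ $[A(\lambda_i(L))^1]_{1,1}=1-(1+\zeta)\mu$ and $[A(\lambda_i(L))^2]_{1,1}=(1-(1+\zeta)\mu)^2-\zeta^2\mu$, whose suprema over $\mu\in[1/\kappa,1]$ are attained at $\mu=1/\kappa$ and equal the target.

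For part (b) the closed forms give $[A(\lambda_i(L))^k]_{1,2}\,[A(\lambda_i(L))^j]_{2,1}=-\zeta^2\mu\,a_ka_j=-\zeta^2\mu\,r^{k+j-2}\tfrac{\sin k\phi\,\sin j\phi}{\sin^2\phi}$, and the decisive feature is the coupling between the sizes of $a_k,a_j$ and the prefactor $\mu$: when $\phi$ is small (so $|a_k|,|a_j|$ are largest) the factor $\mu$ is near $1/\kappa$. I would close this either by repeating the two-regime analysis directly on this product, using $j\ge k$ to trade $r^{j-k}\le1$, or by bootstrapping from part (a): from $A^{k+j}=A^kA^j$ one has $[A(\lambda_i(L))^k]_{1,2}[A(\lambda_i(L))^j]_{2,1}=[A(\lambda_i(L))^{k+j}]_{1,1}-[A(\lambda_i(L))^k]_{1,1}[A(\lambda_i(L))^j]_{1,1}$, and applying part (a) to each of the three terms, together with the elementary monotonicity fact that $\big(1+\tfrac{m}{\sqrt\kappa+1}\big)(\rho^\star)^m\le1$ for every integer $m\ge0$, collapses the product bound to the desired form.

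The main obstacle is the second regime in the part-(a) argument and its analogue for part (b): away from the triangle-inequality-friendly neighbourhood of $\mu=1/\kappa$, the crude estimate $|\sin m\phi|\le m|\sin\phi|$ overshoots by a factor of order $\sqrt\kappa$, so one must genuinely exploit the oscillation (the $\cos(k\phi+\psi)$ factor, equivalently $|\sin m\phi|\le1$) and balance it against the geometric factor $\cos^{k-1}\phi=r^{k-1}/(\rho^\star)^{k-1}$; making the absolute constants land exactly on $\big(1+\tfrac{k}{\sqrt\kappa+1}\big)(\rho^\star)^k$ requires a careful choice of the split point and a separate, direct treatment of the low- and moderate-$k$ cases.
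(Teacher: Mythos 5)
Your reduction, closed forms and identities are all correct, and they are exactly the objects the paper works with: the paper also reduces to the $2\times 2$ matrix $A(\lambda_i(L))$, treats the degenerate eigenvalues $\lambda_{N-1}(L)$ and $\lambda_1(L)$ separately, and in the generic case writes $[A(\lambda_i(L))^k]_{1,1}=r^k\cos k\phi-\delta r^{k-1}\tfrac{\sin k\phi}{\sin\phi}$ just as you do. Where you diverge is the finish: the paper applies $\abs{\sin k\phi}\le k\sin\phi$ on the whole interval and argues, via a quadratic $f$, that $\abs{\delta}/r\le\tfrac{1}{\sqrt{\kappa}+1}$ uniformly for $\lambda_{N-1}(L)<\lambda_i(L)<\lambda_1(L)$; you instead split at $\mu\approx 2/\sqrt{\kappa}$ and switch to the amplitude--phase form beyond the split. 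Your instinct is sound (the second root of that quadratic sits at $\mu=\tfrac{2}{\sqrt{\kappa}+1}$, essentially your split point, so the small-$\delta$ mechanism genuinely lives only in your first regime), and your regime-1 argument ($\abs{\delta}\le\tfrac{\rho^\star}{\sqrt{\kappa}+1}$, $r\le\rho^\star$, triangle inequality) is complete and lands exactly on the stated bound.

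The gaps are in what comes after. For part (a), regime 2 as you describe it does not close: bounding the amplitude only through $\sin^2\phi\ge\tfrac{2}{\sqrt{\kappa}+1}$ gives an amplitude of order $\kappa^{1/4}$, and the decay $\cos^{k-1}\phi\le\zeta^{(k-1)/2}$ only beats $(\rho^\star)^k$ once $k\gtrsim\sqrt{\kappa}\log\kappa$; the leftover ``moderate $k$'' range grows with $\kappa$ and cannot be dispatched by checking $k=1,2$ explicitly. The fix is already in your own identity $\delta^2+r^2\sin^2\phi=\zeta^2\mu$: the amplitude is exactly $\tfrac{\zeta\sqrt{\mu}}{\sin\phi}=\zeta\sqrt{\tfrac{\mu(\kappa-1)}{\kappa\mu-1}}$, and in regime 2 combining $\mu\le 1$ with $\kappa\mu-1\ge 2(\sqrt{\kappa}-1)$ bounds it by $\zeta\sqrt{1+\tfrac{1}{2(\sqrt{\kappa}-1)}}$, an $O(1)$ quantity; then one factor $\cos\phi\le\sqrt{\zeta}$ suffices for every $k\ge 2$, with $k=0,1$ handled directly. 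You must couple $\sqrt{\mu}$ with $1/\sin\phi$ rather than bound them separately. For part (b), the bootstrap via $[A^{k+j}]_{1,1}=[A^k]_{1,1}[A^j]_{1,1}+[A^k]_{1,2}[A^j]_{2,1}$ and $(1+\tfrac{m}{\sqrt{\kappa}+1})(\rho^\star)^m\le 1$ only yields $2\bigl(1+\tfrac{k}{\sqrt{\kappa}+1}\bigr)(\rho^\star)^k$, i.e.\ the lemma with an extra factor of $2$, not the stated inequality (and that factor would propagate into the theorem's constants). The alternative of ``repeating the two-regime analysis on the product'' also needs more than the same split with the same crude bounds: near the top of your regime 1, where $\mu\asymp\kappa^{-1/2}$, the prefactor $\zeta^2\mu$ is only $O(\kappa^{-1/2})$, and using $\abs{a_m}\le m r^{m-1}$ on both factors overshoots by a power of $\kappa$ when $k,j\asymp\sqrt{\kappa}$; you need the interpolated bound $\abs{a_m}\le\min\{m,1/\sin\phi\}\,r^{m-1}$ (equivalently, the same $\sqrt{\mu}/\sin\phi$ compensation as above) and a careful pairing of the two factors with $\zeta^2\mu$. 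You correctly identify this coupling as the decisive feature, but the proposal does not carry it out, so as written parts (a)-regime-2 and (b) remain genuinely open.
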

\begin{proof}
For each $i=1,\ldots,N-1$, the characteristic polynomial of $A(\lambda_i(L))$ is given by
\begin{align*}
p_i(\lambda) = \text{det}\big(\lambda I - A(\lambda_i(L)) \big) = \lambda^{2}-(1+\zeta)(1-\eta \lambda_i(L)) \lambda+\zeta(1-\eta \lambda_i(L)),
\end{align*}
with discriminant
\begin{align*}
\Delta_i = (1-\eta \lambda_i(L)) \left[(1+\zeta)^2(1-\eta \lambda_i(L)) - 4\zeta \right].
\end{align*}
We split the rest of the proof into three cases.

\paragraph{Case 1: $\lambda_i(L) = \lambda_{N-1}(L)$.} Since $\Delta_i = 0$ in this case, $A(\lambda_{i}(L))$ has two equal eigenvalues given by
\begin{align*}
\lambda_{i,1} = \lambda_{i,2} =1 - \frac{1}{\sqrt{\kappa(L)}}.
\end{align*}
The Jordan normal form of $A(\lambda_i(L))$ is
\begin{align*}
A(\lambda_{i}) &= \frac{1}{\kappa(L)}\begin{bmatrix}
\sqrt{\kappa(L)} \frac{\sqrt{\kappa(L)} - 1}{\sqrt{\kappa(L)} + 1}& \kappa(L)\\
-1 & 0\\
\end{bmatrix} \begin{bmatrix}
1 - \frac{1}{\sqrt{\kappa(L)}} & 1\\
0 & 1 - \frac{1}{\sqrt{\kappa(L)}}\\
\end{bmatrix} \begin{bmatrix}
0 & -\kappa(L)\\
1 & \sqrt{\kappa(L)} \frac{\sqrt{\kappa(L)} - 1}{\sqrt{\kappa(L)} + 1}\\
\end{bmatrix},
\end{align*}
thus for any integer $k \geq 0$, we have
\begin{align*}
A(\lambda_{i}(L))^k & = \begin{bmatrix}
\left(1+\frac{k}{\sqrt{\kappa(L)}+1}\right)\left(1 - \frac{1}{\sqrt{\kappa(L)}}\right)^k & k \left(\frac{\sqrt{\kappa(L)} - 1}{\sqrt{\kappa(L)} + 1}\right)^2 \left(1 - \frac{1}{\sqrt{\kappa(L)}}\right)^{k-1}\\
-\frac{k}{\kappa(L)} \left(1 - \frac{1}{\sqrt{\kappa(L)}}\right)^{k-1}& \left(1-\frac{k}{\sqrt{\kappa(L)}+1}\right)\left(1 - \frac{1}{\sqrt{\kappa(L)}}\right)^k
\end{bmatrix}.
\end{align*}
Consequently,
\begin{align*}
\label{eq:case 1(a)}
\abs{[A(\lambda_i(L))^k]_{1,1}} \leq \left(1+\frac{k}{\sqrt{\kappa(L)}+1}\right)\left(1 - \frac{1}{\sqrt{\kappa(L)}}\right)^k.
\end{align*}
Moreover, for any integers $j \geq k \geq 0$,
\begin{align*}
\abs{[A(\lambda_i(L))^k]_{1,2} \cdot [A(\lambda_i(L))^j]_{2,1}} \overset{(a)}{\leq} \frac{k^2}{\kappa(L)} \left(1 - \frac{1}{\sqrt{\kappa(L)}}\right)^{j+k} \overset{(b)}{\leq} \left(1+\frac{k}{\sqrt{\kappa(L)}+1}\right)\left(1 - \frac{1}{\sqrt{\kappa(L)}}\right)^k.
\end{align*}
where step (a) uses the inequality 
\begin{align*}
\frac{\sqrt{\kappa(L)} - 1}{\sqrt{\kappa(L)} + 1} \leq \frac{\sqrt{\kappa(L)} - 1}{\sqrt{\kappa(L)}} = 1 - \frac{1}{\sqrt{\kappa(L)}},
\end{align*}
and step (b) is due to the bound
\begin{align*}
\frac{k^2}{\kappa(L)} \left(1 - \frac{1}{\sqrt{\kappa(L)}}\right)^{j} = \frac{k}{\sqrt{\kappa(L)}} \cdot \frac{k}{\sqrt{\kappa(L)}} e^{- \frac{j}{\sqrt{\kappa(L)}}} \leq \left(1+\frac{k}{\sqrt{\kappa(L)}+1}\right).
\end{align*}
\paragraph{Case 2: $\lambda_i(L) = \lambda_1(L)$.} Since $\Delta_i = 0$ in this case, $A(\lambda_i(L))$ has equal eigenvalues given by
\begin{align*}
\lambda_{i,1} = \lambda_{i,2} = 0.
\end{align*}
The Jordan normal form of $A(\lambda_i(L))$ is
\begin{align*}
A(\lambda_i(L)) = \begin{bmatrix}
\zeta & -1\\
1 & 0\\
\end{bmatrix} \begin{bmatrix}
0 & 1\\
0 & 0\\
\end{bmatrix} \begin{bmatrix}
0 & 1\\
-1 & \zeta\\
\end{bmatrix};
\end{align*}
therefore
\begin{align*}
A(\lambda_i(L)) = \begin{bmatrix}
-\zeta & \zeta^2\\
-1 & \zeta\\
\end{bmatrix}, ~\text{ and }~ A(\lambda_i(L))^k = \begin{bmatrix}
0 & 0\\
0 & 0\\
\end{bmatrix} \text{ for any integer } k \geq 2.
\end{align*}
Some straightforward calculations then yield that for any integers $j \geq k \geq 0$:
\begin{align*}
\abs{[A(\lambda_i(L))^k]_{1,1}} \leq \left(1+\frac{k}{\sqrt{\kappa(L)}+1}\right)\left(1 - \frac{1}{\sqrt{\kappa(L)}}\right)^k,
\end{align*}
and
\begin{align*}
\abs{[A(\lambda_i(L))^k]_{1,2} \cdot [A(\lambda_i(L))^j]_{2,1}} \leq \left(1+\frac{k}{\sqrt{\kappa(L)}+1}\right)\left(1 - \frac{1}{\sqrt{\kappa(L)}}\right)^k.
\end{align*}

\paragraph{Case 3: $\lambda_{N-1}(L) < \lambda_i(L) < \lambda_1(L)$.} Since
\begin{align*}
\Delta_i=\frac{4(1-\frac{\lambda_i(L)}{\lambda_1})\left(1- \frac{\lambda_i(L)}{\lambda_{N-1}}\right)}{\left(\sqrt{\kappa(L)}+1\right)^2} < 0
\end{align*}
in this case, $A(\lambda_{i}(L))$ has two complex eigenvalues given by
\begin{align*}
\lambda_{i,1} = \frac{(1+\zeta)(1-\eta \lambda_i(L))}{2}+\frac{\sqrt{-\Delta_i}}{2}i =\sqrt{\zeta\left(1-\eta \lambda_i(L) \right)}e^{i\phi_i},
\end{align*}
and
\begin{equation*}
\begin{split}
\lambda_{i,2} = \frac{(1+\zeta)(1-\eta \lambda_i(L))}{2}-\frac{\sqrt{-\Delta_i}}{2}i =\sqrt{\zeta\left(1-\eta \lambda_i(L) \right)}e^{-i\phi_i},
\end{split}
\end{equation*}
where $\phi_i$ satisfies
\begin{align*}
\cos{\phi_i} = \frac{(1+\zeta)(1-\eta \lambda_i(L))}{2\sqrt{\zeta\left(1-\eta \lambda_i(L) \right)}} ~\text{ and }~ \sin{\phi_i} = \frac{\sqrt{-\Delta_i}}{2\sqrt{\zeta\left(1-\eta \lambda_i(L) \right)}}.
\end{align*}
Thus, $A(\lambda_i(L)) = V(\lambda_i(L)) C(\lambda_i(L)) V(\lambda_i(L))^{-1}$, where
\begin{align*}
C(\lambda_i(L)) = \sqrt{\zeta\left(1-\eta \lambda_i(L) \right)}\begin{bmatrix}
\cos{\phi_i} & -\sin{\phi_i}\\
\sin{\phi_i} & \cos{\phi_i}\\
\end{bmatrix},
\end{align*}

\begin{align*}
V(\lambda_i(L)) = \begin{bmatrix}
\zeta-\frac{(1+\zeta)(1-\eta \lambda_i(L))}{2} & \frac{\sqrt{-\Delta_i}}{2}\\
\eta \lambda_i(L) & 0\\
\end{bmatrix},
\end{align*}
and
\begin{align*}
V(\lambda_i(L))^{-1} = \frac{-2}{\eta \lambda_i(L)\sqrt{-\Delta_i}}\begin{bmatrix}
0 & -\frac{\sqrt{-\Delta_i}}{2}\\
-\eta \lambda_i(L) & \zeta-\frac{(1+\zeta)(1-\eta \lambda_i(L))}{2}\\
\end{bmatrix}.
\end{align*}
So we have for any integer $k \geq 0$:
\begin{align*}
\begin{split}
&\quad A(\lambda_i(L))^k = \\
&\left(\sqrt{\zeta\left(1-\eta \lambda_i(L) \right)}\right)^k \begin{bmatrix}
  \cos{(k\phi_i)} + \frac{\frac{(1+\zeta)(1-\eta \lambda_i(L))}{2} - \zeta}{\frac{\sqrt{-\Delta_i}}{2}} \sin{(k\phi_i)}& \frac{\zeta^2}{\frac{\sqrt{-\Delta_i}}{2}}\sin{(k\phi_i)}\\
\frac{-\eta \lambda_i(L)}{\frac{\sqrt{-\Delta_i}}{2}} \sin{(k\phi_i)} & \cos{(k\phi_i)} - \frac{\frac{(1+\zeta)(1-\eta \lambda_i(L))}{2} - \zeta}{\frac{\sqrt{-\Delta_i}}{2}} \sin{(k\phi_i)}\\
\end{bmatrix}.
\end{split}
\end{align*}
Since 
\begin{align*}
\abs{\left(\sqrt{\zeta\left(1-\eta \lambda_i(L) \right)}\right)^k} &\leq \left(1 - \frac{1}{\sqrt{\kappa(L)}} \right)^{k},
\end{align*}
and
\begin{align*}
\abs{\cos{(k\phi_i)} + \frac{\frac{(1+\zeta)(1-\eta \lambda_i(L))}{2} - \zeta}{\frac{\sqrt{-\Delta_i}}{2}} \sin{(k\phi_i)}} &\leq 1 + \abs{\frac{\frac{(1+\zeta)(1-\eta \lambda_i(L))}{2} - \zeta}{\sqrt{\zeta\left(1-\eta \lambda_i(L) \right)}}} \abs{\frac{\sin{(k\phi_i)}}{\sin{(\phi_i)}}},
\end{align*}
we have that for any integer $k \geq 1$:
\begin{align*}
\abs{[A(\lambda_i(L))^k]_{1,1}} &\leq \left(1 - \frac{1}{\sqrt{\kappa(L)}} \right)^{k-1} \left(1 + \abs{\frac{\frac{(1+\zeta)(1-\eta \lambda_i(L))}{2} - \zeta}{\sqrt{\zeta\left(1-\eta \lambda_i(L) \right)}}} \abs{\frac{\sin{(k\phi_i)}}{\sin{(\phi_i)}}}\right).
\end{align*}
Since for any integer $k \geq 1$:
\begin{align*}
\sin{(k\phi_i)} = \left(\sum^{k-1}_{j=0} \cos{(j \cdot \phi_i)} \cdot \cos{(\phi_i)}^{k-1-j} \right) \cdot \sin{(\phi_i)},
\end{align*}
then we have
\begin{align*}
 \abs{\frac{\sin{(k\phi_i)}}{\sin{(\phi_i)}}} &= \abs{\sum^{k-1}_{j=0} \cos{(j \cdot \phi_i)} \cdot \cos{(\phi_i)}^{k-1-j}} \leq k.
\end{align*}
Now define the function
\begin{align*}
\begin{split}
f(x) :&= \left[\left(\frac{(1+\zeta)(1-\eta x)}{2} - \zeta\right)\left(\sqrt{\kappa(L)}+1\right)\right]^2-\zeta\left(1-\eta x \right)\\
&= \frac{\kappa(L)}{\lambda_1(L)^2}x^2 + \left[\frac{\sqrt{\kappa(L)} - 1}{\left(\sqrt{\kappa(L)} + 1\right)\lambda_1(L)} - \frac{2\sqrt{\kappa(L)}}{\lambda_1(L)}\right]x + 1-\frac{\sqrt{\kappa(L)} - 1}{\sqrt{\kappa(L)} + 1}.
\end{split}
\end{align*}
Since $f(\lambda_1(L)) = 0$, $f(\lambda_{N-1}(L)) = 0$, and $\frac{\kappa(L)}{\lambda_1(L)^2} > 0$, we have $f(\lambda_i(L)) < 0$ when $\lambda_{N-1}(L) < \lambda_i(L) < \lambda_1(L)$. 
Consequently, for any integers $j \geq k \geq 0$, we have
\begin{align*}
\abs{[A(\lambda_i(L))^k]_{1,1}} \leq \left(1+\frac{k}{\sqrt{\kappa(L)}+1}\right)\left(1 - \frac{1}{\sqrt{\kappa(L)}}\right)^k,
\end{align*}
and
\begin{align*}
\begin{split}
&\abs{[A(\lambda_i(L))^k]_{1,2} \cdot [A(\lambda_i(L))^j]_{2,1}} \\
\leq & \abs{\left(\sqrt{\zeta\left(1-\eta \lambda_i(L) \right)}\right)^{j-1}\eta \lambda_i(L)\zeta \frac{\sin{(k\phi_i)}}{\sin{(\phi_i)}} \frac{\sin{(j\phi_i)}}{\sin{(\phi_i)}}} \left(1 - \frac{1}{\sqrt{\kappa(L)}} \right)^{k}\\
\leq &  \left(1+\frac{k}{\sqrt{\kappa(L)}+1}\right)\left(1 - \frac{1}{\sqrt{\kappa(L)}}\right)^k.
\end{split}
\end{align*}

Finally, since we have shown that bounds \eqref{eq:lemmq2 (a)} and \eqref{eq:lemmq2 (b)} hold for all $i \in \mathcal{N}$ and integers $j \geq k \geq 0$, the proof is complete.
\end{proof}

\begin{lemma}
\label{lemma:k^* upper bound}
Let $k^*$ be defined according to Definition \ref{definition:k^*}. Then there exists an absolute constant $C \geq 1$ such that $k^* \leq C \cdot \sqrt{\kappa(L)}$.
\end{lemma}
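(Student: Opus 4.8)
The plan is to convert the implicit condition in Definition~\ref{definition:k^*} into an explicit one by isolating an $\order(\sqrt{\kappa(L)})$ threshold past which \eqref{eq:k^* definition} is guaranteed to hold. Write $s := \sqrt{\kappa(L)}$, so $s \ge 1$. When $s = 1$ the left-hand side of \eqref{eq:k^* definition} equals $0$ for every $k \ge 1$ while the right-hand side is strictly positive, so the inequality holds for all $k \ge 1$ and $k^* = 1$. For $s > 1$ I would divide both sides of \eqref{eq:k^* definition} by $\left(1 - \tfrac1s\right)^k > 0$ and use the identity $\frac{1 - 1/(2s)}{1 - 1/s} = 1 + \frac{1}{2(s-1)}$ to see that \eqref{eq:k^* definition} is equivalent to
\[
1 + \frac{k}{s+1} \;\le\; \Bigl(1 + \frac{1}{2(s-1)}\Bigr)^{k}.
\]

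For $1 < s \le 3$ this follows immediately from Bernoulli's inequality $\bigl(1+\tfrac{1}{2(s-1)}\bigr)^k \ge 1 + \tfrac{k}{2(s-1)}$ together with the fact that $2(s-1) \le s+1$ in this regime, so again the inequality holds for all $k \ge 1$. The only case requiring work is $s > 3$: now the linear Bernoulli bound is too weak, since $\tfrac{1}{2(s-1)} < \tfrac{1}{s+1}$, so I would instead retain the quadratic term of the binomial expansion, $\bigl(1+\tfrac{1}{2(s-1)}\bigr)^k \ge 1 + \tfrac{k}{2(s-1)} + \tfrac{k(k-1)}{8(s-1)^2}$ (every binomial coefficient being nonnegative). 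Subtracting $1 + \tfrac{k}{s+1}$ and dividing through by $k$, the desired bound reduces to $\tfrac{k-1}{8(s-1)^2} \ge \tfrac{s-3}{2(s-1)(s+1)}$, i.e.\ to $k - 1 \ge \tfrac{4(s-1)(s-3)}{s+1}$; since $\tfrac{(s-1)(s-3)}{s+1} < s-1 < s$, it is enough to take $k \ge 4s + 1$.

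Putting the cases together, for every value of $\kappa(L) \ge 1$ the inequality \eqref{eq:k^* definition} holds for all integers $k \ge 4\sqrt{\kappa(L)} + 1$; in particular the set of positive integers $m$ for which \eqref{eq:k^* definition} holds at every $k \ge m$ is nonempty, so $k^*$ is well-defined, and $k^* \le \lceil 4\sqrt{\kappa(L)} + 1\rceil \le 4\sqrt{\kappa(L)} + 2 \le 6\sqrt{\kappa(L)}$, the last step using $\sqrt{\kappa(L)} \ge 1$. Thus the claim holds with $C = 6$. The main obstacle is really just the $s > 3$ regime: one has to notice that the first-order expansion of $\bigl(1+\tfrac{1}{2(s-1)}\bigr)^k$ fails to dominate $1 + \tfrac{k}{s+1}$ and that adding the second-order term does, after which only elementary algebra and a little care with the ceiling remain.
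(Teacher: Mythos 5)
Your proof is correct. It starts from the same reduction as the paper's own argument: treat $\kappa(L)=1$ separately, then for $\kappa(L)>1$ divide \eqref{eq:k^* definition} by $\left(1-\tfrac{1}{\sqrt{\kappa(L)}}\right)^k$ to get the equivalent form $1+\tfrac{k}{\sqrt{\kappa(L)}+1}\le\left(1+\tfrac{1}{2(\sqrt{\kappa(L)}-1)}\right)^k$. Where you diverge is the finishing estimate. The paper lower-bounds the right-hand side by $e^{k/(2\sqrt{\kappa(L)})}$ and checks the resulting inequality at the single point $k=3\sqrt{\kappa(L)}$, obtaining $C=3$; you instead split at $\sqrt{\kappa(L)}=3$, use plain Bernoulli in the small-$\kappa$ regime, and keep the second-order term of the binomial expansion in the large-$\kappa$ regime, arriving at the explicit monotone sufficient condition $k\ge 4\sqrt{\kappa(L)}+1$ and $C=6$. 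Your route is purely algebraic (no exponential comparison) and, because your sufficient condition is monotone in $k$, it directly certifies the inequality for \emph{every} integer $k$ beyond the threshold, which is exactly what Definition~\ref{definition:k^*} demands; the paper's single-point check leaves the (easy) step that the inequality persists for all larger $k$ implicit. The price is a slightly larger absolute constant, which is immaterial for the lemma.
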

\begin{proof}
First of all,  we show that $k^*$ is well-defined. When $\kappa(L) = 1$, we have $k^* = 1$ because $\left(1 - \frac{1}{\sqrt{\kappa(L)}}\right) = 0$ and thus Eq.~\eqref{eq:k^* definition} holds for all $k \geq 1$. When $\kappa(L) > 1$, Eq.~\eqref{eq:k^* definition} is equivalent to 
\begin{align}
\label{eq:k^* equivalent form}
\left(1+\frac{k}{\sqrt{\kappa(L)}+1}\right) \leq \left(\frac{1 - \frac{1}{2\sqrt{\kappa(L)}}}{1 - \frac{1}{\sqrt{\kappa(L)}}}\right)^{k} &= \left(1 + \frac{1}{2\left(\sqrt{\kappa(L)} - 1\right)}\right)^k.
\end{align}
As the left-hand side is linear in $k$ while the right-hand side is exponential in $k$, for any $\kappa(L) > 1$, there exists a finite smallest positive integer $k^*$ such that Eq.~\eqref{eq:k^* equivalent form} holds for all integer $k \geq k^*$. Next we show that $k^* \leq C \cdot \kappa(L)$ for some absolute constant $C$. Since
\begin{align*}
1 + \frac{1}{2\left(\sqrt{\kappa(L)} - 1\right)} \geq e^{\frac{1}{2\sqrt{\kappa(L)}}}
\end{align*}
holds for all $\kappa(L) > 1$, then for any integer $k \geq 0$: \begin{align*}
\left(1 + \frac{1}{2\left(\sqrt{\kappa(L)} - 1\right)}\right)^k \geq e^{\frac{k}{2\sqrt{\kappa(L)}}}.
\end{align*}
Let $k = 3\sqrt{\kappa(L)}$, then we have
\begin{align*}
\left(1+\frac{k}{\sqrt{\kappa(L)}+1}\right) < 4 < e^{1.5} = e^{\frac{k}{2\sqrt{\kappa(L)}}} \leq \left(1 + \frac{1}{2\left(\sqrt{\kappa(L)} - 1\right)}\right)^k =\left(\frac{1 - \frac{1}{2\sqrt{\kappa(L)}}}{1 - \frac{1}{\sqrt{\kappa(L)}}}\right)^{k}. 
\end{align*}
Thus, $k^* \leq 3\sqrt{\kappa(L)}$ for all $\kappa(L) > 1$. Since $k^* = 1 \leq \sqrt{\kappa(L)}$ when $\kappa(L) = 1$, there must exist an absolute constant $C \geq 1$ such that $k^* \leq C \cdot \sqrt{\kappa(L)}$.
\end{proof}

\begin{lemma}
\label{lemma:infinite sum upper bound}
Let $k^*$ be defined according to Definition \ref{definition:k^*}. Then we have
\begin{align*}
\sum^{+\infty}_{k=0} \left[\left(1+\frac{k}{\sqrt{\kappa(L)}+1}\right)\left(1 - \frac{1}{\sqrt{\kappa(L)}}\right)^k\right]^2 \leq k^* + \sqrt{\kappa(L)}.
\end{align*}
\end{lemma}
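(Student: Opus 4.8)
The plan is to split the infinite sum at the threshold $k^*$ and bound the two pieces separately. Write $a_k := \left(1+\frac{k}{\sqrt{\kappa(L)}+1}\right)\left(1 - \frac{1}{\sqrt{\kappa(L)}}\right)^k$, so that the quantity to be controlled is $\sum_{k=0}^\infty a_k^2 = \sum_{k=0}^{k^*-1} a_k^2 + \sum_{k=k^*}^\infty a_k^2$; I will show the head is at most $k^*$ and the tail is at most $\sqrt{\kappa(L)}$, which gives the claim.

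For the head $\sum_{k=0}^{k^*-1} a_k^2$, the key observation is that $a_k \le 1$ for every integer $k \ge 0$. When $\kappa(L) = 1$ this is immediate since $a_0 = 1$ and $a_k = 0$ for $k \ge 1$. When $\kappa(L) > 1$, rewrite $a_k \le 1$ as $1 + \frac{k}{\sqrt{\kappa(L)}+1} \le \left(1 - \frac{1}{\sqrt{\kappa(L)}}\right)^{-k} = \left(1 + \frac{1}{\sqrt{\kappa(L)}-1}\right)^k$, and apply Bernoulli's inequality to the right-hand side together with the trivial bound $\frac{1}{\sqrt{\kappa(L)}-1} \ge \frac{1}{\sqrt{\kappa(L)}+1}$. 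Since each of the $k^*$ terms $a_0^2,\ldots,a_{k^*-1}^2$ is then at most $1$, we get $\sum_{k=0}^{k^*-1} a_k^2 \le k^*$.

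For the tail $\sum_{k=k^*}^\infty a_k^2$, I invoke the defining property of $k^*$ (Definition~\ref{definition:k^*}): for all $k \ge k^*$ we have $a_k \le \left(1 - \frac{1}{2\sqrt{\kappa(L)}}\right)^k$, hence $a_k^2 \le \left(1 - \frac{1}{2\sqrt{\kappa(L)}}\right)^{2k}$, and summing the geometric series gives
\[
\sum_{k=k^*}^\infty a_k^2 \;\le\; \frac{\left(1 - \frac{1}{2\sqrt{\kappa(L)}}\right)^{2k^*}}{1 - \left(1 - \frac{1}{2\sqrt{\kappa(L)}}\right)^{2}}.
\]
Since $k^* \ge 1$ and $0 < 1 - \frac{1}{2\sqrt{\kappa(L)}} < 1$, the numerator is at most $\left(1 - \frac{1}{2\sqrt{\kappa(L)}}\right)^2$, which a short computation bounds by $1 - \frac{1}{4\sqrt{\kappa(L)}}$ (this reduces to $\sqrt{\kappa(L)} \ge \tfrac13$), while the denominator equals $\frac{4\sqrt{\kappa(L)}-1}{4\kappa(L)}$; multiplying these two expressions yields exactly $\sqrt{\kappa(L)}$, so the tail is at most $\sqrt{\kappa(L)}$.

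Adding the two bounds gives $\sum_{k=0}^\infty a_k^2 \le k^* + \sqrt{\kappa(L)}$, as desired. I do not expect a substantial obstacle here; the only delicate point is that the crude geometric estimate $\sum_{k=0}^\infty \left(1-\frac{1}{2\sqrt{\kappa(L)}}\right)^{2k} = \frac{4\kappa(L)}{4\sqrt{\kappa(L)}-1}$ slightly exceeds $\sqrt{\kappa(L)}$, so one genuinely must retain the extra factor $\left(1-\frac{1}{2\sqrt{\kappa(L)}}\right)^{2k^*} \le \left(1-\frac{1}{2\sqrt{\kappa(L)}}\right)^2$ arising from starting the tail at $k^* \ge 1$ in order to close the gap.
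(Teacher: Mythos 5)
Your proof is correct and follows essentially the same route as the paper: split the sum at $k^*$, bound each of the first $k^*$ terms by $1$ (the paper uses $1+x \le e^x$ where you use Bernoulli's inequality), and control the tail via Definition~\ref{definition:k^*} and a geometric series. Your explicit tail computation, which uses $k^* \ge 1$ to retain the factor $\left(1 - \frac{1}{2\sqrt{\kappa(L)}}\right)^{2k^*}$, correctly fills in a step the paper states without detail, since the geometric sum started at $k=0$ would slightly exceed $\sqrt{\kappa(L)}$.
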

\begin{proof}
Using the fact that
\begin{align*}
\left(1+\frac{k}{\sqrt{\kappa(L)}+1}\right)\left(1 - \frac{1}{\sqrt{\kappa(L)}}\right)^k \leq e^{\frac{k}{\sqrt{\kappa(L)}+1}} \cdot e^{-\frac{k}{\sqrt{\kappa(L)}}} \leq 1 \quad \text{for all } 0 \leq k \leq k^*-1,
\end{align*}
and Eq.~\eqref{eq:k^* definition}, we have
\begin{align*}
\begin{split}
\sum^{+\infty}_{k=0} \left[\left(1+\frac{k}{\sqrt{\kappa(L)}+1}\right)\left(1 - \frac{1}{\sqrt{\kappa(L)}}\right)^k\right]^2 &\leq k^* + \sum^{\infty}_{k=k^*} \left[\left(1+\frac{k}{\sqrt{\kappa(L)}+1}\right)\left(1 - \frac{1}{\sqrt{\kappa(L)}}\right)^k\right]^2\\
&\leq k^* + \sum^{\infty}_{k=k^*} \left(1 - \frac{1}{2\sqrt{\kappa(L)}}\right)^{2k}\\
&\leq k^* + \sqrt{\kappa(L)}.\\
\end{split}
\end{align*}
\end{proof}

\begin{lemma}
\label{lemma:upper bound inverse of a matrix}
Suppose $A$ and $B$ are invertible real square matrices such that $\norm{A^{-1} \left(B - A \right)}_2  < 1$. Then we have
\begin{align}
    \label{eq:upper bound inverse of a matrix}
\norm{A^{-1} - B^{-1}}_2 \leq \frac{\norm{A^{-1}}^2_2 \norm{A - B}_2}{1 - \norm{A^{-1} \left(B - A \right)}_2}.
\end{align}
\end{lemma}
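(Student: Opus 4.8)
The plan is to use the standard resolvent-type identity
\begin{align*}
A^{-1} - B^{-1} = A^{-1}(B - A)B^{-1},
\end{align*}
which follows by left-multiplying by $A$ and right-multiplying by $B$. Taking operator norms and using submultiplicativity gives
\begin{align*}
\norm{A^{-1} - B^{-1}}_2 \leq \norm{A^{-1}}_2 \norm{B - A}_2 \norm{B^{-1}}_2,
\end{align*}
so the entire task reduces to controlling $\norm{B^{-1}}_2$ in terms of $\norm{A^{-1}}_2$ and the quantity $\norm{A^{-1}(B-A)}_2$.

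To bound $\norm{B^{-1}}_2$, I would write $B = A\bigl(I + A^{-1}(B - A)\bigr)$, so that $B^{-1} = \bigl(I + A^{-1}(B-A)\bigr)^{-1} A^{-1}$. Since $\norm{A^{-1}(B-A)}_2 < 1$ by hypothesis, the matrix $I + A^{-1}(B-A)$ is invertible (its inverse is given by the convergent Neumann series $\sum_{m \geq 0} \bigl(-A^{-1}(B-A)\bigr)^m$), and the triangle inequality applied termwise yields
\begin{align*}
\norm{\bigl(I + A^{-1}(B-A)\bigr)^{-1}}_2 \leq \sum_{m=0}^{\infty} \norm{A^{-1}(B-A)}_2^m = \frac{1}{1 - \norm{A^{-1}(B-A)}_2}.
\end{align*}
Hence $\norm{B^{-1}}_2 \leq \dfrac{\norm{A^{-1}}_2}{1 - \norm{A^{-1}(B-A)}_2}$.

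Substituting this bound into the first display gives exactly
\begin{align*}
\norm{A^{-1} - B^{-1}}_2 \leq \frac{\norm{A^{-1}}_2^2 \norm{A - B}_2}{1 - \norm{A^{-1}(B - A)}_2},
\end{align*}
using $\norm{B - A}_2 = \norm{A - B}_2$. There is no real obstacle here; the only point requiring a line of justification is the Neumann-series estimate for $\norm{(I + A^{-1}(B-A))^{-1}}_2$, which is standard. One could alternatively avoid the series and simply note $\norm{(I+M)^{-1}}_2 \leq (1 - \norm{M}_2)^{-1}$ whenever $\norm{M}_2 < 1$, which follows from $1 = \norm{(I+M)^{-1}(I+M)}_2 \geq \norm{(I+M)^{-1}}_2(1 - \norm{M}_2)$; I would likely use whichever phrasing is most self-contained.
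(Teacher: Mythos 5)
Your proposal is correct and follows essentially the same route as the paper: both start from the identity $A^{-1}-B^{-1}=A^{-1}(B-A)B^{-1}$ with submultiplicativity, and both reduce to the bound $\norm{B^{-1}}_2 \leq \norm{A^{-1}}_2/\bigl(1-\norm{A^{-1}(B-A)}_2\bigr)$. The only cosmetic difference is that you justify this last bound via the factorization $B^{-1}=\bigl(I+A^{-1}(B-A)\bigr)^{-1}A^{-1}$ and a Neumann-series estimate, whereas the paper writes $B^{-1}=A^{-1}-A^{-1}(B-A)B^{-1}$ and rearranges the resulting triangle inequality; these are interchangeable one-line arguments.
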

\begin{proof}
Since $A^{-1} - B^{-1} = A^{-1} \left(B - A \right)B^{-1}$, we have 
\begin{align}
\label{eq:upper bound inverse of a matrix (1)}
\norm{A^{-1} - B^{-1}}_2 = \norm{A^{-1} \left(B - A \right)B^{-1}}_2 \leq \norm{A^{-1}}_2 \norm{A-B}_2 \cdot \norm{B^{-1}}_2. 
\end{align}
Moreover, since $B^{-1} = A^{-1} - A^{-1} \left(B - A \right)B^{-1}$, we obtain 
\begin{align*}
\norm{B^{-1}}_2 &= \norm{A^{-1} - A^{-1} \left(B - A \right)B^{-1}}_2 \leq \norm{A^{-1}}_2 + \norm{A^{-1} \left(B - A \right)}_2 \cdot \norm{B^{-1}}_2, 
\end{align*}
which is equivalent to 
\begin{align}
\label{eq:upper bound inverse of a matrix (2)}
\left(1 - \norm{A^{-1} \left(B - A \right)}_2 \right) \norm{B^{-1}}_2 \leq \norm{A^{-1}}_2.
\end{align}
Since $\norm{A^{-1} \left(B - A \right)}_2 < 1$, it follows from Eq.~\eqref{eq:upper bound inverse of a matrix (2)} that
\begin{align}
\label{eq:upper bound inverse of a matrix (3)}
\norm{B^{-1}}_2 \leq \frac{\norm{A^{-1}}_2}{1 - \norm{A^{-1} \left(B - A \right)}_2}.
\end{align}
Finally, substituting Eq.~\eqref{eq:upper bound inverse of a matrix (3)} into Eq.~\eqref{eq:upper bound inverse of a matrix (1)}, we obtain the desired result.
\end{proof}
\section{Discussion}
We considered an online, stochastic distributed averaging problem in which noisy data becomes available sequentially to agents over a network. We developed a stochastic dual accelerated method for this problem using constant step-size and Polyak--Ruppert averaging. We showed that this simple algorithm attains an accelerated deterministic error depending optimally on the connectivity parameter of the network, and also that it has an order-optimal stochastic error. This improves on the guarantees of state-of-the-art distributed stochastic optimization algorithms when specialized to this setting. Our proofs explicitly studied the evolution of several relevant linear systems, and may be of independent interest. 

Let us conclude by mentioning a few future directions. One drawback of our setting is that the communication network is static. In many applications, the underlying connectivity structure of the network may vary with time, so a future direction is to extend our approach to this more challenging setting. Another drawback, from the perspective of distributed optimization, is that our setting only considers the special class of quadratic objective functions. Thus, a natural next step is to generalize our dual method to strongly convex and smooth local objective functions. This would allow us to make progress on the design of optimal dual-based algorithms under this setting, which is known to be an open problem in the literature \citep{gorbunov2020recent}.

\small
\subsection*{Acknowledgments}
SZ was partially supported by the ARC/ACO Fellowship at Georgia Tech. AP was supported in part by the National Science Foundation through grants CCF-2107455 and DMS-2210734, and by an Adobe Data Science Faculty Research Award. JR was supported by ARL DCIST CRA W911NF-17-2-0181 and the AI4OPT NSF AI Institute.

\bibliography{refs.bib}

\normalsize
\appendix
\section{Distributed Stochastic Gradient Method}
\label{sec:distributed stochastic gradient}



\begin{algorithm}
\caption{Distributed Stochastic Gradient Method (\texttt{DSG})}\label{alg:Distributed Stochastic Gradient Method}
\begin{algorithmic}[1]
\State \textbf{Input:} number of iterations $T>0$, weight matrix $W \in R^{N \times N}$ and step-size sequence $\{\eta_t\}^{T-1}_{t=0}$.
\State \textbf{Initialization:} each agent $i \in \mathcal{N}$ initializes its local estimate $\widehat{\theta}_{i,0} = \mathbf{0} \in \mathbb{R}^n$.
\For{$t=0,1,\ldots,T-1$}
\For{each agent $i \in \mathcal{N}$}
\State observes a local random vector $R_{i,t}$ and executes the local computation:
\begin{equation}
\label{eq:Online Distributed Linear Itearative Algorithm Diminishing Step-Sizes Update Approximation}
\begin{split}
\tilde{\theta}_{i,t} = \widehat{\theta}_{i,t} + \eta_t \left(R_{i,t} - \widehat{\theta}_{i,t} \right).
\end{split} 
\end{equation}
\State exchanges $\tilde{\theta}_{i,t}$ with agent $j \in \mathcal{N}_i$ and executes the local update:
\begin{equation}
\label{eq:Online Distributed Linear Itearative Algorithm Diminishing Step-Sizes Update Consensus}
\begin{split}
\widehat{\theta}_{i,t+1} = \sum_{j \in \mathcal{N}_i \cup \{i\}} W_{i,j} \tilde{\theta}_{j,t}.
\end{split} 
\end{equation}
\EndFor
\EndFor
\State \textbf{Output:} $\widehat{\theta}_{i,T}$ for all $i \in \mathcal{N}$.
\end{algorithmic}
\end{algorithm}
In this section, we present the details of \texttt{DSG}, the baseline of our first set of experiments in Section \ref{sec:numerical results}, and provide its theoretical guarantee in Theorem \ref{theorem:finite-time upper bound of Consensus-Based Stochastic Gradient Method}. The method is formally stated in Algorithm \ref{alg:Distributed Stochastic Gradient Method}, which can be explained as follows. Each node $i \in \mathcal{N}$ maintains its own estimate $\widehat{\theta}_i$ of the target vector $\bar{\mu}$. At every iteration $t$, agent $i$ first computes the intermediate value $\tilde{\theta}_{i,t}$ by moving along its own negative stochastic gradient direction $R_{i,t} - \widehat{\theta}_{i,t}$, pushing the estimate toward $\mu_i$. Then, agent $i$ updates the estimate $\widehat{\theta}_{i,t}$ by forming a weighted average of its own intermediate value $\tilde{\theta}_{i,t}$ and the intermediate values $\{\tilde{\theta}_{j,t}\}_{j \in \mathcal{N}_i}$ received from its neighbors, with the goal of seeking consensus on their estimates.

We make the following standard assumptions \citep{xiao2004fast,xiao2007distributed} about the weight matrix $W = [W_{i,j}] \in \mathbb{R}^{N \times N}$:
\begin{itemize}
    \item[1.] $W$ is symmetric: $W = W^\top$.
    \item[2.] The all-ones vector $e_N \in \mathbb{R}^N$ is an eigenvector of $W$ associated with eigenvalue one: $We_N = e_N$.
    \item[3.] $W$ is defined on the edges of the network: $W_{i,j} \not = 0$ only if $\{i,j\} \in \mathcal{E}$ or $i = j$.
    \item[4.] The spectral radius of $W - \frac{1}{N}e_N e^\top_N$ is strictly less than one: $\rho\left( W - \frac{1}{N}e_N e^\top_N \right) < 1$.
\end{itemize}
These conditions ensures that (i) one is a simple eigenvalue of $W$, and all other eigenvalues are strictly less than one in absolute value; (ii) $W^t \rightarrow \frac{1}{N}e_N e^\top_N$ as $t \rightarrow \infty$. We denote by $\lambda_{(i)}(W)$ the $i$-th largest eigenvalue of $W$ in magnitude, and thus $1 = \lambda_{(1)}(W) > \abs{\lambda_{(2)}(W)} \geq \cdots \geq \abs{\lambda_{(N)}(W)}$. So the condition number $\kappa(W)$ of $W$ is defined as
\begin{align*}
\kappa(W):= \frac{1}{1 - \abs{\lambda_{(2)}(W)}}.
\end{align*}
It is worth noting that given a weight matrix $W$ satisfying the above conditions, we can easily construct a gossip matrix $L$ by $L := I_N - W$, which automatically satisfies the assumptions on $L$ in Section \ref{sec:stochastic dual accelerated method}. This construction of $L$ from $W$ has also been used in our numerical experiments; see Appendix \ref{sec:implementation details} for details. 

We now present the finite-time performance upper bound of \texttt{DSG}.
\begin{theorem}
\label{theorem:finite-time upper bound of Consensus-Based Stochastic Gradient Method}
Consider running \texttt{DSG} with the following parameters: 
\begin{align*}
T > 0 ~\text{ and }~  \eta_t = \frac{1}{t+1} \text{ for all } t=0,1,\ldots,T-1.
\end{align*} 
Let $\{\widehat{\theta}_{i,T}\}_{i \in \mathcal{N}}$ be generated by \texttt{DSG}. Then we have
\begin{align*}
\mathbb{E}\left[\sum^N_{i=1}\norm{\widehat{\theta}_{i,T} - \bar{\mu}}^2_2\right] \leq \frac{2\kappa(W)^2\sum^N_{i=1}\norm{\mu_i}^2_2}{T^2} + \frac{2}{T^2} \sum^{N-1}_{i=1}\frac{\sum^n_{s=1}\sigma^2_{(i),s}}{1 - \abs{\lambda_{(i+1)}\left(W\right)}^2} + \frac{2\sum^N_{i=1}\sum^n_{s=1} \sigma^2_{i,s}}{NT},
\end{align*}
where $\sigma^2_{(i), s}$ is the $i$-th largest variance among $\{\sigma^2_{1,s}, \ldots, \sigma^2_{N,s}\}$ for all $i = 1, \ldots, N$ and $s=1,\ldots,n$.
\end{theorem}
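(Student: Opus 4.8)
The plan is to follow the template used for Theorem~\ref{theorem:upper bound of stochastic dual accelerated method}: exploit linearity to reduce to a scalar problem, obtain a closed form for the iterates, and then split the error into a deterministic (bias) part and a noise part. Because the \texttt{DSG} recursion acts identically and independently on each of the $n$ coordinates, it suffices to fix a column $s \in \{1,\dots,n\}$ and track $\widehat{\Theta}_{t,s} := [\widehat{\theta}_{1,t,s}\ \cdots\ \widehat{\theta}_{N,t,s}]^{\top} \in \mathbb{R}^N$, where $\widehat{\theta}_{i,t,s}$ is the $s$-th component of $\widehat{\theta}_{i,t}$. Substituting $\eta_t = \tfrac{1}{t+1}$ and combining \eqref{eq:Online Distributed Linear Itearative Algorithm Diminishing Step-Sizes Update Approximation} with \eqref{eq:Online Distributed Linear Itearative Algorithm Diminishing Step-Sizes Update Consensus} gives the per-column recursion $\widehat{\Theta}_{t+1,s} = \tfrac{t}{t+1} W \widehat{\Theta}_{t,s} + \tfrac{1}{t+1} W \widetilde{R}_{t,s}$, where $\widetilde{R}_{t,s} := [R_{1,t,s}\ \cdots\ R_{N,t,s}]^{\top}$ has mean $\widetilde{\mu}_s := [\mu_{1,s}\ \cdots\ \mu_{N,s}]^{\top}$ and diagonal covariance $\widetilde{\Sigma}_s := \mathrm{diag}(\sigma^2_{1,s},\dots,\sigma^2_{N,s})$. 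Multiplying through by $t+1$ and using the initialization $\widehat{\Theta}_{0,s} = 0$, a one-line induction yields the closed form
\[
\widehat{\Theta}_{T,s} \;=\; \frac{1}{T}\sum_{k=0}^{T-1} W^{\,T-k}\,\widetilde{R}_{k,s}.
\]

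Next I would decompose the error around the target $\bar{\mu}_s e_N$. Let $J := \tfrac{1}{N} e_N e_N^{\top}$, so $\bar{\mu}_s e_N = J \widetilde{\mu}_s$. The standing assumptions on $W$ give $W e_N = e_N$, hence $W^m J = J$ and $(W - J)^m = W^m - J$ for all $m \ge 1$, together with $J(W-J) = (W-J)J = 0$. Writing $\omega_{k,s} := \widetilde{R}_{k,s} - \widetilde{\mu}_s$ --- mean zero, covariance $\widetilde{\Sigma}_s$, independent across $k$ --- these identities give
\[
\widehat{\Theta}_{T,s} - \bar{\mu}_s e_N \;=\; \underbrace{\frac{1}{T}\sum_{m=1}^{T} (W-J)^m \widetilde{\mu}_s}_{=:\,b_s} \;+\; \underbrace{\frac{1}{T}\sum_{k=0}^{T-1} W^{\,T-k}\omega_{k,s}}_{=:\,z_s},
\]
so that $\mathbb{E}\big[\norm{\widehat{\Theta}_{T,s} - \bar{\mu}_s e_N}^2_2\big] \le 2\norm{b_s}^2_2 + 2\,\mathbb{E}\big[\norm{z_s}^2_2\big]$.

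For the bias term, $W - J$ is symmetric with $\norm{W-J}_2 = \abs{\lambda_{(2)}(W)}$, so the triangle inequality and a geometric series give $\norm{b_s}_2 \le \tfrac{1}{T}\sum_{m \ge 1}\abs{\lambda_{(2)}(W)}^m \norm{\widetilde{\mu}_s}_2 \le \tfrac{\kappa(W)}{T}\norm{\widetilde{\mu}_s}_2$. For the noise term, independence and $\mathbb{E}[\omega_{k,s}\omega_{k,s}^{\top}] = \widetilde{\Sigma}_s$ give $\mathbb{E}\big[\norm{z_s}^2_2\big] = \tfrac{1}{T^2}\sum_{m=1}^{T}\mathrm{tr}\big(W^{2m}\widetilde{\Sigma}_s\big)$. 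Using $W^{2m} = J + (W-J)^{2m}$ and $\mathrm{tr}(J\widetilde{\Sigma}_s) = \tfrac{1}{N}\sum_{i=1}^N \sigma^2_{i,s}$, this becomes $\tfrac{1}{NT}\sum_{i=1}^N\sigma^2_{i,s} + \tfrac{1}{T^2}\mathrm{tr}(M_T\widetilde{\Sigma}_s)$, where $M_T := \sum_{m=1}^T (W-J)^{2m}$ is symmetric positive semidefinite with eigenvalue $0$ along $e_N$ and eigenvalues $\sum_{m=1}^T \lambda_{(i)}(W)^{2m} \le \tfrac{1}{1 - \abs{\lambda_{(i)}(W)}^2}$ for $i = 2,\dots,N$ on the orthogonal complement. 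Applying von Neumann's trace inequality to $M_T$ and the diagonal matrix $\widetilde{\Sigma}_s$ pairs the $i$-th largest eigenvalue of $M_T$ with the $i$-th largest variance $\sigma^2_{(i),s}$, yielding $\mathrm{tr}(M_T\widetilde{\Sigma}_s) \le \sum_{i=1}^{N-1} \tfrac{\sigma^2_{(i),s}}{1 - \abs{\lambda_{(i+1)}(W)}^2}$. Substituting the bias and noise bounds into the split above and summing over $s = 1,\dots,n$, using $\sum_{s=1}^n \norm{\widetilde{\mu}_s}^2_2 = \sum_{i=1}^N \norm{\mu_i}^2_2$, produces exactly the claimed inequality.

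I expect the main obstacle to be the variance term. Obtaining the precise pairing of the $i$-th largest eigenvalue magnitude of $W$ with the $i$-th largest coordinate variance requires von Neumann's trace inequality rather than a crude bound such as replacing every $\sigma^2_{(i),s}$ by $\max_{i} \sigma^2_{i,s}$; one has to verify carefully that, after subtracting the rank-one component $J$, the eigenvalues of $M_T$ are exactly $\{0\} \cup \{\sum_{m=1}^T \lambda_{(i)}(W)^{2m}\}_{i=2}^{N}$ and are sorted in the order that makes the trace inequality line up with the statement. The closed-form derivation and the bias bound are routine by comparison.
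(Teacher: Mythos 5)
Your proposal is correct and, in substance, follows the same route as the paper's proof: a closed form for the iterates made possible by $\eta_t = \tfrac{1}{t+1}$, separation of the $e_N$-direction via $J=\tfrac{1}{N}e_Ne_N^\top$ (using $We_N=e_N$), a geometric-series bound giving the $\kappa(W)^2/T^2$ term, and von Neumann's trace inequality pairing the sorted eigenvalues of the $(W-J)$-part with the sorted variances $\sigma^2_{(i),s}$. The only difference is organizational: the paper first splits the error into a consensus error (orthogonal to $e_N$) plus an averaging error (along $e_N$) and handles the mean and noise inside the consensus term, whereas you split into a deterministic bias plus zero-mean noise and extract the $J$-component inside the noise; the three resulting terms and the inequalities used coincide, so your argument is a valid proof of the stated bound.
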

\begin{proof} 
Fix an arbitrary $s \in \{1,\ldots,n\}$.
Combining Eq. \eqref{eq:Online Distributed Linear Itearative Algorithm Diminishing Step-Sizes Update Approximation} and \eqref{eq:Online Distributed Linear Itearative Algorithm Diminishing Step-Sizes Update Consensus}, we have the following recursion for all integer $t \geq 0$:
\begin{equation}
\label{eq:Online Distributed Average Consensus Update in matrix form}
\begin{split}
\widetilde{\Theta}_{t+1,s} = W \Big[\widetilde{\Theta}_{t,s} + \eta_t \left(\widetilde{R}_{t,s} - \widetilde{\Theta}_{t,s} \right)\Big]\\
\end{split} 
\end{equation}
where
\begin{equation*}
\begin{split}
\widetilde{\Theta}_{t,s} = \begin{bmatrix}
\theta_{1,t,s} \cdots \theta_{N,t,s} \end{bmatrix}^\top \text{ and } \widetilde{R}_{t,s} = \begin{bmatrix}
 R_{1,t,s} \cdots R_{N,t,s} 
\end{bmatrix}^\top.
\end{split} 
\end{equation*}
Note that, we use $\theta_{i,t,s}$ and $R_{i,t,s}$ to denote the $s$-th component of the vectors $\theta_{i,t}$ and $R_{i,t}$, respectively. Let
\begin{equation*}
\begin{split}
\bar{\Theta}_{t,s} :=\frac{1}{N}e_N^\top \widetilde{\Theta}_{t,s} \quad \text{ and } \quad \bar{R}_{t,s} := \frac{1}{N} e_N^\top \widetilde{R}_{t,s},
\end{split} 
\end{equation*}
then by Eq. \eqref{eq:Online Distributed Average Consensus Update in matrix form} we have the the following recursion for all integer $t \geq 0$:
\begin{equation}
\label{eq:Online Distributed Average Consensus Update average estimate}
\begin{split}
\bar{\Theta}_{t+1,s} = \bar{\Theta}_{t,s} + \eta_t \left(\bar{R}_{t,s} - \bar{\Theta}_{t,s} \right).
\end{split} 
\end{equation}
Substituting $\eta_t = \frac{1}{t+1}$ into Eq. \eqref{eq:Online Distributed Average Consensus Update average estimate} and unrolling the recursion, we obtain that
\begin{equation*} 
\begin{split}
\bar{\Theta}_{T,s} = \frac{1}{T} \sum^{T-1}_{t=0}\bar{R}_{t,s}.
\end{split} 
\end{equation*}
Since $\mathbb{E}\left[\bar{\Theta}_{T,s}\right] = \bar{\mu}_s$ and $\text{var}\left(\bar{\Theta}_{T,s}\right) = \frac{1}{N^2T} \sum^{N}_{i=1}\sigma^2_{i,s}$, we have
\begin{equation}
\label{eq:average error}
\begin{split}
\mathbb{E}\left[\norm{ \bar{\Theta}_{T,s} e_N - \bar{\mu}_s e_N }^2_{2} \right] = N\mathbb{E}\left[\left(\bar{\Theta}_{T,s} - \bar{\mu}_s\right)^2_2\right] = \frac{1}{NT}\sum^{N}_{i=1}\sigma^2_{i,s}.
\end{split} 
\end{equation}
In addition, if we subtract both sides of Eq. \eqref{eq:Online Distributed Average Consensus Update in matrix form} by $\bar{\Theta}_{T,s} e_N$ and unroll the recursion, then we have
\begin{equation*}
\begin{split}
{\widetilde{\Theta}}_{T,s} - \bar{\Theta}_{T,s} e_N = \sum^{T-1}_{t=0} \eta_{t} \left[\prod^{T-1}_{i=t+1} \left(1-\eta_{i}\right)\right] \left(W - \frac{1}{N} e_N e^\top_N \right)^{T-t}\widetilde{R}_{t,s}.
\end{split} 
\end{equation*}
Since $\eta_t = \frac{1}{t+1}$ for all $t \geq 0$,  we have
\begin{equation*}
\begin{split}
\eta_{t} \left[\prod^{T-1}_{i=t+1} \left(1-\eta_{i}\right)\right] = \frac{1}{T} \quad \text{for all } t = 0,\ldots,T-1,
\end{split} 
\end{equation*}
from which it follows that
\begin{equation*}
\begin{split}
{\widetilde{\Theta}}_{T,s} - \bar{\Theta}_{T,s} e_N=\frac{1}{T}\sum^{T-1}_{t=0} \left(W - \frac{1}{N} e_N e^\top_N \right)^{T-t} \widetilde{R}_{t,s}.\\
\end{split} 
\end{equation*}
Hence,
\begin{equation}
\label{eq:consensus error}
\begin{split}
\mathbb{E}\left[\norm{ \widetilde{\Theta}_{T,s} - \bar{\Theta}_{T,s} e_N }^2_{2}\right] &= \frac{1}{T^2} \mathbb{E}\left[\norm{\sum^{T-1}_{t=0} \left(W - \frac{1}{N} e_N e^\top_N \right)^{T-t} \widetilde{R}_{t,s}}^2_{2}\right] \\
&= \frac{1}{T^2} \sum^{T}_{k=1}\text{tr}\left(\left(W - \frac{1}{N} e_N e^\top_N \right)^{2k} \widetilde{\Sigma}_s\right) \\
&\quad+ \frac{1}{T^2} \sum^{T}_{i=1}\sum^{T}_{j=1}\text{tr}\left(\left(W - \frac{1}{N} e_N e^\top_N \right)^{i+j} \widetilde{\mu}_s \widetilde{\mu}_s^\top \right)\\
&\overset{(a)}{\leq} \frac{1}{T^2}\sum^{T}_{k=1} \sum^{N-1}_{i=1} \abs{\lambda_{(i+1)}\left(W\right)}^{2k} \cdot \sigma^2_{(i),s} + \frac{\norm{\widetilde{\mu}_s}^2_2}{T^2} \sum^{T}_{j=1}\sum^{T}_{k=1} \abs{\lambda_{(2)}(W)}^{j+k}\\
&\overset{(b)}{=} \frac{1}{T^2} \sum^{N-1}_{i=1} \sigma^2_{(i),s} \sum^{T}_{k=1} \abs{\lambda_{(i+1)}\left(W\right)}^{2k} + \frac{\norm{\widetilde{\mu}_s}^2_2}{T^2} \left[\sum^{T}_{j=1} \abs{\lambda_{(2)}(W)}^{j}\right]^2\\
&\overset{(c)}{\leq} \frac{1}{T^2} \sum^{N-1}_{i=1}\frac{\sigma^2_{(i),s}}{1 - \abs{\lambda_{(i+1)}\left(W\right)}^2} + \frac{\kappa(W)^2\norm{\widetilde{\mu}_s}^2_2}{T^2},\\
\end{split} 
\end{equation}
where $\widetilde{\mu}_s := \begin{bmatrix} 
\mu_{1, s} \cdots \mu_{N, s}
\end{bmatrix}^\top$ and $\widetilde{\Sigma}_s := \begin{bmatrix}
\sigma^2_{1,s} & &\\
 & \ddots & \\
 & & \sigma^2_{N,s}
\end{bmatrix}$. Here, the step (a) is because the eigenvalues of $W - \frac{1}{N} e_N e^\top_N$ are  $\lambda_{(2)}\left(W\right)$, $\ldots$, $\lambda_{(N)}\left(W\right)$ and $0$, and the Von Neumann's trace inequality implies that $$\text{tr}\left(\left(W - \frac{1}{N} e_N e^\top_N \right)^{2k} \widetilde{\Sigma}_s\right) \leq \sum^{N-1}_{i=1} \abs{\lambda_{(i+1)}\left(W\right)}^{2k} \cdot \sigma^2_{(i),s} \quad \text{for all } k = 1,\ldots, T,$$ 
and furthermore we use the following property of the symmetric matrix $W - \frac{1}{N} e_N e^\top_N$:
$$\text{tr}\left(\left(W - \frac{1}{N} e_N e^\top_N \right)^{i+j} \widetilde{\mu}_s \widetilde{\mu}_s^\top \right) = \widetilde{\mu}_s^\top  \left(W - \frac{1}{N} e_N e^\top_N \right)^{i+j} \widetilde{\mu}_s \leq  \abs{\lambda_{(2)}(W)}^{j+k} \norm{\widetilde{\mu}_s}^2_2.$$
The step (b) uses the identity that
$$\left[\sum^{T}_{j=1} \abs{\lambda_{(2)}(W)}^{j}\right]^2 = \sum^{T}_{j=1}\sum^{T}_{k=1} \abs{\lambda_{(2)}(W)}^{j+k}.$$
The step (c) is because
$$\sum^{T}_{k=1} \abs{\lambda_{(i+1)}\left(W\right)}^{2k} \leq \sum^{\infty}_{k=0} \abs{\lambda_{(i+1)}\left(W\right)}^{2k} = \frac{1}{1 - \abs{\lambda_{(i+1)}\left(W\right)}^2}$$
and
$$\left[\sum^{T}_{j=1} \abs{\lambda_{(2)}(W)}^{j}\right]^2 \leq \left[\sum^{\infty}_{j=0} \abs{\lambda_{(2)}(W)}^{j}\right]^2 \leq \left[\frac{1}{1 - \abs{\lambda_{(2)}(W)}}\right]^2 = \kappa(W)^2.$$
Finally, Combining Eq. \eqref{eq:average error} and \eqref{eq:consensus error}, we obtain that 
\begin{equation*}
\begin{split}
\mathbb{E}\left[\sum^N_{i=1}\norm{\theta_{i,T} - \bar{\mu}}^2_{2} \right] &= \sum^{n}_{s=1}\mathbb{E}\left[\norm{\widetilde{\Theta}_{T,s} - \bar{\mu}_s e_N}^2_{2} \right]\\
&= \sum^{n}_{s=1}\mathbb{E}\left[\norm{\widetilde{\Theta}_{T,s} - \bar{\Theta}_{T,s} e_N + \bar{\Theta}_{T,s} e_N - \bar{\mu}_s e_N}^2_{2} \right]\\
&\leq 2\sum^{n}_{s=1}\mathbb{E}\left[\norm{\widetilde{\Theta}_{T,s} - \bar{\Theta}_{T,s} e_N }^2_{2} \right] +  2\sum^{n}_{s=1}\mathbb{E}\left[\norm{ \bar{\Theta}_{T,s} e_N - \bar{\mu}_s e_N }^2_{2} \right]\\
&\leq \frac{2\kappa(W)^2\sum^N_{i=1}\norm{\mu_i}^2_2}{T^2} + \frac{2}{T^2} \sum^{N-1}_{i=1}\frac{\sum^n_{s=1}\sigma^2_{(i),s}}{1 - \abs{\lambda_{(i+1)}\left(W\right)}^2} + \frac{2\sum^N_{i=1}\sum^n_{s=1} \sigma^2_{i,s}}{NT}.
\end{split} 
\end{equation*}
\end{proof}
\section{Implementation Details}
\label{sec:implementation details}
We first introduce the Metropolis-Hastings (MH) weight matrix $W_{MH} = [W_{MH}(i, j)] \in \mathbb{R}^{N \times N}$ associated with a graph $\mathcal{G}$ \citep{xiao2007distributed}:
\begin{equation}
\label{eq:MH weight matrix}
\begin{split}
W_{MH}(i, j) = \begin{cases}
  \frac{1}{\max\{\vert \mathcal{N}_i \vert, \vert \mathcal{N}_j \vert\}+1}, & \text{if } j \in \mathcal{N}_i, \\
  1 - \sum_{j \in \mathcal{N}_i} \frac{1}{\max\{\vert \mathcal{N}_i \vert, \vert \mathcal{N}_j \vert\}+1}, & \text{if } i=j,\\
  0, & \text{otherwise}.
  \end{cases}
\end{split} 
\end{equation}
Note that, $W_{MH}$ satisfies
\begin{equation*}
\begin{split}
W_{MH} = W_{MH}^\top, \quad W_{MH}e_N = e_N, \quad \text{and } \Big \Vert W_{MH} - \frac{1}{N} e_N e_N^\top \Big \Vert_2 < 1.
\end{split} 
\end{equation*}
Therefore, it yields the asymptotic average consensus:
\begin{equation*}
\begin{split}
\lim_{t \rightarrow \infty} W^t_{MH} = \frac{1}{N} e_N e_N^\top. 
\end{split} 
\end{equation*}
We consider $W_{MH}$ as the default weight matrix associated with each graph in Figure \ref{fig:5 networks}, so every implemented algorithm either uses $W_{MH}$ or a transformation of $W_{MH}$.

\subsection{Details of experiments in Section \ref{sec:numerical results convergence behavior}}
Since \texttt{D-MASG} has more parameters to set than \texttt{SDA} and \texttt{DSG}, we first deal with the choice of parameters for \texttt{D-MASG} according to Corollary 18 in \citet{fallah2019robust}. The weight matrix $W_1$ used by \texttt{D-MASG} is a shifted version of $W_{MH}$ defined by $W_1:= \frac{1}{2} I_N + \frac{1}{2} W_{MH}$ (see Remark 9 in \citet{fallah2019robust} for details). Let $\lambda^{W_1}_N$ denote the smallest eigenvalue of $W_1$. Then the scaled condition number is defined as $\tilde{\kappa}:= 2/\lambda^{W_1}_N$. \texttt{D-MASG} consists of $K>0$ stages and the $k$-th stage length is denoted by $t_k$ for $k \in \{1,\ldots,K\}$. We set $t_k =2^k \Big \lceil 7 \sqrt{\tilde{\kappa}} \ln(2) \Big \rceil$ for $k \in \{2,\ldots,K\}$ and then set $t_1 = \sum^K_{k=2} t_k$. Thus, the number of iterations is $T=\sum^K_{k=1}t_k = 2t_1$.
The step-size for stage $1$ is $\alpha_1 = \lambda^{W_1}_N/2$ and for stage $k \in \{2,\ldots,K\}$ is $\alpha_k = \lambda^{W_1}_N/2^{2k+1}$. The momentum parameter for stage $k \in \{1,\ldots,K\}$ is $\beta_k = \frac{1-\sqrt{\alpha_k}}{1+\sqrt{\alpha_k}}$.

For $\texttt{SDA}$, we set the gossip matrix $L=I_N - W_{MH}$. The ``burn-in'' time $T_0$, the step-size $\eta$ and the momentum parameter $\zeta$ are chosen according to Theorem \ref{theorem:upper bound of stochastic dual accelerated method}. For \texttt{DSG}, we set the weight matrix $W = W_{MH}$. The step-size $\eta_t$ for $t \in \{0, 1, \ldots, T-1\}$ is chosen according to Theorem \ref{theorem:finite-time upper bound of Consensus-Based Stochastic Gradient Method}.

To simulate the distributed system, we set $N=100$ and $b = 10$. For each graph in Figure \ref{fig:5 networks}, we vary $K \in \{3, \ldots, 8\}$ to obtain different $T$. We repeat each algorithm $100$ times for every $T$ and graph, and report the average squared error as an approximation to the mean squared error.

\subsection{Details of experiments in Section \ref{sec:numerical results sample complexity}}
To simulate the distributed system, we set $N=100$ and $b = 1$. We consider the desired accuracy $\epsilon \in \{0.015, 0.0122, 0.01, 0.0082, 0.0067, 0.0055\}$ for the primary set of experiments (i.e. results in Figure \ref{fig:sample complexity large epsilon}), and $\epsilon \in \{0.00033, 0.00027, 0.00022, 0.00018, 0.00015, 0.00012\}$ for the complementary set of experiments (i.e. results in Figure \ref{fig:sample complexity smaller epsilon SDA}). Given an $\epsilon > 0$, we run each algorithm for $T \in \{1,2,\ldots\}$ iterations, and for each $T$, we repeat 100 times and calculate the average squared error. This process continues until the average squared error is less than or equal to $\epsilon$ for the first time. Then, we report the number of samples used by one node.

Both \texttt{SSTM\_sc} and \texttt{SDA} use the gossip matrix $L=I_N - W_{MH}$. For \texttt{SDA}, the parameters are chosen according to Theorem \ref{theorem:upper bound of stochastic dual accelerated method}. We set the parameters of \texttt{SSTM\_sc} according to Corollary 5.14 in \citet{gorbunov2019optimal} as follows. Let $L_{\psi} = \lambda_{1}(L)$, $\mu_{\psi} = \lambda_{N-1}(L)$, $\sigma^2_{\psi} = \lambda_1(L) N$ and $R_y = \norm{\mu - \bar{\mu}\cdot e_N}_2$, where $\mu = [\mu_1, \ldots, \mu_N]^\top$. We set the batch-size $r_t=\max\{1, \left(\mu_{\psi}/L_{\psi}\right)^{1.5} N^2 \sigma^2_{\psi} (1 + \sqrt{3\ln N})^2 /\epsilon\}$ for $t \in \{0, 1, \ldots, T-1\}$ and $r_T = \max\{1, \left(\mu_{\psi}/L_{\psi}\right)^{1.5} N \sigma^2_{\psi} (1 + \sqrt{3\ln N})^2 /\epsilon^2, \left(\mu_{\psi}/L_{\psi}\right)^{1.5} N^2 \sigma^2_{\psi} (1 + \sqrt{3\ln N})^2 R^2_y /\epsilon^2, \sigma^2_{\psi} (1 + \sqrt{3\ln N})^2 R^2_y /\epsilon^2\}$.

\subsection{Details of experiment in Section \ref{sec:numerical results non-asymptotic regime behavior}}
For this experiment, we set $b = 0$, which implies that $\mu_i = 0$ for all $i \in \mathcal{N}$. In this case, both \texttt{D-MASG} and \texttt{SDA} start from the optimal solution $\bar{\mu}=0$, and thus their deterministic errors remain zero all the time. Therefore, the mean squared errors of both methods are purely equal to their variances due to the noise of the samples. We consider star graph for this experiment, because the condition numbers of the gossip matrix $L = I_N - W_{MH}$ for \texttt{SDA} and the shifted matrix $W_1 = \frac{1}{2} I_N + \frac{1}{2} W_{MH}$ for \texttt{D-MASG} are approximately equal to $N$. So by choosing $N$ from $\{148, 190, 244, 314, 403, 518, 665\}$, we have a series of star graphs with ascending condition numbers. For each of those star graphs, we run \texttt{D-MASG} and \texttt{SDA} for $T = \sqrt{N}$ number of iterations, and repeat 100 times to report the average results. We set the parameters of $\texttt{SDA}$ according to Theorem \ref{theorem:upper bound of stochastic dual accelerated method}. For \texttt{D-MASG}, we set $K=1$, $t_1 = T$, $\alpha_1 = \lambda^{W_1}_N/2$ and $\beta_1 = \frac{1-\sqrt{\alpha_1}}{1+\sqrt{\alpha_1}}$.

\end{document}